\definecolor{mintgreen}{RGB}{152,255,152}
\definecolor{pinksalmon}{RGB}{255,102,102}
\definecolor{hueso}{RGB}{245,245,220}
\definecolor{marfil}{RGB}{255,253,208}
\definecolor{amarillo}{RGB}{255,255,0}
\numberwithin{equation}{section}
\newtheorem{theorem}{Theorem}[section]
\newtheorem{lemma}[theorem]{Lemma}
\newtheorem{proposition}[theorem]{Proposition}
\newtheorem{corollary}[theorem]{Corollary}
\newtheorem{conjecture}[theorem]{Conjecture}
\newtheorem{thmx}{Theorem}
\def\moverlay{\mathpalette\mov@rlay}
\def\mov@rlay#1#2{\leavevmode\vtop{%
   \baselineskip\z@skip \lineskiplimit-\maxdimen
   \ialign{\hfil$\m@th#1##$\hfil\cr#2\crcr}}}
\newcommand{\charfusion}[3][\mathord]{
    #1{\ifx#1\mathop\vphantom{#2}\fi
        \mathpalette\mov@rlay{#2\cr#3}
      }
    \ifx#1\mathop\expandafter\displaylimits\fi}
\newcommand{\bigcupdot}{\charfusion[\mathop]{\bigcup}{\cdot}}
\newcommand{\suchthat}{\;\ifnum\currentgrouptype=16 \middle\fi|\;}
\newcommand{\Z}{\mathbb{Z}}
\newcommand{\C}{\mathbb{C}}
\newcommand{\Q}{\mathbb{Q}}
\newcommand{\R}{\mathbb{R}}
\newcommand{\Gal}[1]{\operatorname{Gal}#1}
\newcommand{\op}[1]{\operatorname{#1}}
\newcommand{\CM}{\mathcal{CM}}
\theoremstyle{definition}
\newtheorem{remark}[theorem]{Remark}
\newtheorem{definition}[theorem]{Definition}
\newtheorem{example}[theorem]{Example}
\def\C{{\mathbb C}}
\def\R{{\mathbb R}}
\def\Z{{\mathbb Z}}
\def\Q{{\mathbb Q}}
\def\O_K{{\Cal{O}_{K}}}
\def\O_F{{\Cal{O}_{F}}}
\def\N_F{{\Cal{N}_{F/\Q}}}
\begin{document}

\title[On the Colmez conjecture for non-abelian CM fields]{On the Colmez conjecture for non-abelian CM fields}

\author{Adrian Barquero-Sanchez and Riad Masri}

\address{Department of Mathematics, Mailstop 3368, Texas A\&M University, College Station, TX 77843-3368 }

\email{adrianbs11@math.tamu.edu}
\email{masri@math.tamu.edu}


\begin{abstract}
The Colmez conjecture relates the Faltings height of an abelian variety with complex
multiplication by the ring of integers of a CM field $E$
to logarithmic derivatives of certain Artin $L$--functions at $s=0$. In this
paper, we prove that if $F$ is any fixed totally real number field of degree $[F:\Q] \geq 3$,
then there are infinitely many CM extensions $E/F$ such that
$E/\Q$ is \textit{non-abelian} and the Colmez conjecture is true for $E$. Moreover, these CM extensions are 
explicitly constructed to be ramified at ``arbitrary'' prescribed sets of prime ideals of $F$.

We also prove that the Colmez conjecture is true for a generic
class of non-abelian CM fields called Weyl CM fields, and use this 
to develop an arithmetic statistics approach to the Colmez conjecture 
based on counting CM fields of fixed degree and bounded discriminant. 

We illustrate these results 
by evaluating the Faltings height of the Jacobian 
of a genus 2 hyperelliptic curve with complex multiplication by a non-abelian quartic CM field in 
terms of the Barnes double Gamma function at algebraic arguments. 
This can be seen as an explicit non-abelian Chowla-Selberg formula.

A crucial input to the proofs is an averaged version of the Colmez conjecture which was
recently proved independently by Andreatta-Goren-Howard-Madapusi Pera and Yuan-Zhang.

\end{abstract}

\maketitle

\section{Introduction}

\subsection{The Chowla-Selberg formula and the Colmez conjecture}
One of the central objects of study in number theory is the \textit{Dedekind eta function}, which is the
weight $1/2$ modular form for $SL_2(\Z)$ defined by the infinite product
\begin{align*}
\eta(z):=q^{1/24}\prod_{n=1}^{\infty}(1-q^n), \quad q:=e^{2\pi i z}.
\end{align*}
A remarkable formula of Chowla and Selberg \cite{CS67}
relates values of $\eta(z)$ at CM points to values of the Euler Gamma function $\Gamma(s)$ at rational numbers. Here
we briefly recall this formula (see e.g. \cite{Wei76}).
Let $E$ be an imaginary quadratic field of discriminant $-D < 0$. Let $h(-D)$ be the class number,
$w(-D)$ be the number of units,
and $\chi_{-D}$ be the Kronecker symbol. Using Kronecker's first limit formula, one can prove the identity
\begin{align}\label{preCS}
\sum_{C} \log \left(\sqrt{\textrm{Im}(\tau_C)}|\eta(\tau_C)|^2\right)
= \frac{h(-D)}{2}\left(\log\left(\frac{\sqrt{D}}{2}\right) - \log(2\pi)
+ \frac{L^{\prime}(\chi_{-D},0)}{L(\chi_{-D},0)} \right),
\end{align}
where the sum is over a complete set of CM points $\tau_C$ of discriminant $-D$ on $SL_2(\mathbb Z) \backslash \mathbb H$.
There are $h(-D)$ such points, corresponding to the ideal classes $C$ of $E$. On the other hand, a
classical identity of Lerch \cite{Ler87} evaluates the logarithmic derivative of the Dirichlet $L$--function
$L(\chi_{-D},s)$ at $s=0$ in terms of values of $\Gamma(s)$ at rational
numbers,
\begin{align}\label{lerch}
\frac{L^{\prime}(\chi_{-D},0)}{L(\chi_{-D},0)} = -\log(D) +
\frac{w(-D)}{2h(-D)}\sum_{k=1}^{D}\chi_{-D}(k)\log\left(\Gamma\left(\frac{k}{D}\right)\right).
\end{align}
Substituting Lerch's identity (\ref{lerch}) into (\ref{preCS}) then yields the \textit{Chowla-Selberg formula}
\begin{align}\label{cs}
\prod_{C}\sqrt{\textrm{Im}(\tau_{C})}|\eta(\tau_{C})|^2
=\left(\frac{1}{4\pi\sqrt{D}}\right)^{\frac{h(-D)}{2}}\prod_{k=1}^{D}
\Gamma\left(\frac{k}{D}\right)^{\frac{w(-D)\chi_{-D}(k)}{4}}.
\end{align}

There is a beautiful geometric reformulation of the Chowla-Selberg formula (\ref{cs}) as an identity which relates
the Faltings height of a CM elliptic curve to the logarithmic derivative of $L(\chi_{-D},s)$ at $s=0$. In order 
to describe this, we first recall the definition of the (stable) Faltings height of a CM abelian variety.
Let $F$ be a totally real number field of degree $n$. Let $E/F$ be a CM extension of $F$ and $\Phi$
be a CM type for $E$. Let $X_{\Phi}$ be an abelian variety
defined over $\overline{\mathbb Q}$ with complex multiplication by $\mathcal{O}_E$ and
CM type $\Phi$. We call $X_{\Phi}$ a CM abelian variety of type $(\mathcal{O}_E,\Phi)$.
Let $K \subseteq \overline{\mathbb Q}$ be a number field over which $X_{\Phi}$ has everywhere good reduction,
and choose a N\'eron differential $\omega \in H^{0}(X_{\Phi}, \Omega^n_{X_{\Phi}})$. Then the \textit{Faltings height} of $X_{\Phi}$ is defined by
\begin{align*}
h_{\textrm{Fal}}(X_{\Phi}):= -\frac{1}{2[K:\mathbb Q]}\sum_{\sigma : K \hookrightarrow \mathbb{C}}
\log\left| \int_{X_{\Phi}^{\sigma}(\mathbb{C})} \omega^{\sigma} \wedge \overline{\omega^{\sigma}}\right|.
\end{align*}
The Faltings height does not depend on the choice of $K$ or $\omega$.

Now, if $E=\Q(\sqrt{-D})$ is an imaginary quadratic field and
$X_{\Phi}$ is a CM elliptic curve of type $(\mathcal{O}_E,\Phi)$, then one can prove that (see e.g. \cite{Gro80, Sil86})
\begin{align*}
h_{\textrm{Fal}}(X_{\Phi})=-\log(2^{3/2}\pi) -\frac{1}{h(-D)}\sum_{C}\log\left(\sqrt{\textrm{Im}(\tau_C)}|\eta(\tau_C)|^2\right).
\end{align*}
Combining this identity with (\ref{preCS}) allows one to express the Chowla-Selberg formula in the equivalent form
\begin{align}\label{grossid}
h_{\textrm{Fal}}(X_{\Phi}) = -\frac{1}{2}\frac{L^{\prime}(\chi_{-D},0)}{L(\chi_{-D},0)}
-\frac{1}{4}\log\left(D\right) - \frac{1}{2}\log(2\pi).
\end{align}

Colmez \cite{Col93} gave a vast conjectural generalization of the identity 
(\ref{grossid}) which relates the Faltings height of \textit{any} CM abelian variety
$X_{\Phi}$ of type $(\mathcal{O}_E, \Phi)$ to logarithmic derivatives
at $s=0$ of certain Artin $L$--functions constructed from the CM pair $(E,\Phi)$. See Section \ref{colmezstatement} for
the precise statement of the Colmez conjecture.

\subsection{Previous work on the Colmez conjecture} There have been many remarkable works on the Colmez conjecture. 

Colmez \cite{Col93} proved his conjecture when $E/\Q$ is abelian, up to addition of
a rational multiple of $\log(2)$ which was recently shown to equal zero by Obus \cite{Obu13}.

Yang \cite{Yan10a, Yan10b, Ya13} proved the Colmez conjecture for a
large class of non-biquadratic CM fields of degree $[E:\Q]=4$, thus establishing
the only known cases of the Colmez conjecture when $E/\Q$ is \textit{non-abelian}.

In his paper, Colmez \cite{Col93} also stated an \textit{averaged} version of his conjecture,
where the Faltings heights are averaged over the different CM types for the given CM field $E$. 
See Section \ref{AverageColmezConjecture} for the statement of
the average Colmez conjecture. Very recently, Andreatta-Goren-Howard-Madapusi Pera \cite{AGHM15} and Yuan-Zhang \cite{YZ15}
independently proved the average Colmez conjecture. Interest in the average Colmez conjecture
is motivated in part by work of Tsimerman \cite{Tsi15}, who used it
to prove the Andr\'e-Oort conjecture for the moduli space $\mathcal{A}_{g}$
of principally polarized abelian varieties of dimension $g$. The average Colmez conjecture will also play a crucial
role in the proofs of the results in this paper (see e.g. Section \ref{Outline}). 

\subsection{Statement of the main results} 

As discussed, the only known cases of the Colmez conjecture for non-abelian CM fields are due to Yang for a large class of 
CM fields of degree 4. In our first main result, we will prove that if $F$ is any fixed totally real number field of degree $n \geq 3$,
then there are infinitely many CM extensions $E/F$ such that
$E/\Q$ is \textit{non-abelian} and the Colmez conjecture is true for $E$. 

More precisely, let $p$ be a prime number which 
splits in the Galois closure $F^s$ and let $\frak{p}$ be a prime ideal of $F$ lying above $p$. We will prove that
if we fix an ``arbitrary'' finite set  $\mathcal{R}$ of prime ideals of $F$,
then we can explicitly construct infinitely many CM extensions $E/F$ which are ramified only at the primes in the prescribed 
set $\mathcal{R} \cup \{\frak{p}\}$ 
and at exactly one more prime ideal of $F$ (which is different for each of the extensions $E/F$)
such that $E/\Q$ is non-Galois and the Colmez conjecture is true for $E$. Similarly, we can prescribe finite sets $\mathcal{U}_1$
(resp. $\mathcal{U}_2$) of prime ideals of $F$ that will be split (resp. remain inert) in the extensions $E/F$.

\begin{thmx}\label{maintheorem} Let $F$ be a totally real number field of degree $n \geq 3$. Let $p \in \Z$ be a
prime number which splits in the Galois closure $F^s$ and let $\mathfrak{p}$ be a prime ideal of
$F$ lying above $p$. Let $d_{F^s}$ be the discriminant of $F^s$ and $\mathcal{R}$ be a finite set of prime ideals of $F$ not dividing
$pd_{F^s}$.  Let $\mathcal{U}_1$ and $\mathcal{U}_2$ be finite sets of prime ideals of $F$
not dividing $2p d_{F^{s}}$ such that $\mathcal{R}$, $\mathcal{U}_1$ and $\mathcal{U}_2$ are pairwise disjoint.
Then there is a set $\mathcal{S}_{\mathcal{R}, \frak{p}}$ of prime ideals of $F$ which is disjoint from
$\mathcal{R} \cup \mathcal{U}_1 \cup \mathcal{U}_2 \cup \{\frak{p}\}$ such that
the following statements are true.
\begin{itemize}
\item[(i)] $\mathcal{S}_{\mathcal{R}, \frak{p}}$ has positive natural density.\footnote{The \textit{natural density}
of a set $\mathcal{S}$ of prime ideals of a number field $L$ is defined by
\begin{align*}
d(\mathcal{S}):=\lim_{X \rightarrow \infty}\frac{\# \{ \frak{q} \in \mathcal{S} \suchthat N_{L/\Q}(\frak{q}) \leq X \}}
{\# \{\frak{q} \subset \mathcal{O}_L \suchthat \textrm{$\frak{q}$ is a prime ideal with $N_{L/\Q}(\frak{q}) \leq X$}\}},
\end{align*}
provided the limit exists.}
\item[(ii)] For each prime ideal $\frak{q} \in \mathcal{S}_{\mathcal{R}, \frak{p}}$, there is an element
$\Delta_{\frak{q}} \in \mathcal{O}_F$ with prime factorization
\begin{align*}
\Delta_{\frak{q}}\mathcal{O}_F=\frak{p} \frak{q}\prod_{\frak{r} \in \mathcal{R}}\frak{r}.
\end{align*}
\item[(iii)] The field $E_{\frak{q}}:=F(\sqrt{\Delta_{\frak{q}}})$ is a
CM extension of $F$ which is non-Galois over $\Q$ and is ramified only at the prime ideals of
$F$ dividing $\Delta_{\frak{q}}$.
Moreover, each prime ideal in $\mathcal{U}_1$ splits in $E_{\mathfrak{q}}$ and each prime ideal in $\mathcal{U}_2$ remains inert in $E_{\mathfrak{q}}$.
\item[(iv)] The Colmez conjecture is true for $E_{\frak{q}}$.
\end{itemize}
\end{thmx}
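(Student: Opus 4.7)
\medskip
\noindent\emph{Overall strategy.} My plan is to reduce the Colmez conjecture for $E_\mathfrak{q}$ to the \emph{averaged} Colmez conjecture of Andreatta-Goren-Howard-Madapusi Pera and Yuan-Zhang. The reduction succeeds whenever $\operatorname{Gal}(\overline{\mathbb{Q}}/\mathbb{Q})$ acts transitively on the set of CM types of $E_\mathfrak{q}$: Galois-conjugate CM abelian varieties share the same Faltings height, and the Artin $L$-functions on the right-hand side of Colmez's identity are themselves Galois-equivariant, so under transitivity each side of the individual conjecture is constant over CM types and the averaged identity immediately forces the individual identity for every type. The construction of $E_\mathfrak{q}$ must therefore accomplish two things simultaneously: produce the prescribed ramification and splitting behavior, and arrange that $\operatorname{Gal}(E_\mathfrak{q}^s/F^s) \cong (\mathbb{Z}/2\mathbb{Z})^n$, so that $(\mathbb{Z}/2\mathbb{Z})^n$ already acts simply transitively on the $2^n$ CM types.

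\medskip
\noindent\emph{Class field theoretic construction.} Fix a modulus $\mathfrak{m}$ of $F$ supported on $\mathcal{U}_1\cup\mathcal{U}_2$ and all real places, of sufficient depth at each $\mathfrak{u}\in\mathcal{U}_1\cup\mathcal{U}_2$ to detect square classes there. In the ray class group $\operatorname{Cl}^+_\mathfrak{m}(F)$, the following conditions on a prime ideal $\mathfrak{q}\nmid\mathfrak{m}$ each cut out a coset: (i) $\mathfrak{p}\mathfrak{q}\prod_{\mathfrak{r}\in\mathcal{R}}\mathfrak{r}$ is narrowly principal, so admits a totally negative generator $\Delta_\mathfrak{q}$; (ii) for each $\mathfrak{u}\in\mathcal{U}_1$ the image of $\Delta_\mathfrak{q}$ is a square modulo $\mathfrak{u}$, so $\mathfrak{u}$ splits in $E_\mathfrak{q}:=F(\sqrt{\Delta_\mathfrak{q}})$; (iii) for each $\mathfrak{u}\in\mathcal{U}_2$ it is a non-square, so $\mathfrak{u}$ stays inert. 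The coprimality of $\mathcal{R},\mathcal{U}_1,\mathcal{U}_2,\{\mathfrak{p}\}$ and the hypothesis that they avoid $d_{F^s}$ ensure these conditions are compatible and each imposes an independent Frobenius constraint. Chebotarev applied to the corresponding ray class field then gives a set $\mathcal{S}_{\mathcal{R},\mathfrak{p}}$ of positive natural density on which all conditions hold. Since $\Delta_\mathfrak{q}$ is totally negative, $E_\mathfrak{q}$ is CM over $F$.

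\medskip
\noindent\emph{Galois structure via $\mathfrak{p}$.} Let $\sigma_1,\ldots,\sigma_n$ be the $F\hookrightarrow F^s$ embeddings and write $\Delta_i:=\sigma_i(\Delta_\mathfrak{q})$, so $E_\mathfrak{q}^s=F^s(\sqrt{\Delta_1},\ldots,\sqrt{\Delta_n})$. Because $p$ splits completely in $F^s$, the primes $\mathfrak{p}^{\sigma_i}$ of $F$ lie under $n$ distinct primes $\mathfrak{P}_i$ of $F^s$, and $v_{\mathfrak{P}_i}(\Delta_i)=1$, whereas $\mathfrak{P}_i$ is coprime to $\Delta_j$ for $j\neq i$ (the remaining prime divisors of $\Delta_j$ lie above rational primes other than $p$, or above $\mathfrak{p}^{\sigma_j}\neq\mathfrak{p}^{\sigma_i}$). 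Therefore $F^s(\sqrt{\Delta_i})/F^s$ is ramified at $\mathfrak{P}_i$ but unramified at $\mathfrak{P}_j$ for $j\neq i$. For any nonempty $I\subseteq\{1,\ldots,n\}$, the extension $F^s(\sqrt{\prod_{i\in I}\Delta_i})/F^s$ is then ramified at the nonempty set $\{\mathfrak{P}_i : i\in I\}$, so no such product is a square in $F^s$. Hence the $n$ quadratic extensions are linearly disjoint over $F^s$, giving $\operatorname{Gal}(E_\mathfrak{q}^s/F^s)\cong(\mathbb{Z}/2\mathbb{Z})^n$. The same ramification comparison shows $E_\mathfrak{q}^{\sigma_i}\neq E_\mathfrak{q}$ for nontrivial $\sigma_i$, so $E_\mathfrak{q}/\mathbb{Q}$ is non-Galois. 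Combined with the strategy, the Colmez conjecture holds for $E_\mathfrak{q}$.

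\medskip
\noindent\emph{Main obstacle.} The most delicate step is the simultaneous class field theoretic realization: assembling narrow principality of $\mathfrak{p}\mathfrak{q}\prod\mathfrak{r}$, the totally negative sign of $\Delta_\mathfrak{q}$, and the quadratic residue conditions at $\mathcal{U}_1\cup\mathcal{U}_2$ into Frobenius conditions in a single ray class field of $F$, and checking that the prescribed coset is genuinely nonempty (so that Chebotarev yields positive density rather than density zero). The coprimality hypotheses in the statement are what make these conditions independent and the density computation clean; without them one would have to wrestle with wild ramification at $\mathfrak{p}$ and with interactions between the narrow Hilbert class field and the genus-theoretic square-class conditions.
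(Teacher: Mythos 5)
Your overall strategy is exactly the paper's: reduce to the average Colmez conjecture of Andreatta--Goren--Howard--Madapusi Pera and Yuan--Zhang by showing that $G^{\CM}$ acts transitively on the $2^n$ CM types of $E_{\mathfrak{q}}$, and force transitivity by choosing the ramification of $E_{\mathfrak{q}}/F$ at a prime $\mathfrak{p}$ that splits completely in $F^s$. Your Galois-structure paragraph is essentially the paper's Lemmas \ref{coprime}--\ref{Degree} rolled into one: you show $\operatorname{Gal}(E_{\mathfrak{q}}^s/F^s)\cong(\Z/2\Z)^n$ via linear disjointness of the $F^s(\sqrt{\Delta_i})$ (using ramification at the primes $\mathfrak{P}_i$), and observe directly that this subgroup acts simply transitively on CM types. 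That is a modest shortcut past the paper's detour through the reflex degree $[E_\Phi:\Q]=2^n$, and it is correct.

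There is, however, a genuine gap in your class-field-theoretic construction, and it is precisely the point the paper's Proposition \ref{InfiniteCMFields} spends the most effort on. You take a modulus $\mathfrak{m}$ supported only on $\mathcal{U}_1\cup\mathcal{U}_2$ and the real places. This leaves the primes of $F$ above $2$ completely uncontrolled. If $\mathfrak{P}\mid 2$ is a prime of $F$ not in $\mathcal{R}\cup\{\mathfrak{p},\mathfrak{q}\}$ (and by hypothesis no such prime lies in $\mathcal{U}_1\cup\mathcal{U}_2$), then $\mathfrak{P}\nmid\Delta_{\mathfrak{q}}$, yet $F(\sqrt{\Delta_{\mathfrak{q}}})/F$ can still be (wildly) ramified at $\mathfrak{P}$. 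For odd primes coprime to $\Delta_{\mathfrak{q}}$ unramifiedness is automatic, but at dyadic primes it is not, so without congruence conditions on $\Delta_{\mathfrak{q}}$ at the primes above $2$ you cannot conclude part (iii) of the theorem, namely that $E_{\mathfrak{q}}$ is ramified \emph{only} at the primes dividing $\Delta_{\mathfrak{q}}$. The paper handles this by putting every prime above $2$ (outside the prescribed ramification set) into the modulus -- these are the sets $\mathcal{T}_1,\mathcal{T}_2$ -- choosing the exponent $e$ large enough that $\alpha\overset{\times}{\equiv}1\pmod{\mathfrak{P}^e}$ forces $\alpha$ to be a square in $F_\mathfrak{P}$ (Lemma \ref{square}), and using an approximation-theorem element $a$ together with the ray-class condition to guarantee $\Delta_{\mathfrak{q}}=ab_{\mathfrak{q}}$ lands in the right local square class at all such primes. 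Your ``main obstacle'' paragraph points at wild ramification \emph{at $\mathfrak{p}$}, which is a red herring (that prime is supposed to ramify); the real issue is wild ramification at dyadic primes away from the prescribed set, and it is fixed by enlarging $\mathfrak{m}$. With that enlargement -- and a short verification, as in the paper, that the resulting ray class is nonempty (for instance by constructing the auxiliary element $a$ via weak approximation and then letting $\mathfrak{q}$ range over the primes in the class of $a\mathfrak{p}^{-1}\prod\mathfrak{r}^{-1}$) -- your argument closes up and matches the paper's.
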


\begin{remark}\label{cmremark}
We emphasize that Theorem \ref{maintheorem} is \textit{effective} in the sense that we give an algorithm to construct the set
$\mathcal{S}_{\mathcal{R}, \frak{p}}$ and the associated
CM fields $E_{\frak{q}}$ for $\frak{q} \in \mathcal{S}_{\mathcal{R}, \frak{p}}$. See Section \ref{reflexsection}, and in particular,
Section \ref{algorithm}, Algorithm 1.
\end{remark}

\begin{remark} The set of prime numbers $p \in \Z$ which split in the Galois closure $F^s$ has natural density $1/[F^s:\Q]$.
\end{remark}

In our second main result, we will prove that the Colmez conjecture is true for a generic class of non-abelian
CM fields called Weyl CM fields  (see e.g. \cite{CO12}). As remarked by Oort \cite[p. 5]{Oor12},
``most CM fields are Weyl CM fields''. There are (at least) two different ways in which ``most'' can be understood. In the context of Oort's remark, ``most'' 
refers to density results for isogeny classes of abelian varieties over finite fields. In Section \ref{probability} we 
will give an alternative point of view based on counting CM fields of fixed degree and bounded discriminant, and 
use this to develop a probabilistic approach to the Colmez conjecture.

To define the notion of a Weyl CM field, let $E=\Q(\alpha)$ be a CM field of degree $2g$. Let $m_{\alpha}(X)$
be the minimal polynomial of $\alpha$ and denote its roots by $\alpha_1=\alpha, \overline{\alpha_1}, \ldots, \alpha_g, \overline{\alpha_g}$.
Let $a_{2\ell -1}:=\alpha_{\ell}$ and $a_{2 \ell} := \overline{\alpha_{\ell}}$ for $\ell = 1, \ldots, g$. Then $E^{s}=\Q(a_1, \ldots, a_{2g})$
is the Galois closure of $E$. Let $S_{2g}$ be the symmetric group on the letters $\{a_1, \ldots, a_{2g}\}$ and
$W_{2g}$ be the subgroup of $S_{2g}$ consisting of permutations which map any pair of the form
$\{a_{2j-1}, a_{2j}\}$ to a pair $\{a_{2k-1}, a_{2k}\}$.
The group $W_{2g}$ is called the \textit{Weyl group}.  The Weyl group has order $\# W_{2g} = 2^g g!$ and
fits in the exact sequence
\begin{align*}
1 \longrightarrow (\Z/2\Z)^{g} \longrightarrow W_{2g} \longrightarrow S_{g} \longrightarrow 1.
\end{align*}

Now, it can be shown that the Galois group $\mathrm{Gal}(E^s/\Q)$ is isomorphic to a subgroup of $W_{2g}$. If $E$ is a CM field
such that $\mathrm{Gal}(E^s/\Q) \cong W_{2g}$, then $E$ is called a \textit{Weyl CM field}. Thus, for a CM field to be 
Weyl is analogous to the classical fact that the splitting field of a generic polynomial in $\Q[X]$
of degree $g$ has Galois group isomorphic to $S_g$ (see e.g. \cite{Gal73}).


\begin{thmx}\label{weyl} If $E$ is a Weyl CM field, then the Colmez conjecture is true for $E$.
\end{thmx}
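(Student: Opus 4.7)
The strategy is to exploit the averaged Colmez conjecture recently proved by Andreatta-Goren-Howard-Madapusi Pera and Yuan-Zhang, together with the observation that for a Weyl CM field the Galois group acts transitively on the set of CM types, so that the average degenerates to a single value and the individual conjecture is forced.

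The first step is to analyze the action of $G := \mathrm{Gal}(E^s/\Q) \cong W_{2g}$ on the set $\mathcal{T}_E$ of CM types of $E$. Identify $\mathrm{Hom}(E,\overline{\Q})$ with the roots $\{a_1,\ldots,a_{2g}\}$, paired by complex conjugation as $\{a_{2\ell-1}, a_{2\ell}\}$ for $\ell=1,\ldots,g$. A CM type is then a choice of one element from each pair, giving a natural bijection $\mathcal{T}_E \leftrightarrow \{0,1\}^g$. Under this identification, the normal subgroup $(\Z/2\Z)^g \trianglelefteq W_{2g}$ of within-pair swaps acts on $\{0,1\}^g$ by coordinate flips, and this action is manifestly transitive; in particular $W_{2g}$ acts transitively on $\mathcal{T}_E$.

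The second step is to verify that both sides of Colmez's conjecture for $(E,\Phi)$ are invariant under the $G$-action on CM types. On the Faltings-height side this is essentially built into the definition: $h_{\mathrm{Fal}}(X_{\Phi})$ is an average over all embeddings of a field of definition into $\C$, so Galois-conjugate CM abelian varieties share the same height. On the Artin $L$-function side, the characters appearing in Colmez's formula are constructed from the reflex field and the CM type, and their isomorphism classes (and hence the associated $L$-functions) depend only on the $G$-orbit of the pair $(E,\Phi)$.

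Combining these observations, both sides of the Colmez conjecture are constant as $\Phi$ ranges over $\mathcal{T}_E$. Writing $\mathrm{RHS}_{\Phi}$ for the conjectured expression, the averaged identity
\[
\frac{1}{|\mathcal{T}_E|}\sum_{\Phi \in \mathcal{T}_E} h_{\mathrm{Fal}}(X_{\Phi}) \;=\; \frac{1}{|\mathcal{T}_E|}\sum_{\Phi \in \mathcal{T}_E} \mathrm{RHS}_{\Phi}
\]
therefore collapses to $h_{\mathrm{Fal}}(X_{\Phi}) = \mathrm{RHS}_{\Phi}$ for every CM type $\Phi$, which is the Colmez conjecture for $E$.

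The hard part is the careful bookkeeping in the second step: one must unwind Colmez's explicit formula and check that the Artin representations attached to $(E,\Phi)$ are genuinely $G$-equivariant. The transitivity in the first step is purely combinatorial and the reduction in the third step is formal once the invariance is established.
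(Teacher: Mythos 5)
Your proposal follows the same overall strategy as the paper: reduce to the averaged Colmez conjecture (Andreatta--Goren--Howard--Madapusi Pera and Yuan--Zhang) by showing that $\Gal(E^s/\Q)$ acts transitively on the set of CM types and that both sides of Colmez's formula depend only on the orbit. Where you differ is in how you establish these two ingredients, and in one case your route is actually simpler. For transitivity, you observe directly that the normal subgroup $(\Z/2\Z)^g \trianglelefteq W_{2g}$ of within-pair swaps acts (simply) transitively on $\{0,1\}^g \cong \Phi(E)$; the paper instead proves $[E_\Phi:\Q]=2^g$ by computing the stabilizer $\op{Stab}_{\Gal(E^s/\Q)}(\Phi)$ explicitly and constructing a bijection with a copy of $S_g$, then invokes its Corollary~\ref{orbit2} to translate back to transitivity. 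Your argument buys a cleaner, more conceptual proof of the orbit count, though the paper's stabilizer computation has the side benefit of exhibiting the reflex degree explicitly. For the invariance of the Faltings height under equivalence of CM types, your route via Galois conjugation of $X_\Phi$ (noting that $X_\Phi^\tau$ is a CM abelian variety of type $(\mathcal{O}_E,\tau\Phi)$ and that $h_{\mathrm{Fal}}$ is conjugation-invariant by construction) is more elementary than the paper's, which goes through Colmez's Th\'eoreme 0.3 and a Haar-measure computation. The piece you explicitly flag but do not carry out --- that the Artin/Lerch side of Colmez's formula depends only on the equivalence class of $\Phi$ --- is the paper's Lemma~\ref{AzeroLemma}, proved by showing $A_{\Phi_1}^0 = A_{\Phi_2}^0$ via a direct calculation using left-invariance of Haar measure on $G^{\CM}$. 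Filling that in would complete your argument; as it stands you have correctly identified the nontrivial technical input without supplying it. One small caution: you work with $G = \Gal(E^s/\Q)$, whereas the conjecture and averaging are phrased over $G^{\CM}$; since $E^s$ is a CM field inside $\Q^{\CM}$, the two orbits coincide (the paper records this in the proof of Corollary~\ref{orbit2}), but the identification deserves a sentence.
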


\begin{remark}\label{weylremark} If $E$ is a CM field with $[E:\Q] = 4$, the only possibilities for 
$\Gal(E^s/\Q)$ are $\Z/2\Z \times \Z/2\Z$, $\Z/4\Z$ or $D_4$. Therefore, since $D_4 \cong W_4$, every non-abelian quartic CM field $E$ is Weyl.
It then follows from Theorem \ref{weyl} and the work of Colmez \cite{Col93} and Obus \cite{Obu13} that the Colmez conjecture 
is true for \textit{every} quartic CM field. 
\end{remark}

\begin{remark}\label{Weylremark} We emphasize that if
$g \geq 2$ and $E$ is a Weyl CM field of degree $2g$, then $E/\Q$ is non-Galois since
$\# \mathrm{Gal}(E^s/\Q)=2^g g! > 2g=[E:\Q]$. In particular, any Weyl CM field of degree $2g \geq 4$ is non-abelian.
\end{remark}

\begin{remark} In Section \ref{reflexsection} (see e.g. Remark \ref{notweyl}),
we will prove that the CM fields $E_{\frak{q}}$ which appear
in Theorem \ref{maintheorem} are Weyl CM fields if and only if $[F^s:\Q]=n!$. In particular, if $[F^s:\Q] < n!$, then the fields
$E_{\frak{q}}$ are not Weyl CM fields, so that Theorems \ref{maintheorem} and \ref{weyl} can be viewed as complementary to one another.
\end{remark}

\subsection{Explicit non-abelian Chowla-Selberg formulas} One important feature of the precise form of the Colmez conjecture
for the CM fields appearing in Theorem A or Theorem B is that it allows us to give explicit evaluations of Faltings heights of CM abelian varieties.

Recall that for imaginary quadratic fields,
the Colmez conjecture is a geometric reformulation of the Chowla-Selberg formula which evaluates
the Faltings height of a CM elliptic curve in terms of values of $\Gamma(s)$ at rational numbers. More precisely,
if $E=\Q(\sqrt{-D})$ and $X_{\Phi}$ is a CM elliptic curve of type $(\mathcal{O}_E, \Phi)$, then substituting (\ref{lerch}) into (\ref{grossid}) yields
\begin{align}\label{abelianCS}
h_{\textrm{Fal}}(X_{\Phi})= -\frac{w(-D)}{4h(-D)}\sum_{k=1}^{D}\chi_{-D}(k)\log \left( \Gamma\left(\frac{k}{D}\right)  \right)
+ \frac{1}{4}\log\left(D\right) - \frac{1}{2}\log(2\pi).
\end{align}

Now, if $E$ is a CM field as in Theorem A or Theorem B, and $X_{\Phi}$ is a CM abelian variety of type $(\mathcal{O}_E, \Phi)$, then
the Colmez conjecture takes the form (see Proposition \ref{cc})
\begin{align}\label{niceform}
h_{\textrm{Fal}}(X_{\Phi})=
-\frac{1}{2}\frac{L^{\prime}(\chi_{E/F},0)}{L(\chi_{E/F},0)}
-\frac{1}{4}\log\left(\frac{|d_E|}{d_F}\right) - \frac{n}{2}\log(2\pi),
\end{align}
where $L(\chi_{E/F},s)$ is the (incomplete) $L$--function of the Hecke character $\chi_{E/F}$ associated to the quadratic extension $E/F$ and
$d_E$ (resp. $d_F$) is the discriminant of $E$ (resp. $F$). In fact, we will develop a probabilistic framework 
which predicts that the Colmez conjecture takes this form ``most'' of the time (see Section \ref{probability}). One reason for 
interest in this form of the Colmez conjecture is the appearance of the $L$--function $L(\chi_{E/F},s)$, which allows us
to give explicit ``non-abelian Chowla-Selberg formulas'' analogous to (\ref{abelianCS}) which
evaluate the Faltings heights of CM abelian varieties in terms of values of the Barnes multiple Gamma function at algebraic numbers in $F$.
We will study this problem extensively in the forthcoming papers \cite{BS-M16a, BS-M16b}. Here we give an example of such an evaluation for
the Faltings height of the Jacobian of a genus 2 hyperelliptic curve with complex multiplication by a non-abelian quartic CM field.

\begin{example} Let $E=\Q(\sqrt{-5 - 2\sqrt{2}})$. Then $E$ is a non-abelian quartic CM field of discriminant $d_E=1088$
with real quadratic subfield $F = \Q(\sqrt{2})$ of discriminant $d_F=8$. Moreover, by Remark \ref{weylremark} the CM field $E$ is Weyl, hence
the Colmez conjecture is true for $E$.

Now, by \cite[Theorem 1.1 and Table 2b]{BS15} with the choice $[D, A, B] = [8, 10, 17]$, the Jacobian $J_C$ of the 
genus 2 hyperelliptic curve $C$ over $\Q(\sqrt{17})$ given by the equation
\small
\begin{align}\label{hyper}
y^2 & = x^{6} + (3 + \sqrt{17}) x^5 + \left(\frac{25 + 3 \sqrt{17}}{2}\right) x^4  
+ (3 + 5\sqrt{17})x^3 + \left(\frac{73 - 9\sqrt{17}}{2}\right) x^2 + (-24 + 8\sqrt{17}) x + 10 -  2\sqrt{17}
\end{align}
\normalsize
is a CM abelian surface defined over $\overline{\Q}$ with complex multiplication by the ring of integers 
$\mathcal{O}_E$ of $E$.

\begin{figure}[H]\label{figure}
\begin{tikzpicture}[xscale=0.77,yscale=0.77/2]
\def\r{sqrt(17)}
\draw[pinksalmon, very thick, domain=-3.25 : -1.194497, samples = 100] plot (\x, {sqrt( (\x)^6 + (3 + \r)*(\x)^5 + ((25 + 3*\r)/2)*(\x)^4 + (3 +5*\r)*(\x)^3 + ((73 - 9*\r)/2)*(\x)^2 + (-24 + 8*\r)*\x + 10 - 2*\r)} );
\draw[pinksalmon, very thick, domain=-3.25 : -1.194497, samples = 100] plot (\x, -{sqrt( (\x)^6 + (3 + \r)*(\x)^5 + ((25 + 3*\r)/2)*(\x)^4 + (3 +5*\r)*(\x)^3 + ((73 - 9*\r)/2)*(\x)^2 + (-24 + 8*\r)*\x + 10 - 2*\r)} );
\draw[pinksalmon, very thick, domain=-0.367056 : 0.7, samples = 100] plot (\x, {sqrt( (\x)^6 + (3 + \r)*(\x)^5 + ((25 + 3*\r)/2)*(\x)^4 + (3 +5*\r)*(\x)^3 + ((73 - 9*\r)/2)*(\x)^2 + (-24 + 8*\r)*\x + 10 - 2*\r)} );
\draw[pinksalmon, very thick, domain=-0.367056 : 0.7, samples = 100] plot (\x, -{sqrt( (\x)^6 + (3 + \r)*(\x)^5 + ((25 + 3*\r)/2)*(\x)^4 + (3 +5*\r)*(\x)^3 + ((73 - 9*\r)/2)*(\x)^2 + (-24 + 8*\r)*\x + 10 - 2*\r)} );
\foreach \x in {-3, ..., -1, 1, 2, 3}
    \draw (\x, 0.2) -- (\x, -0.2) node[below] {\x};
\foreach \y in {-5, ..., -1, 1, 2, 3, 4, 5}
    \draw (0.1, \y) -- (-0.1, \y) node[left] {\y};
\draw[thick, ->] (-4,0) -- (4,0) node[right] {$x$};
\draw[thick, ->] (0,-6) -- (0,6) node[right] {$y$};
\end{tikzpicture}
\caption{The hyperelliptic curve $C$.}
\end{figure}
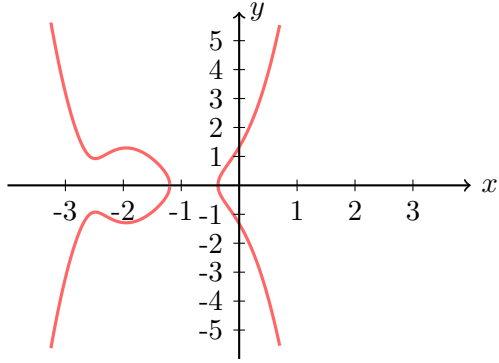

Since the Colmez conjecture is true for $E$, it follows from (\ref{niceform}) that
\begin{align}\label{niceform2}
h_{\textrm{Fal}}(J_C)=
-\frac{1}{2}\frac{L^{\prime}(\chi_{E/F},0)}{L(\chi_{E/F},0)}
-\frac{1}{4}\log(136) - \log(2\pi).
\end{align}
Hence, to complete the evaluation of $h_{\textrm{Fal}}(J_C)$, we need a two-dimensional analog of Lerch's identity
for the logarithmic derivative of $L(\chi_{E/F}, s)$ at $s=0$. For this we require the Barnes double Gamma function (see e.g. \cite{Bar01, Shi77}). 


Let $\omega=(\omega_1, \omega_2) \in \R_{+}^2$ and $z \in \C$. Then the
\textit{Barnes double Gamma function} is defined by 
\begin{align*}
\Gamma_{2}(z,\omega):=F(z,\omega)^{-1},
\end{align*}
where
\small
\begin{align*}
F(z,\omega):= & ~ z\exp\left(\gamma_{22}(\omega)z+\frac{z^2}{2}\gamma_{21}(\omega)\right) 
\prod_{(m,n)}\left(1 + \frac{z}{m\omega_1 + n\omega_2}\right)
\exp\left(-\frac{z}{m\omega_1 + n\omega_2} + \frac{z^2}{2(m\omega_1 + n\omega_2)^2} \right),
\end{align*}
\normalsize
the product being over all pairs of integers $(m,n) \in \Z^{2}_{\geq 0}$ with $(m,n) \neq (0,0)$. The function $F(z,\omega)$ is 
entire, and the constants
$\gamma_{22}(\omega), \gamma_{21}(\omega)$ are explicit ``higher'' analogs of Euler's constant $\gamma$.

Given an element $\alpha \in F$, let $\langle \alpha \rangle = \alpha \mathcal{O}_F$ and 
$\alpha^{\sigma}$ be the image of $\alpha$ under an automorphism $\sigma \in \Gal(F/\Q)$. We also let 
$\alpha^{\prime}$ denote the image of $\alpha$ under the nontrivial automorphism in $\Gal(F/\Q)$. 

Let $\frak{D}_{E/F}$ be the relative discriminant, $h_E$ be the class number of $E$, and  
$\varepsilon > 1$ be the generator of the group $\mathcal{O}_F^{\times, +}$ of 
totally positive units of $F$. Let $B_2(t)=t^2-t+1/6$ be the second Bernoulli polynomial. 

In \cite{BS-M16a}, we use work of Shintani \cite{Shi77} to establish the following two-dimensional analog of Lerch's identity (\ref{lerch}),
\begin{align}\label{lerchrealquad}
\frac{L^{\prime}(\chi_{E/F},0)}{L(\chi_{E/F},0)} & =  -\log(N_{F/\Q}(\frak{D}_{E/F}))\\
& \quad + \frac{[\mathcal{O}_E^{\times}: \mathcal{O}_F^{\times}]}{2h_E}\sum_{z \in \mathcal{R}(\varepsilon, \mathfrak{D}_{E/F}^{-1})} 
\chi_{E/F}\left(\mathfrak{D}_{E/F}\langle z \rangle\right)
\log\left(\prod_{\sigma \in \Gal(F/\Q)} \Gamma_{2} \big(z^{\sigma}, (1, \varepsilon^{\sigma})\big)\right) \notag\\
& \quad  + \frac{\varepsilon - \varepsilon^{\prime}}{2}\log(\varepsilon^{\prime})\frac{[\mathcal{O}_E^{\times}: \mathcal{O}_F^{\times}]}{2h_E}
\sum_{\substack{z \in \mathcal{R}(\varepsilon, \mathfrak{D}_{E/F}^{-1})\\ z = x + y \varepsilon}}\chi_{E/F}\left(\mathfrak{D}_{E/F}\langle z \rangle\right)B_2(x),\notag 
\end{align}
where $\mathcal{R}(\varepsilon, \mathfrak{D}_{E/F}^{-1})$ is a finite subset of $\mathfrak{D}_{E/F}^{-1}$ defined by 
\begin{align*}
\mathcal{R}(\varepsilon, \mathfrak{D}_{E/F}^{-1}) := \left \{ z = x + y\varepsilon \in \mathfrak{D}_{E/F}^{-1}
\suchthat x, y \in \Q, ~ 0 < x \leq 1, ~ 0 \leq y < 1, ~ \mathfrak{D}_{E/F} \langle z \rangle \text{ coprime to } \mathfrak{D}_{E/F} \right \}.
\end{align*}

Here we have $\frak{D}_{E/F}=\langle -5 -2\sqrt{2} \rangle$ and $\varepsilon =3 + 2\sqrt{2}$. We wrote a program in 
\texttt{SageMath} to compute the Shintani set $\mathcal{R}(\varepsilon, \mathfrak{D}_{E/F}^{-1})$. This set can be visualized geometrically in $\R_{+}^{2}$
via the embedding $\alpha \longmapsto (\alpha, \alpha^{\prime})$ as a finite subset of the Shintani cone 
\begin{align*}
C(\varepsilon):=\left\{t_1 (1,1) + t_2 \left(\varepsilon, \varepsilon^{\prime}\right) \suchthat  t_1 > 0,~ t_2 \geq 0 \right\} \subset \mathbb R_+^2
\end{align*}
generated by the vectors $(1, 1)$ and $(\varepsilon, \varepsilon^{\prime})$, as shown in the following figure.\footnote{The shaded parallelogram 
in Figure 2 is the subset of the Shintani cone $\mathcal{C}(\varepsilon)$ determined by the inequalities $0 < t_1 \leq 1$ and $0 \leq t_2 < 1$, which 
correspond to the inequalities appearing in the definition of $\mathcal{R}(\varepsilon, \mathfrak{D}_{E/F}^{-1})$.} 

\small
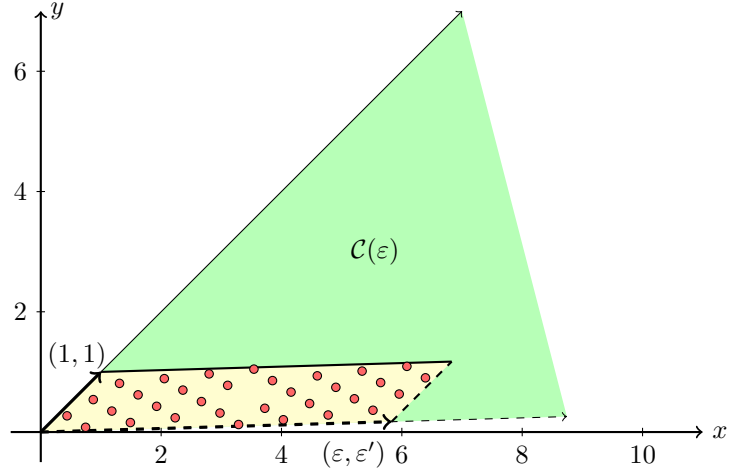
\begin{figure}[h]\label{figure1}
\begin{tikzpicture}[scale = 0.8]
\fill[fill=mintgreen, fill opacity = 0.7] (0,0) -- (7,7) -- (8.74264068711928, 0.257359312880715) -- cycle;
\fill[fill=marfil] (0,0) -- (1,1) -- (6.82842712474619, 1.171572875253810) -- (5.82842712474619, 0.171572875253810) -- cycle;
\draw[thick, ->] (-0.5,0) -- (11,0) node[right] {$x$};
\draw[thick, ->] (0,-0.5) -- (0,7) node[right] {$y$};
\draw (5, 3) node[right] {$\mathcal{C}(\varepsilon)$};
\foreach \x in {2,4,...,10}
\draw (\x,2pt) -- (\x,-2pt) node[below] {\x};
\foreach \y in {2,4,...,6}
\draw (2pt, \y) -- (-2pt, \y) node[left] {\y};
\draw[->] (0, 0) -- (7,7);
\draw[->, dashed] (0, 0) -- (5.82842712474619, 0.171572875253810) -- (8.74264068711928, 0.257359312880715);
\draw[very thick, ->] (0, 0) -- (1,1); 
\draw[very thick, ->, dashed] (0, 0) -- (5.82842712474619, 0.171572875253810); 
\draw (0.6, 0.9) node[above] {$(1, 1)$};
\draw (5.2, 0) node[below] {$(\varepsilon, \varepsilon^{\prime})$};
\draw[thick] (1, 1) -- (6.82842712474619, 1.171572875253810);
\draw[thick, dashed] (5.82842712474619, 0.171572875253810) -- (6.82842712474619, 1.171572875253810);
\filldraw[fill = pinksalmon, fill opacity = 1] (6.39229691519483, 0.901820731863992) circle (2pt);
\filldraw[fill = pinksalmon, fill opacity = 1] (5.95616670564347, 0.632068588474174) circle (2pt);
\filldraw[fill = pinksalmon, fill opacity = 1] (6.08390628654076, 1.09256430169454) circle (2pt);
\filldraw[fill = pinksalmon, fill opacity = 1] (5.33938544833532, 1.01355572813527) circle (2pt);
\filldraw[fill = pinksalmon, fill opacity = 1] (4.90325523878396, 0.743803584745448) circle (2pt);
\filldraw[fill = pinksalmon, fill opacity = 1] (4.15873440057853, 0.664795011186176) circle (2pt);
\filldraw[fill = pinksalmon, fill opacity = 1] (3.28647398147581, 0.125290724406541) circle (2pt);
\filldraw[fill = pinksalmon, fill opacity = 1] (3.85034377192445, 0.855538581016723) circle (2pt);
\filldraw[fill = pinksalmon, fill opacity = 1] (2.97808335282174, 0.316034294237087) circle (2pt);
\filldraw[fill = pinksalmon, fill opacity = 1] (3.54195314327038, 1.04628215084727) circle (2pt);
\filldraw[fill = pinksalmon, fill opacity = 1] (2.66969272416766, 0.506777864067633) circle (2pt);
\filldraw[fill = pinksalmon, fill opacity = 1] (1.92517188596223, 0.427769290508361) circle (2pt);
\filldraw[fill = pinksalmon, fill opacity = 1] (1.48904167641087, 0.158017147118543) circle (2pt);
\filldraw[fill = pinksalmon, fill opacity = 1] (0.744520838205434, 0.0790085735592717) circle (2pt);
\filldraw[fill = pinksalmon, fill opacity = 1] (0.872260419102717, 0.539504286779636) circle (2pt);
\filldraw[fill = pinksalmon, fill opacity = 1] (0.436130209551359, 0.269752143389818) circle (2pt);

\filldraw[fill = pinksalmon, fill opacity = 1] (5.52003649609211, 0.362316445084356) circle (2pt);
\filldraw[fill = pinksalmon, fill opacity = 1] (5.64777607698940, 0.822812158304720) circle (2pt);
\filldraw[fill = pinksalmon, fill opacity = 1] (5.21164586743804, 0.553060014914902) circle (2pt);
\filldraw[fill = pinksalmon, fill opacity = 1] (4.77551565788668, 0.283307871525084) circle (2pt);
\filldraw[fill = pinksalmon, fill opacity = 1] (4.46712502923261, 0.474051441355630) circle (2pt);
\filldraw[fill = pinksalmon, fill opacity = 1] (4.03099481968125, 0.204299297965812) circle (2pt);
\filldraw[fill = pinksalmon, fill opacity = 1] (4.59486461012989, 0.934547154575994) circle (2pt);
\filldraw[fill = pinksalmon, fill opacity = 1] (3.72260419102717, 0.395042867796358) circle (2pt);
\filldraw[fill = pinksalmon, fill opacity = 1] (3.10582293371902, 0.776530007457451) circle (2pt);
\filldraw[fill = pinksalmon, fill opacity = 1] (2.23356251461630, 0.237025720677815) circle (2pt);
\filldraw[fill = pinksalmon, fill opacity = 1] (2.79743230506494, 0.967273577287997) circle (2pt);
\filldraw[fill = pinksalmon, fill opacity = 1] (2.36130209551359, 0.697521433898179) circle (2pt);
\filldraw[fill = pinksalmon, fill opacity = 1] (2.05291146685951, 0.888265003728726) circle (2pt);
\filldraw[fill = pinksalmon, fill opacity = 1] (1.61678125730815, 0.618512860338908) circle (2pt);
\filldraw[fill = pinksalmon, fill opacity = 1] (1.18065104775679, 0.348760716949090) circle (2pt);
\filldraw[fill = pinksalmon, fill opacity = 1] (1.30839062865408, 0.809256430169454) circle (2pt);

\end{tikzpicture}
\caption{The embedding of $\mathcal{R}(\varepsilon, \mathfrak{D}_{E/F}^{-1})$ into $C(\varepsilon)$.}
\end{figure}
\normalsize

In order to give a uniform description of the points in $\mathcal{R}(\varepsilon, \mathfrak{D}_{E/F}^{-1})$, it is convenient to express them 
in terms of a $\Z$-basis for $\mathfrak{D}_{E/F}^{-1}$. In particular, for the $\Z$-basis given by  
\begin{align*}
\mathfrak{D}_{E/F}^{-1} = \Z \cdot 1 + \Z \cdot \left(\frac{6 + \sqrt{2}}{17}\right),
\end{align*} 
we find that   
\begin{align*}
\mathcal{R}(\varepsilon, \mathfrak{D}_{E/F}^{-1})=\left\{z_{m,n}:=-m + (4m + n -1)\left(\tfrac{6 + \sqrt{2}}{17}\right) \suchthat  0 \leq m \leq 8, ~ n \in S(m)\right\},
\end{align*}
where 
\begin{align*}
S(m):=
\begin{cases}
\{ 2, 3, 4 \} & \text{if $m = 0$}\\
\{1,  2, 3, 4 \} & \text{if $m = 1, 2, 3$}\\
\{ 1, 3 \} & \text{if $m = 4$}\\
\{ 0, 1, 2, 3 \} & \text{if $m = 5, 6, 7$}\\
\{ 0, 1, 2 \} & \text{if $m = 8$}.\\
\end{cases}
\end{align*}

We also wrote a program in \texttt{SageMath} to compute the character values
\begin{align*} 
c_{m,n}:=\chi_{E/F}(\mathfrak{D}_{E/F}\langle z_{m,n} \rangle) \in \{\pm 1\},
\end{align*}
which are given in the following table. 
\begin{table}[H]\label{table1}
{\renewcommand{\arraystretch}{1.2}
\begin{tabular}[h]{|c||c|c|c|c|c|c|c|c|c|} \hline
\multicolumn{10}{|c|}{Values of $c_{m,n}$} \\ \hline
\diagbox[width = 1.35cm, height = 0.7cm]{$n$}{$m$} & $0$ & $1$ & $2$ & $3$ & $4$ & $5$ & $6$ & $7$ & $8$ \\ \hline
$0$ & $$ & $$ & $$ & $$ & $$ & $-1$ & $1$ & $1$ & $1$ \\ \hline
$1$ & $$ & $-1$ & $-1$ & $1$ & $-1$ & $1$ & $1$ & $1$ & $-1$ \\ \hline
$2$ & $-1$ & $1$ & $-1$ & $-1$ & $$ & $-1$ & $-1$ & $1$ & $-1$ \\ \hline
$3$ & $-1$ & $1$ & $1$ & $1$ & $-1$ & $1$ & $-1$ & $-1$ & $$ \\ \hline
$4$ & $1$ & $1$ & $1$ & $-1$ & $$ & $$ & $$ & $$ & $$ \\ \hline
\end{tabular}
}
\caption{The character values $c_{m,n}:=\chi_{E/F}(\mathfrak{D}_{E/F}\langle z_{m,n} \rangle)$.}
\end{table}

Since $[\mathcal{O}_E^{\times}: \mathcal{O}_F^{\times}]=1$ and $h_E=1$, the preceding calculations yield the following explicit 
version of (\ref{lerchrealquad}), 
\begin{align}\label{lerchrealquad2}
\frac{L^{\prime}(\chi_{E/F},0)}{L(\chi_{E/F},0)} & =  -\log(17) + \frac{1}{2} \sum_{\substack{0 \leq m \leq 8 \\ n \in S(m)}} c_{m,n}
\log\left(\prod_{\sigma \in \Gal(F/\Q)} \Gamma_{2} \big(z_{m,n}^{\sigma}, (1, \varepsilon^{\sigma})\big)\right) - \frac{4\sqrt{2}}{17}\log(\varepsilon).
\end{align}

Finally, by combining (\ref{niceform2}) and (\ref{lerchrealquad2}) we get an  
explicit evaluation of $h_{\textrm{Fal}}(J_C)$ which is summarized in the following theorem.  

\begin{theorem} Let $C$ be the genus 2 hyperelliptic curve over $\Q(\sqrt{17})$ defined by (\ref{hyper}).
The Jacobian $J_C$ is a CM abelian surface defined over $\overline{\Q}$ with complex 
multiplication by the ring of integers $\mathcal{O}_E$ of the non-abelian quartic CM field $E=\Q(\sqrt{-5-2\sqrt{2}})$ 
with real quadratic subfield $F=\Q(\sqrt{2})$. The Faltings height of $J_C$ is given by 
\begin{align*} 
h_{\mathrm{Fal}}(J_C) & =  -\frac{1}{4}\sum_{\substack{0 \leq m \leq 8 \\ n \in S(m)}}c_{m,n}
\log\left(\prod_{\sigma \in \Gal(F/\Q)}\Gamma_{2} \big(z_{m,n}^{\sigma}, (1, \varepsilon^{\sigma})\big)\right) + \frac{2\sqrt{2}}{17}\log(\varepsilon)
+ \frac{1}{4}\log\left(\frac{17}{8}\right) -\log(2\pi),
\end{align*}
where $z_{m, n} = -m + (4m+n-1)(\frac{6 + \sqrt{2}}{17})$, $\varepsilon = 3 + 2\sqrt{2}$, and the numbers $c_{m,n} \in \{\pm 1\}$ are given in Table 1. 
\end{theorem}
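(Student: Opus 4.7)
The plan is to assemble the stated evaluation of $h_{\mathrm{Fal}}(J_C)$ by combining three ingredients already in place: the Colmez conjecture for $E = \Q(\sqrt{-5-2\sqrt{2}})$, the specific form (\ref{niceform2}) that the conjecture takes for the CM fields covered by Theorem B, and the two-dimensional Lerch-type identity (\ref{lerchrealquad}) from \cite{BS-M16a}.

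First I would invoke \cite[Theorem 1.1 and Table 2b]{BS15} with $[D,A,B] = [8,10,17]$ to identify $J_C$ as a CM abelian surface defined over $\overline{\Q}$ with complex multiplication by $\mathcal{O}_E$. Since $E$ is a non-abelian quartic CM field, Remark \ref{weylremark} shows that $E$ is Weyl, so Theorem B yields the Colmez conjecture for $E$. Together with Proposition \ref{cc}, this gives the intermediate identity (\ref{niceform2}) with $|d_E|/d_F = 1088/8 = 136$ and $n = [F:\Q] = 2$, namely $h_{\mathrm{Fal}}(J_C) = -\tfrac{1}{2}\tfrac{L^{\prime}(\chi_{E/F},0)}{L(\chi_{E/F},0)} - \tfrac{1}{4}\log(136) - \log(2\pi)$.

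Next I would apply (\ref{lerchrealquad}) to the pair $(E,F)$. Direct computation (carried out in \texttt{SageMath}) verifies that $\mathfrak{D}_{E/F} = \langle -5-2\sqrt{2}\rangle$ has norm $17$, that $[\mathcal{O}_E^{\times}:\mathcal{O}_F^{\times}] = 1$ and $h_E = 1$, and that $\varepsilon = 3 + 2\sqrt{2}$, so the overall prefactor $\tfrac{[\mathcal{O}_E^{\times}:\mathcal{O}_F^{\times}]}{2h_E}$ in (\ref{lerchrealquad}) equals $\tfrac{1}{2}$; the Shintani representatives and the Hecke character values are precisely the $z_{m,n}$ and $c_{m,n}$ displayed in Figure 2 and Table 1. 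After using $\log\varepsilon^{\prime} = -\log\varepsilon$ and $\varepsilon - \varepsilon^{\prime} = 4\sqrt{2}$, the Bernoulli-polynomial sum on the last line of (\ref{lerchrealquad}) collapses to the single constant $-\tfrac{4\sqrt{2}}{17}\log\varepsilon$ appearing in (\ref{lerchrealquad2}).

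Finally, substituting (\ref{lerchrealquad2}) into the form of Colmez above and collecting terms — the $\log 17$ contributions combine via $\tfrac{1}{2}\log 17 - \tfrac{1}{4}\log 136 = \tfrac{1}{4}\log(17/8)$, the unit term yields $\tfrac{2\sqrt{2}}{17}\log\varepsilon$, the Barnes double Gamma sum acquires the factor $-\tfrac{1}{4}$, and $-\tfrac{n}{2}\log(2\pi) = -\log(2\pi)$ — produces the claimed formula. The main obstacle is purely computational rather than conceptual: one must carefully enumerate a fundamental domain for the action of $\langle\varepsilon\rangle$ on $\mathfrak{D}_{E/F}^{-1}$ coprime-to-$\mathfrak{D}_{E/F}$ representatives, and verify that the Hecke character evaluations recorded in Table 1, together with the Bernoulli contributions, aggregate to exactly the single term $-\tfrac{4\sqrt{2}}{17}\log\varepsilon$. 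All the deep inputs (Theorem B, Proposition \ref{cc}, and the identity (\ref{lerchrealquad})) are in hand, so once these finite data are verified the theorem follows by substitution.
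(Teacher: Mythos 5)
Your proposal is correct and follows essentially the same path as the paper: identify $J_C$ via the Bouyer--Streng data, note $E$ is a Weyl (hence non-abelian quartic) CM field so Theorem B applies, obtain (\ref{niceform2}), and then substitute the Shintani--Lerch evaluation (\ref{lerchrealquad})--(\ref{lerchrealquad2}) and simplify. The arithmetic checkpoints you list ($\tfrac{1}{2}\log 17 - \tfrac{1}{4}\log 136 = \tfrac{1}{4}\log(17/8)$, $\varepsilon\varepsilon' = 1$ giving $\log\varepsilon' = -\log\varepsilon$, the $\tfrac{1}{2}$ prefactor, and the Bernoulli sum evaluating to $4/17$) are exactly the finite verifications the paper performs.
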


\end{example}

\subsection{An arithmetic statistics approach to the Colmez conjecture}\label{probability}

In this section we develop an approach to the Colmez conjecture based on the study of certain problems of 
arithmetic distribution. 

\subsubsection{The density of Weyl CM fields when ordered by discriminant}

A natural way to count number fields $K/\Q$ which satisfy some property is to order them
by the absolute value of their discriminant $d_K$. 
Here we are interested in the problem of counting number fields 
(and in particular CM fields) with a given Galois group. This problem has a long history and has been studied extensively 
by many authors in recent years. See for example the excellent survey articles \cite{CDO06, Woo16}.

We start by introducing some notation. If $K/\Q$ is a number field, we denote its isomorphism class by $[K/\Q]$. 
For a permutation group $G$ on $n$ letters, we define the counting function
\begin{align*}
N_n(G, X):= \#\{ [K/\Q] \suchthat [K:\Q] = n, \, \Gal(K^s/\Q) \cong G \, \text{and } |d_K| \leq X \},
\end{align*}
which counts the number of isomorphism classes of number fields $K/\Q$ of degree $[K:\Q] = n$ such that
the Galois group of the Galois closure $K^s$ is $\Gal(K^s/\Q) \cong G$ and such that $|d_K| \leq X$. 

Similarly, in order to count isomorphism classes of number fields with a specific signature $(r_1, r_2)$, where $n = r_1 + 2r_2$,
we define the counting function
\begin{align*}
N_{r_1, r_2}(G, X):= \#\{ [K/\Q] \suchthat [K:\Q] = n, \, \op{signature}(K) = (r_1, r_2), \, \Gal(K^s/\Q) \cong G \, \text{and } |d_K| \leq X \}.
\end{align*}

Now, for CM fields we define the counting functions
\begin{align*}
\op{CM}_{n}(G, X):= \#\{ [E/\Q] \suchthat \text{$E$ is a CM field}, \, [E:\Q] = n, \, \Gal(E^s/\Q) \cong G \, \text{and } |d_E| \leq X \}
\end{align*}
and
\begin{align*}
\op{CM}_{n}(X):= \#\{ [E/\Q] \suchthat \text{$E$ is a CM field}, \, [E:\Q] = n \ \text{and} \ |d_E|  \leq X \}.
\end{align*}

We want to study the density of Weyl CM fields of fixed degree $2n$ when ordered by discriminant, i.e., we want to study the limit
\begin{align*}
\rho_{\text{Weyl}}(2n):=\lim_{X \to \infty} \frac{\op{CM}_{2n}(W_{2n}, X)}{\op{CM}_{2n}(X)}, 
\end{align*}
provided the limit exists. Conjectures of Malle \cite{Mal02, Mal04} and various refinements (see e.g. \cite{Bha07, Woo16}) 
concerning asymptotics for the counting functions $N_n(G,X)$ and $N_{r_1, r_2}(G,X)$ suggest that this limit exists and is positive. This is  
of great interest, for if $\rho_{\text{Weyl}}(2n) > 0$ then Theorem \ref{weyl} implies that the Colmez conjecture is true for 
a positive proportion of CM fields of fixed degree $2n$ when ordered by discriminant.

When $n = 1$, a CM field of degree $2$ is just an imaginary quadratic field.
In this case the Weyl group $W_{2} \cong \Z/2\Z$, so trivially every quadratic CM field is Weyl and hence
\begin{align*}
\rho_{\text{Weyl}}(2)=\lim_{X \to \infty} \frac{\op{CM}_{2}(W_{2}, X)}{\op{CM}_{2}(X)} = 1.
\end{align*}

When $n = 2$, the situation is already much more complicated. The following 
table can be extracted from \cite[p. 376]{Coh03}, and strongly suggests that $\rho_{\textrm{Weyl}}(4)$ exists and equals 1. 

\begin{table}[H]
\begin{tabular}{|c||c|c|c|}
\hline
$X$ & $\op{CM}_{4}(W_4, X)$ & $\op{CM}_{4}(X)$ & $\frac{\op{CM}_{4}(W_{4}, X)}{\op{CM}_{4}(X)}$ \\
\hline
$10^4$ & $27$ & $72$ & $37.5\%$ \\
\hline
$10^5$ & $395$ & $613$ & $64.4\%$ \\
\hline
$10^6$ & $4512$ & $5384$ & $83.8\%$ \\
\hline
$10^7$ & $47708$ & $51220$ & $93.1\%$ \\
\hline
$10^8$ & $486531$ & $500189$ & $97.3\%$ \\
\hline
$10^9$ & $4904276$ & $4956208$ & $98.9\%$ \\
\hline
$10^{10}$ & $49190647$ & $49384381$ & $99.6\%$ \\
\hline
$10^{12}$ & $4926673909$ & $4929271179$ & $99.9\%$ \\
\hline
\end{tabular}
\caption{Density of quartic Weyl CM fields}
\end{table}
\vspace{-0.05in}

In fact, we will appeal to the works of
Baily \cite{Bai80}, M\"aki \cite{Mak85}, and Cohen, Diaz y Diaz and Olivier \cite{CDO02, CDO05, CDO06} to deduce
the following result. 

\begin{theorem}\label{rhodensity} 
The density of quartic Weyl CM fields is
\begin{align*}
\rho_{\mathrm{Weyl}}(4)=\lim_{X \to \infty} \frac{\op{CM}_{4}(W_{4}, X)}{\op{CM}_{4}(X)} = 1.
\end{align*}
\end{theorem}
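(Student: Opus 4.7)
The plan is to use the decomposition by Galois group together with the asymptotic counts of quartic number fields already available in the literature. By Remark \ref{weylremark}, a quartic CM field $E$ is Weyl exactly when $\Gal(E^s/\Q) \cong D_4 \cong W_4$, and the only other possibilities for $\Gal(E^s/\Q)$ are the cyclic group $\Z/4\Z$ and the Klein four group $\Z/2\Z \times \Z/2\Z$. (Note that $S_4$ and $A_4$ quartic fields have no quadratic subfield, so they cannot be CM.) Hence we have the exact decomposition
\begin{equation*}
\op{CM}_4(X) = \op{CM}_4(D_4, X) + \op{CM}_4(\Z/4\Z, X) + \op{CM}_4(\Z/2\Z \times \Z/2\Z, X),
\end{equation*}
and it suffices to show that the last two terms are $o(\op{CM}_4(D_4,X))$ as $X \to \infty$.

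The first step is to produce a lower bound (in fact an asymptotic) $\op{CM}_4(D_4, X) \gg X$. For this I would invoke the theorem of Cohen, Diaz y Diaz, and Olivier (\cite{CDO02, CDO05, CDO06}) which gives an asymptotic of the form $N_4(D_4, X) \sim c X$, together with the refined version that separates by signature and by the type of the unique quadratic subfield. A $D_4$-quartic field $K$ has a unique quadratic subfield $k$, and $K$ is CM if and only if $K$ is totally imaginary and $k$ is real quadratic (so that $K/k$ is a totally imaginary quadratic extension of a totally real field). This is a positive-density condition inside the family of $D_4$-quartic fields, so the Cohen--Diaz y Diaz--Olivier counts yield $\op{CM}_4(D_4, X) \sim c_1 X$ for an explicit constant $c_1 > 0$.

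The second step is to bound the non-Weyl contributions. For cyclic quartic CM fields, I would invoke Mäki's theorem \cite{Mak85} on cyclic quartic fields ordered by discriminant, which gives a count of order $X^{1/2}$ (up to logarithmic factors); restricting to the CM signature only improves the bound, so $\op{CM}_4(\Z/4\Z, X) = O(X^{1/2}\log X)$. For biquadratic CM fields, a biquadratic CM field has the form $\Q(\sqrt{-a}, \sqrt{-b})$ with real quadratic subfield $\Q(\sqrt{ab})$. Using Baily's parametrization of biquadratic fields \cite{Bai80} (or, equivalently, counting them directly as triples of quadratic subfields subject to a CM constraint and to the discriminant relation $d_E = d_{k_1} d_{k_2} d_{k_3}/\gcd$-terms), one obtains $\op{CM}_4(\Z/2\Z \times \Z/2\Z, X) = O(X^{1/2}(\log X)^2)$.

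Combining the three estimates,
\begin{equation*}
\frac{\op{CM}_4(W_4, X)}{\op{CM}_4(X)} = \frac{\op{CM}_4(D_4, X)}{\op{CM}_4(D_4,X) + O(X^{1/2}(\log X)^2)} \longrightarrow 1
\end{equation*}
as $X \to \infty$, which is the claim. The main technical obstacle is ensuring that the cited counts of $D_4$-quartic fields can be cut down to the CM ones while still keeping the leading asymptotic of order $X$; this amounts to checking that among $D_4$-quartic fields of a given discriminant bound, a positive proportion have totally imaginary signature and real quadratic resolvent, which follows from the local-conditions refinement of the Cohen--Diaz y Diaz--Olivier asymptotic. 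The bookkeeping for the biquadratic case (keeping track of inclusion-exclusion over the three quadratic subfields while isolating the CM ones) is the other place where care is required, but only an upper bound of the correct order of magnitude is needed, not a precise constant.
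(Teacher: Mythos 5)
Your proposal is correct and follows essentially the same route as the paper: decompose $\op{CM}_4(X)$ by the Galois group of the closure ($C_4$, $V_4$, or $D_4 \cong W_4$), invoke the Cohen--Diaz y Diaz--Olivier, M\"aki, and Baily asymptotics to see that the $D_4$ count is of order $X$ while the $C_4$ and $V_4$ counts are $O(X^{1/2}\log^2 X)$, and conclude the ratio tends to $1$. The paper makes the same observation you do about needing the signature- and resolvent-refined $D_4$ count (their $N_{0,2}^{+}(D_4,X)$ from \cite{CDO02}); your remaining concerns about that refinement and the biquadratic bookkeeping are precisely handled by the cited results, so there is no gap.
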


\begin{remark}
It follows from Theorem \ref{weyl} and Theorem \ref{rhodensity} that the Colmez conjecture is true for 100\% of 
quartic CM fields. On the other hand, we have already observed in Remark \ref{weylremark} that the Colmez conjecture is  
true for \textit{every} quartic CM field. Nonetheless, Theorem \ref{rhodensity} supports our belief that the probabilistic approach described here 
can be used to prove (at least in low degree) that the Colmez conjecture is true for a positive proportion of CM fields of fixed degree. 
We are currently investigating this problem for sextic CM fields. 
\end{remark}

\subsubsection{Abelian varieties over finite fields and density results} We now explain how 
to use density results for isogeny classes of abelian varieties over finite fields to prove probabilistic results about 
the Colmez conjecture. 

Let $\mathbb{F}_q$ be a finite field with $q=p^n$ elements. Let $\alpha_A$ be a root of the characteristic polynomial $f_A$
of the Frobenius endomorphism $\pi_A$ of an abelian variety $A/\mathbb{F}_{q}$ of dimension $g$. It is known that if
$A/\mathbb{F}_q$ and $B/\mathbb{F}_q$ are isogenous abelian varieties, then $f_A=f_B$.

Let $\mathcal{A}_g(q)$ be the set of isogeny classes of abelian varieties $A/\mathbb{F}_{q}$ of dimension $g$.
Let $K_{f_A}=\Q(\alpha_A)^s$ be the splitting field of $f_A$ and $\Gal(K_{f_A}/\Q)$ be the Galois group.
Kowalski \cite{Kow06} proved that the proportion of isogeny classes $[A] \in \mathcal{A}_g(p^n)$ which satisfy 
$\Gal(K_{f_A}/\Q) \cong W_{2g}$ approaches 1 as $n \rightarrow \infty$. We will show
that if $\Gal(K_{f_A}/\Q) \cong W_{2g}$ and
$g \geq 2$, then $\Q(\alpha_A)$ is a non-Galois Weyl CM field of degree $2g \geq 4$. By combining these results with
Theorem \ref{weyl}, we will establish the following probabilistic result.

\begin{theorem}\label{density} Suppose that $g \geq 2$. Then
\begin{align*}
\lim_{n \rightarrow \infty} \frac{\# \{[A] \in \mathcal{A}_g(p^n) \suchthat
\textrm{$\Q(\alpha_A)$ is a non-Galois CM field which satisfies the Colmez conjecture}\}}{\# \mathcal{A}_g(p^n)} = 1.
\end{align*}
\end{theorem}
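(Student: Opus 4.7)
The plan is to derive Theorem \ref{density} by combining Kowalski's density theorem for Frobenius Galois groups with Theorem \ref{weyl}. The key intermediate step, flagged in the discussion preceding the statement, is to show that if $\Gal(K_{f_A}/\Q) \cong W_{2g}$ and $g \geq 2$, then $\Q(\alpha_A)$ is automatically a non-Galois Weyl CM field of degree $2g$. Once this is established, Theorem \ref{weyl} yields the Colmez conjecture for $\Q(\alpha_A)$, and Kowalski's theorem delivers the density $1$ on the left-hand side.

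To establish the intermediate step, I would start from the basic structure of Weil $q$-numbers. The roots of $f_A$ come in pairs $\{\alpha_i, q/\alpha_i\}_{i=1}^{g}$ under complex conjugation, since $\alpha_i \overline{\alpha_i} = q$. Because $q \in \Q$ is fixed by every $\sigma \in \Gal(K_{f_A}/\Q)$, we have $\sigma(q/\alpha_i) = q/\sigma(\alpha_i)$, so $\sigma$ preserves this pairing. Hence the natural embedding $\Gal(K_{f_A}/\Q) \hookrightarrow S_{2g}$ factors through $W_{2g}$. Under the hypothesis that $\Gal(K_{f_A}/\Q) \cong W_{2g}$, the action on the $2g$ roots is transitive (indeed $W_{2g}$ surjects onto $S_g$ and each $\Z/2\Z$-factor swaps a pair), which forces $f_A$ to be irreducible and $[\Q(\alpha_A):\Q] = 2g$. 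Since $q/\alpha_A = \overline{\alpha_A} \in \Q(\alpha_A)$, the subfield $\Q(\alpha_A + q/\alpha_A)$ is totally real of index $2$ in $\Q(\alpha_A)$, so $\Q(\alpha_A)$ is a CM field whose Galois closure is $K_{f_A}$ with group $W_{2g}$. This exhibits $\Q(\alpha_A)$ as a Weyl CM field, and Remark \ref{Weylremark} provides the non-Galoisness when $g \geq 2$.

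With the intermediate step in hand, set
\begin{align*}
\mathcal{W}_g(p^n):=\{[A] \in \mathcal{A}_g(p^n) \suchthat \Gal(K_{f_A}/\Q) \cong W_{2g}\}.
\end{align*}
The argument above places $\mathcal{W}_g(p^n)$ inside the set counted in the numerator of Theorem \ref{density}. Since Kowalski's theorem yields $\#\mathcal{W}_g(p^n)/\#\mathcal{A}_g(p^n) \to 1$ as $n \to \infty$, the desired limit follows by sandwiching between this ratio and $1$. The main technical point to check carefully is the claim that $\Gal(K_{f_A}/\Q)$ lies inside $W_{2g}$ rather than the full $S_{2g}$; this rests on the Weil relation $\alpha_i \overline{\alpha_i} = q$ and the rationality of $q$, both of which are standard but must be invoked explicitly when unpacking the definition of $W_{2g}$. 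A mild secondary point is to confirm that $f_A$ has $2g$ distinct roots whenever the Galois group is the full Weyl group; this is automatic, since a repeated root would force $\Gal(K_{f_A}/\Q)$ to embed into a symmetric group on fewer than $2g$ letters, contradicting $\#W_{2g} = 2^g g!$.
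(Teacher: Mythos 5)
Your overall plan matches the paper's proof exactly in structure: combine Kowalski's density result (Theorem \ref{Kdensity}), the characterization of $\Q(\alpha_A)$ as a non-Galois Weyl CM field when $G_{f_A}\cong W_{2g}$ (the paper's Proposition \ref{cmcharacterization}), and then Theorem \ref{weyl}. The genuine difference is in how you prove the characterization. The paper's Proposition \ref{cmcharacterization} is proved by asserting that $m:=[\Q(\alpha_A):\Q]$ divides $2g$, noting that $m<2g$ would then force $m\le g$ and hence $[K_{f_A}:\Q]\le m!\le g!<2^g g!$, deducing $m=2g\ge 4$, and invoking Lemma \ref{q-number} for the CM property. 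You instead work directly with the $W_{2g}$-structure on the roots: the involution $\alpha_i\mapsto q/\alpha_i$ is Galois-equivariant since $q\in\Q$, so $G_{f_A}$ factors through $W_{2g}$; cardinality then forces the image to be all of $W_{2g}$, whose transitivity gives irreducibility of $f_A$ and hence $m=2g$. This is a different and more transparently Galois-theoretic route to the same degree computation, and it avoids the divisibility $m\mid 2g$ (which the paper states without proof). But it puts more weight on the combinatorics of the roots, where your justification is not yet complete.

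Concretely, your ``mild secondary point'' that $f_A$ must have $2g$ distinct roots has a gap as stated. You argue that a repeated root would force $G_{f_A}$ to embed in $S_k$ for some $k<2g$, ``contradicting $\#W_{2g}=2^g g!$.'' Read as a size comparison this is false: already for $g=3$ one has $(2g-1)!=120>48=2^g g!$, so the order of $W_{2g}$ alone does not forbid an embedding into $S_{2g-1}$. The correct obstruction is Lagrange's theorem together with a $2$-adic count: since $(2g)!=(2g-1)!!\cdot 2^g g!$ one has $v_2((2g)!)=g+v_2(g!)=v_2(2^g g!)$, hence $v_2((2g-1)!)=v_2(2^g g!)-1-v_2(g)<v_2(2^g g!)$, so $2^g g!\nmid k!$ for all $k\le 2g-1$ and $W_{2g}$ cannot embed into $S_k$. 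You should also explicitly rule out a root fixed by the involution, i.e.\ $\alpha^2=q$: such a fixed point breaks the $g$-pair structure your embedding into $W_{2g}$ relies on, and since $G_{f_A}$ must lie in the centralizer of the involution, an involution with two fixed points would confine $G_{f_A}$ to a group of order $2\cdot 2^{g-1}(g-1)!=2^g(g-1)!<2^g g!$, again a contradiction. With these two points repaired your argument goes through and gives an alternative proof of the paper's Proposition \ref{cmcharacterization}.
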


On the other hand, let $\mathcal{A}_g^{s}(q)$ be the set of isogeny classes of \textit{simple} abelian varieties $A/\mathbb{F}_{q}$ of dimension $g$.
We will use work of Greaves-Odoni \cite{GO88} and Honda-Tate (see e.g. \cite{Tat71})
to prove that given a CM field $E$ of degree $2g$ and an integer $n \geq 2$, there is a set of prime numbers $p \in \Z$ with positive natural
density such that $E \cong \Q(\pi_A)$ for some simple abelian variety $A/\mathbb{F}_{p^n}$ of dimension
$g$. It seems likely that a modification of the methods in \cite{Kow06} can be used to prove that
the proportion of isogeny classes $[A] \in \mathcal{A}_g^s(p^n)$ which satisfy 
$\Gal(K_{f_A}/\Q) \cong W_{2g}$ approaches 1 as $n \rightarrow \infty$. As in Corollary \ref{density}, it would follow that if $g \geq 2$,
then
\begin{align*}
\lim_{n \rightarrow \infty} \frac{\# \{[A] \in \mathcal{A}_g^s(p^n) \suchthat
\text{$\Q(\pi_A)$ is a non-Galois CM field which satisfies the Colmez conjecture}\}}{\# \mathcal{A}_g^s(p^n)} = 1.
\end{align*}

\subsection{Outline of the proofs of the main results}\label{Outline}
We now briefly outline the proofs of Theorems \ref{maintheorem} and \ref{weyl}. 

Let $E$ be a CM field of degree $2n$ and $\Phi(E)$ be the set of CM types for $E$. Let $\Q^{\CM}$ be the compositum of all CM fields. 
Then the Galois group $G^{\CM}:=\Gal(\Q^{\CM}/\Q)$ acts on $\Phi(E)$ by composition.  
By a careful study of the action of $G^{\CM}$ on $\Phi(E)$ and a theorem of Colmez \cite[Th\'eoreme 0.3]{Col93} which relates the Faltings height of 
a CM abelian variety $X_{\Phi}$ of type $(\mathcal{O}_E, \Phi)$ to the ``height" of a certain locally constant function on 
$G^{\CM}$ constructed from the CM pair $(E, \Phi)$, we will prove that the Faltings height of 
$X_{\Phi}$ depends only on the $G^{\CM}$-orbit of $\Phi$. Given this result, we will prove that if the action 
of $G^{\CM}$ on $\Phi(E)$ is transitive, then an averaged version of the Colmez conjecture proved recently by 
Andreatta-Goren-Howard-Madapusi Pera \cite{AGHM15} and Yuan-Zhang \cite{YZ15} implies the Colmez conjecture for $E$. 

Now, let $\Phi$ be a CM type and
$E_{\Phi}$ be the associated reflex field. The reflex degree satisfies $[E_{\Phi}:\Q] \leq 2^n$. We will prove
that the action of $G^{\CM}$ on $\Phi(E)$
is transitive if and only if $[E_{\Phi}:\Q]=2^n$. 
In particular, by the results discussed in the previous paragraph, if  $[E_{\Phi}:\Q]=2^n$ then the Colmez conjecture is true for $E$. 
This leads to the problem of constructing CM fields with
reflex fields of maximal degree. 

Roughly speaking, Theorems A and B comprise two different ways of constructing infinite families of CM fields with reflex fields of maximal degree. 
Our approach to Theorem A is as follows. Let $F$ be a fixed totally real number field of degree $n \geq 3$.
Based on an idea of Shimura \cite{Shi70}, in Section \ref{reflexsection} we explicitly construct infinite families of CM extensions $E/F$
such that $E/\Q$ is non-Galois and the reflex fields $E_{\Phi}$ have maximal degree. 
This construction is quite elaborate, and consists of two main parts. First, in Proposition \ref{InfiniteCMFields} 
we explicitly construct infinite families of CM extensions $E/F$ 
with ``arbitrary'' prescribed ramification. Second, in Theorem \ref{Shimura} we prove that if $E/F$ is a CM extension satisfying 
a certain mild ramification condition, then the reflex fields $E_{\Phi}$ have maximal degree, and moreover, if $n \geq 3$ then $E/\Q$ is 
non-Galois. By combining these two results, we 
will obtain Theorem A. For the convenience of the reader, we have summarized this construction in Section \ref{algorithm}, Algorithm 1. 
On the other hand, to prove Theorem B, we will show that the reflex fields of a Weyl CM field have maximal degree.

\section{CM types and their equivalence}\label{CMSection}

In this section we prove some important facts that we will need regarding CM types and their equivalence.

Let $\Q^{\CM}$ be the compositum of all CM fields. Then $\Q^{\CM}/\Q$ is a Galois extension of infinite degree, and the
Galois group $G^{\CM}:= \op{Gal}(\Q^{\CM}/\Q)$ is a profinite group with the Krull topology.
Recall that the open sets of $G^{\CM}$ with the Krull topology are the empty set $\varnothing$ and the arbitrary unions
\begin{align*}
\bigcup_{i \in I} \sigma_i \op{Gal}(\Q^{\CM}/ E_i),
\end{align*}
where for every $i \in I$ we have $\sigma_i \in G^{\CM}$ and $\Q \subseteq E_i \subseteq \Q^{\CM}$ with $[E_i : \Q] < \infty$ and $E_i/\Q$ a Galois extension.
The group $G^{\CM}$ is Hausdorff, compact, and totally disconnected (see e.g. \cite[Chapter IV]{Mor96}). A function $f: G^{\CM} \rightarrow \overline{\Q}$
is \textit{locally constant} if for each $g \in G^{\CM}$, there is a neighborhood $N_g$ of $g$ such that $f$ is constant on $N_g$.

Let $c \in G^{\CM}$ denote complex conjugation.

\begin{definition}\label{UsualCM}
Let $E$ be a CM field of degree $2n$. A \textit{CM type} for $E$ is a set $\Phi_E$ consisting of
embeddings $E \hookrightarrow \overline{\Q}$ such that  $\op{Hom}(E, \overline{\Q}) = \Phi_E \bigcupdot c \Phi_E$.
We denote the set of all CM types for $E$ by $\Phi(E)$. The Galois group $G^{\CM}$ acts on $\Phi(E)$ as follows.
For $\Phi_E:= \{ \sigma_1, \dots, \sigma_n \} \in \Phi(E)$ and $\tau \in G^{\CM}$ let
\begin{align*}
\tau \cdot\Phi_E= \tau\Phi_E := \{ \tau \sigma_1, \dots, \tau \sigma_n \} \in \Phi(E).
\end{align*}
Two CM types $\Phi_E, \Phi_E^{'} \in \Phi(E)$ are said to be \textit{equivalent}
if they lie in the same orbit under the action of $G^{\CM}$, i.e., if there is an element $\tau \in G^{\CM}$ such that $\Phi_E = \tau \cdot \Phi_E^{'}$.
\end{definition}

We also have the following alternative definition.

\begin{definition} \label{FunctionalCM}
A \textit{CM type} is a locally constant function
$\Phi: G^{\CM} \longrightarrow \overline{\Q}$ such that
$\Phi(g) \in \{ 0, 1 \}$ and $\Phi(g) + \Phi(cg) = 1$ for every $g \in G^{\CM}$. We let
\begin{align*}
\CM := \{ \Phi: G^{\CM} \longrightarrow \overline{\Q} \suchthat \text{$\Phi$ is a CM type} \}
\end{align*}
be the set of all CM types. The Galois group $G^{\CM}$ acts on $\CM$ as follows.  For
$\Phi \in \CM$ and $\tau \in G^{\CM}$, let $\tau \cdot \Phi \in \CM$ be the CM type defined by
\begin{align*}
(\tau \cdot \Phi)(g):= \Phi(\tau^{-1} g) \quad \text{for every $g\in G^{\CM}$}.
\end{align*}
Two CM types $\Phi, \Phi^{'} \in \CM$ are said to be \textit{equivalent} if
they lie in the same orbit under the action of $G^{\CM}$, i.e., if there is an element $\tau \in G^{\CM}$ such that
$\Phi(g) = \Phi^{'}(\tau^{-1} g)$ for every $g \in G^{\CM}$.
\end{definition}

The following proposition gives a dictionary relating the two notions of a CM type and their equivalence.

\begin{proposition}\label{dictionary}
The following statements are true.
\begin{itemize}
\item[(i)] Let $E$ be a CM field and $\Phi_E \in \Phi(E)$.
Define the function $\Phi: G^{\CM} \longrightarrow \overline{\Q}$ by
\begin{align*}
\Phi(g):= \chi_{\Phi_E}(g|_{E}), \quad g \in G^{\CM}
\end{align*}
where $ \chi_{\Phi_E}$ denotes the characteristic function of the set $\Phi_E$ and $g|_E$ is the restriction of $g$ to $E$. Then $\Phi \in \CM$. Moreover, if
$\Phi_{E}^{\prime} \in \Phi(E)$ is equivalent to $\Phi_E$ and $\tau \in G^{\CM}$ is such that $\Phi_E=\tau \cdot \Phi_E^{\prime}$,
then $\Phi^{\prime}$ is equivalent to $\Phi$ with $\Phi = \tau \cdot \Phi^{\prime}$.
\vspace{0.05in}

\item[(ii)] Let $\Phi \in \CM$. Then there exists a Galois CM field $E$ such that for every $g \in G^{\CM}$
and every $h \in \op{Gal}(\Q^{\CM}/E)$, we have $\Phi(gh) = \Phi(g)$. Moreover, if $[g]:=g\op{Gal}(\Q^{\CM}/E)$ and we define
\begin{align*}
\Phi_E:=\{ \sigma \in \op{Hom}(E, \overline{\Q}) \suchthat \text{there exists} ~ g \in G^{\CM} \text{ with $\sigma = g|_E$ and $\Phi([g]) = \{ 1 \}$} \},
\end{align*}
then $\Phi_E \in \Phi(E)$. Finally, if $\Phi^{\prime} \in \CM$ is equivalent to $\Phi$ and $\tau \in G^{\CM}$ is such that $\Phi=\tau \cdot \Phi^{\prime}$,
then for every $g \in G^{\CM}$ and every $h \in \op{Gal}(\Q^{\CM}/E)$, we have $\Phi^{\prime}(gh) = \Phi^{\prime}(g)$,
and $\Phi_E^{\prime}$ is equivalent to $\Phi_E$ with $\Phi_E = \tau \cdot \Phi_E^{\prime}$.
\end{itemize}
\end{proposition}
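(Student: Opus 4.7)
The plan is to handle parts (i) and (ii) separately, in each case verifying the CM type conditions for the constructed object and then tracking the $G^{\CM}$-action under the correspondence. For part (i), starting from $\Phi_E \in \Phi(E)$ I would set $\Phi(g) := \chi_{\Phi_E}(g|_E)$. The values lie in $\{0,1\}$ trivially; the relation $\Phi(g) + \Phi(cg) = 1$ follows from the identity $(cg)|_E = c \circ (g|_E)$ combined with the disjoint decomposition $\op{Hom}(E,\overline{\Q}) = \Phi_E \cupdot c\Phi_E$; and local constancy holds because $\Phi(g)$ depends only on the image of $g$ in the finite quotient $\op{Gal}(E^s/\Q)$, where $E^s$ is the Galois closure of $E$ over $\Q$, so $\Phi$ is constant on every coset of the open subgroup $\op{Gal}(\Q^{\CM}/E^s)$. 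The equivariance statement then follows from a direct calculation: if $\Phi_E = \tau \Phi_E'$, then since restriction to $E$ commutes with left composition by elements of $G^{\CM}$, I obtain $\Phi(g) = \chi_{\tau\Phi_E'}(g|_E) = \chi_{\Phi_E'}(\tau^{-1}(g|_E)) = \Phi'(\tau^{-1}g) = (\tau\cdot\Phi')(g)$.

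For part (ii), the first and only substantive step is to produce a finite Galois CM field $E \subset \Q^{\CM}$ such that $\Phi$ factors through $G^{\CM}/\op{Gal}(\Q^{\CM}/E)$. Local constancy together with the compactness of $G^{\CM}$ yields a finite cover by basic open sets $g_i\op{Gal}(\Q^{\CM}/E_i)$ on which $\Phi$ is constant; letting $L$ be the Galois closure of the compositum of the $E_i$ produces a finite Galois number field $L \subset \Q^{\CM}$ with $\Phi(gh) = \Phi(g)$ for all $g \in G^{\CM}$ and $h \in \op{Gal}(\Q^{\CM}/L)$. I would then enlarge $L$ to a finite Galois CM field $E$, exploiting the fact that every finite subextension of $\Q^{\CM}$ is contained in a finite CM field (since the compositum of two CM subfields of $\overline{\Q}$ is again CM, being closed under complex conjugation with $c$ commuting with every embedding into $\C$), and that the Galois closure of a CM field is CM.

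With such an $E$ in hand, the set $\Phi_E$ is well-defined because any two lifts of a given $\sigma \in \op{Hom}(E,\overline{\Q})$ to $G^{\CM}$ differ by an element of $\op{Gal}(\Q^{\CM}/E)$, on which $\Phi$ is constant; the CM type condition $\op{Hom}(E,\overline{\Q}) = \Phi_E \cupdot c\Phi_E$ is then the direct translation of $\Phi(g) + \Phi(cg) = 1$ via $c\sigma = (cg)|_E$. For the equivariance, substituting $g = \tau g'$ in $\Phi(g) = \Phi'(\tau^{-1}g)$ gives $\Phi(\tau g') = \Phi'(g')$, so $\tau \sigma' \in \Phi_E$ exactly when $\sigma' \in \Phi_E'$, yielding $\Phi_E = \tau \Phi_E'$. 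The principal technical hurdle is producing the Galois CM field $E$ in the first step of part (ii); once that reduction is in place, the rest of the proof is a careful unpacking of the two viewpoints and of how the $G^{\CM}$-action translates under restriction.
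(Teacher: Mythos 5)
Your proposal is correct and follows essentially the same route as the paper: part (i) is the same direct verification of the three CM-type conditions plus the equivariance computation, and part (ii) uses local constancy plus compactness of $G^{\CM}$ to extract a finite cover and then builds a finite Galois CM field, followed by the same translation of $\Phi(g)+\Phi(cg)=1$ and the same coset-substitution for equivariance. The only cosmetic difference is in the construction of the Galois CM field: the paper arranges for each covering coset to be cut out by a Galois CM field \emph{before} invoking compactness, whereas you extract the finite subcover first and then enlarge the compositum to a CM field afterward; both orderings rely on the same facts (composita and Galois closures of CM fields are CM) and are equivalent.
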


For clarity we divide the proof of Proposition \ref{dictionary} into the following two subsections.

\subsection{Proof of Proposition \ref{dictionary} (i)}\label{functional}
Let $E$ be a CM field and $\Phi_E \in \Phi(E)$ be a CM type for $E$.
Define the function $\Phi: G^{\CM} \longrightarrow \{ 0, 1 \}$ by
\begin{align*}
\Phi(g):= \chi_{\Phi_E}(g|_{E}), \quad g \in G^{\CM}
\end{align*}
where $\chi_{\Phi_E}$ is the characteristic function of the set
$\Phi_E$ and $g|_E \in \op{Hom}(E, \overline{\Q})$ is the restriction of $g$ to $E$.
We now prove that $\Phi \in \CM $.

Let $g \in G^{\CM}$. Since $\op{Hom}(E, \overline{\Q}) = \Phi_E \bigcupdot c \Phi_E$, we either have
$g|_E \in \Phi_E$ or $g|_E \in c\Phi_E$, or equivalently, $g|_E \in \Phi_E$ or $(cg)|_E \in \Phi_E$.
This proves that $\Phi(g) + \Phi(cg) = 1$. It remains to prove that $\Phi$ is locally constant.
Let $E^{s}$ be the Galois closure of $E$. Then $E^s$ is also a CM field (see e.g. \cite[Proposition 5.12]{Shi94}), and it follows that
$g \op{Gal}(\Q^{\CM}/E^{s})$ is an open set containing $g$.
Now, observe that for any $h \in \op{Gal}(\Q^{\CM}/E^s)$, we have $h|_E = \textrm{id}_E$, so that $(gh)|_E = g |_E$.
Therefore
\begin{align*}
\Phi(gh) = \chi_{\Phi_E}((gh)|_{E}) = \chi_{\Phi_E}(g|_{E}) = \Phi(g),
\end{align*}
which implies that $\Phi$ is constant on $g \op{Gal}(\Q^{\CM}/E^s)$.
It follows that $\Phi$ is locally constant, and hence $\Phi \in \CM$.

Now, suppose that $\Phi_E$ and $\Phi_E^{'}$ are equivalent CM types for $E$.
Let $\tau \in G^{\CM}$ be such $\Phi_E = \tau \Phi_E^{'}$. Then for an arbitrary element $g \in G^{\CM}$,
the corresponding CM types $\Phi, \Phi^{'} \in \CM$ satisfy
\begin{align*}
\Phi(g) = \chi_{\Phi_E}(g|_E) = \chi_{\tau \Phi_E^{'}}(g|_E) = \chi_{\Phi_E^{'}}((\tau^{-1} g)|_E) = \Phi^{'}(\tau^{-1} g).
\end{align*}
Therefore, $\Phi$ is equivalent to $\Phi^{\prime}$ with $\Phi = \tau \cdot \Phi^{\prime}$. This completes the proof of
Proposition \ref{dictionary} (i). \qed

\subsection{Proof of Proposition \ref{dictionary} (ii)}\label{regular}

The first assertion of Proposition \ref{dictionary} (ii) is proved in the following lemma.

\begin{lemma}\label{CMFieldE}
Let $\Phi \in \CM$ be a CM type. Then there exists a Galois CM field $E$ such that for every $g \in G^{\CM}$ and every $h \in \op{Gal}(\Q^{\CM}/E)$
we have $\Phi(gh) = \Phi(g)$.
\end{lemma}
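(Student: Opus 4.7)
\textbf{Proof proposal for Lemma \ref{CMFieldE}.} The plan is to combine local constancy of $\Phi$ with compactness of $G^{\CM}$ to show that $\Phi$ factors through a finite quotient, and then to arrange for the corresponding fixed field to be a Galois CM field. Concretely, I want to produce a finite Galois CM field $E \subseteq \Q^{\CM}$ whose stabilizer $\op{Gal}(\Q^{\CM}/E)$ is contained in the (open) stabilizer of $\Phi$ in the natural right-translation action.

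First, I would use local constancy to produce, for each $g \in G^{\CM}$, a basic open neighborhood of $g$ on which $\Phi$ is constant. By the description of the Krull topology given in the excerpt, I may take this neighborhood to be a coset $g\op{Gal}(\Q^{\CM}/E_g)$, where $E_g \subseteq \Q^{\CM}$ is a finite Galois extension of $\Q$. The second step is to replace $E_g$ by a finite Galois \emph{CM} field $F_g$ with $E_g \subseteq F_g$, which shrinks the coset but keeps $\Phi$ constant on it. This is possible because $E_g \subseteq \Q^{\CM}$ is generated by elements from finitely many CM fields $K_1, \dots, K_m$; their compositum $K_1 \cdots K_m$ is a CM field (a well-known fact), and its Galois closure is still a CM field by \cite[Proposition 5.12]{Shi94} (already invoked in Section \ref{functional}) and contains $E_g$.

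Now I invoke compactness of $G^{\CM}$: the cover $\{g\op{Gal}(\Q^{\CM}/F_g)\}_{g \in G^{\CM}}$ admits a finite subcover
\begin{align*}
G^{\CM} = \bigcup_{i=1}^{r} g_i\op{Gal}(\Q^{\CM}/F_{g_i}).
\end{align*}
Set $E := F_{g_1} \cdots F_{g_r}$. Then $E$ is a finite Galois CM field (compositum of finitely many finite Galois CM fields), and its stabilizer satisfies
\begin{align*}
\op{Gal}(\Q^{\CM}/E) = \bigcap_{i=1}^{r}\op{Gal}(\Q^{\CM}/F_{g_i}).
\end{align*}

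For the final step, I verify the desired invariance. Given $g \in G^{\CM}$ and $h \in \op{Gal}(\Q^{\CM}/E)$, pick $i$ with $g \in g_i \op{Gal}(\Q^{\CM}/F_{g_i})$; then also $gh \in g_i \op{Gal}(\Q^{\CM}/F_{g_i})$ because $h$ lies in each $\op{Gal}(\Q^{\CM}/F_{g_i})$. Since $\Phi$ is constant on this coset, $\Phi(gh) = \Phi(g_i) = \Phi(g)$, as required. The main technical obstacle is step two, namely ensuring that the shrinking of $E_g$ can be carried out inside the class of CM fields; everything else is a clean application of the standard ``locally constant plus compact implies factors through a finite quotient'' principle.
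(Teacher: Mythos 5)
Your proof is correct and follows essentially the same route as the paper: local constancy gives a cover by cosets of open normal subgroups, one enlarges to make the fixed fields Galois CM, compactness extracts a finite subcover, and the compositum of the corresponding Galois CM fields does the job. Your argument fills in a bit more detail on why a Galois CM field $F_g \supseteq E_g$ exists (the paper simply takes one), while the paper is a bit more careful in deducing a single-coset neighborhood from the definition of the Krull topology; these are cosmetic differences only.
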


\begin{proof} Let $g \in G^{\CM}$. Since $\Phi$ is locally constant, there exists an open set $U_g$ containing $g$ such that $\Phi$ is
constant on $U_g$. Now, by definition of the Krull topology we have
\begin{align*}
U_g=\bigcup_{i \in I}g_i\Gal(\Q^{\CM}/E_i),
\end{align*}
where for every $i \in I$ we have $g_i \in G^{\CM}$ and $\Q \subseteq E_i \subseteq \Q^{\CM}$ with $[E_i : \Q] < \infty$ and $E_i/\Q$ a Galois extension.
Since $g \in U_g$, we have $g \in g_{i_0}\Gal(\Q^{\CM}/E_{i_0})$ for some $i_{0} \in I$. It follows that $g\Gal(\Q^{\CM}/E_{i_0}) \subseteq g_{i_0}\Gal(\Q^{\CM}/E_{i_0})$.
Let $E_g$ be any Galois CM field containing $E_{i_0}$. Then
\begin{align*}
g\Gal(\Q^{\CM}/E_g) \subseteq g\Gal(\Q^{\CM}/E_{i_0}) \subseteq g_{i_0}\Gal(\Q^{\CM}/E_{i_0}) \subseteq U_g.
\end{align*}
From the preceding facts, we conclude that

\begin{align*}
\{ g\Gal(\Q^{\CM}/E_g) \suchthat  g \in G^{\CM} \}
\end{align*}
is an open cover of $G^{\CM}$ such that $\Phi$ is constant on each of the
sets $g\Gal(\Q^{\CM}/E_g)$.

Now, since $G^{\CM}$ is compact, there exists a finite subcover $$\{ g_j\Gal(\Q^{\CM}/E_{g_j}) \}_{j = 1}^{r}$$
for some elements $g_j \in G^{\CM}$. Let $E:= E_{g_1} \cdots E_{g_r}$ be the compositum of the Galois CM fields $E_{g_j}$.
Then $E$ is a Galois CM field (see e.g. \cite[Proposition 5.12]{Shi94}). To complete the proof, we will show that
$\Phi$ is constant on $g \Gal(\Q^{\CM}/E)$ for every $g \in G^{\CM}$.

Since $\Phi$ is constant on each $g_j\Gal(\Q^{\CM}/E_{g_j})$, it suffices to show that there
exists an integer $j \in \{ 1, \dots, r \}$ such that $g \Gal(\Q^{\CM}/E) \subset g_j\Gal(\Q^{\CM}/E_{g_j})$.
Since $$\{ g_j\Gal(\Q^{\CM}/E_{g_j}) \}_{j = 1}^{r}$$  covers $G^{\CM}$, there exists an integer $j \in \{ 1, \dots, r \}$
such that $g \in g_j\Gal(\Q^{\CM}/E_{g_j})$. This implies that $g = g_j h_j$ for some $h_j \in \Gal(\Q^{\CM}/E_{g_j})$.
Let $\sigma \in g \Gal(\Q^{\CM}/E)$. Then $\sigma = gh$ for some $h \in \Gal(\Q^{\CM}/E)$, hence
$\sigma = g_j h_j h$. Moreover, since $\Gal(\Q^{\CM}/E) \subset \Gal(\Q^{\CM}/E_{g_i})$, we have $h_j h \in \Gal(\Q^{\CM}/E_{g_j})$. It
follows that $\sigma \in g_j\Gal(\Q^{\CM}/E_{g_j})$, and so $g \Gal(\Q^{\CM}/E) \subset g_j\Gal(\Q^{\CM}/E_{g_j})$, as desired.
\end{proof}

We now prove the second assertion of Proposition \ref{dictionary} (ii).
Let $\Phi \in \CM$ be a CM type. By Lemma \ref{CMFieldE}, there exists a Galois CM field
$E$ such that $\Phi$ is constant on $g \op{Gal}(\Q^{\CM}/E)$ for every $g \in G^{\CM}$. For notational convenience,
we define $[g]:= g \op{Gal}(\Q^{\CM}/E)$. Since $E/\Q$ is Galois, we have $\op{Hom}(E, \overline{\Q}) = \op{Gal}(E/\Q)$, and
\begin{align}\label{hom}
\frac{G^{\CM}}{\Gal(\Q^{\CM}/E)} \cong \op{Hom}(E, \overline{\Q}).
\end{align}
Define the set
\begin{align*}
\Phi_E:=\{ \sigma \in \op{Hom}(E, \overline{\Q}) \suchthat \text{there exists} ~ g \in G^{\CM} \text{ with $\sigma = g|_E$ and $\Phi([g]) = \{ 1 \}$} \}.
\end{align*}
We now show that $\Phi_E \in \Phi(E)$.

By (\ref{hom}), given an element $\sigma \in \op{Hom}(E, \overline{\Q})$,
there is a unique coset $[g] \in G^{\CM}/\Gal(\Q^{\CM}/E)$ such that $\sigma = g|_E$.
Since $\Phi$ is constant on each coset $[g]$, it follows that either $\Phi([g]) = \{ 0 \}$ or $\Phi([g]) = \{1 \}$.
Suppose that $\sigma \not \in \Phi_E$. Then $\Phi([g]) = \{ 0 \}$, so that $\Phi([cg]) = \{ 1\}$. Moreover, we have
$c \sigma = (cg)|_E$, and thus $c\sigma \in \Phi_E$, or equivalently, $\sigma \in c \Phi_E$. A short calculation shows
that $\Phi_E \cap c\Phi_E = \varnothing$. Hence
$\op{Hom}(E, \overline{\Q}) = \Phi_E \bigcupdot c \Phi_E$, and we conclude that $\Phi_E \in \Phi(E)$.

Finally, we prove the third assertion of Proposition \ref{dictionary} (ii).  Suppose that
$\Phi^{'} \in \CM$ is equivalent to $\Phi$. Let
$\tau \in G^{\CM}$ be such that $\Phi= \tau \cdot \Phi^{\prime}$, i.e.
$\Phi(g) = \Phi^{'}(\tau^{-1} g)$ for every $g \in G^{\CM}$.  Since $\Phi$ is
constant on $\tau g \op{Gal}(\Q^{\CM}/E)$, it follows that
for every $g \in G^{\CM}$ and every $h \in \Gal(\Q^{\CM}/E)$, we have
\begin{align*}
\Phi^{'}(gh) = \Phi(\tau gh) = \Phi(\tau g) = \Phi^{'}(g).
\end{align*}
Let
\begin{align*}
\Phi_E^{'}:=\{ \sigma' \in \op{Hom}(E, \overline{\Q}) \suchthat \text{there exists} ~ g' \in G^{\CM} \text{ with $\sigma = g'|_E$ and $\Phi^{'}([g']) = \{ 1 \}$} \}.
\end{align*}
We will prove that $\Phi_E = \tau \Phi_E^{'}$.
We need only prove the containment $\Phi_E \subseteq \tau \Phi_E^{'}$, since the reverse containment can be proved mutatis mutandis.
Let $\sigma \in \Phi_E$ and let $g \in G^{\CM}$ be such that $\sigma = g|_E$ and $\Phi(g) = 1$.
Then this implies that $\Phi^{'}(\tau^{-1} g) = 1$. Finally, let $\sigma' := (\tau^{-1} g)|_E$. Then $\sigma' \in \Phi_E^{'}$,
and moreover $\sigma = \tau \sigma'$, so that $\sigma \in \tau \Phi_E^{'}$. Hence $\Phi_E \subseteq \tau \Phi_E^{'}$.
This completes the proof of Proposition 2.3 (ii). \qed

\vspace{0.10in}

\textbf{Important Remark}. \textit{In light of Proposition \ref{dictionary}, from here forward
we will use the two different notions of CM type and equivalence of CM types interchangeably, leaving it to the reader
to distinguish  which notion is being used from the context}.

\section{Faltings heights and the Colmez conjecture}\label{colmezstatement}

In this section we review the statement of the Colmez conjecture, following closely the discussion in \cite{Col98} and \cite{Yan10b}.

We begin by recalling the definition of the Faltings height of a CM abelian variety.
Let $F$ be a totally real number field of degree $n$. Let $E/F$ be a CM extension of $F$ and $\Phi \in \Phi(E)$
be a CM type for $E$. Let $X_{\Phi}$ be an abelian variety
defined over $\overline{\mathbb Q}$ with complex multiplication by $\mathcal{O}_E$ and
CM type $\Phi$. We call $X_{\Phi}$ a CM abelian variety of type $(\mathcal{O}_E,\Phi)$.
Let $K \subseteq \overline{\mathbb Q}$ be a number field over which $X_{\Phi}$ has everywhere good reduction
and choose a N\'eron differential $\omega \in H^{0}(X_{\Phi}, \Omega^n_{X_{\Phi}})$. Then the \textit{Faltings height} of $X_{\Phi}$ is defined by
\begin{align*}
h_{\textrm{Fal}}(X_{\Phi}):= -\frac{1}{2[K:\mathbb Q]}\sum_{\sigma : K \hookrightarrow \mathbb{C}}
\log\left| \int_{X_{\Phi}^{\sigma}(\mathbb{C})} \omega^{\sigma} \wedge \overline{\omega^{\sigma}}\right|.
\end{align*}
The Faltings height does not depend on the choice of $K$ or $\omega$.
Moreover, Colmez \cite{Col93} proved that if $X_{\Phi}$ and $Y_{\Phi}$ are CM abelian varieties of
type $(\mathcal{O}_E, \Phi)$, then $h_{\textrm{Fal}}(X_{\Phi})=h_{\textrm{Fal}}(Y_{\Phi})$, i.e., the Faltings height depends
on the CM type $\Phi$, but does not depend on the choice of CM abelian variety $X_{\Phi}$.

Let $H(G^{\CM}, \overline{\Q})$ be the Hecke algebra of Schwartz functions on the Galois group $G^{\CM}$
which take values in $\overline{\Q}$ (see e.g. \cite{Win89}). This is the $\overline{\Q}$-algebra of locally constant, compactly supported
functions $f: G^{\CM} \longrightarrow \overline{\Q}$ with multiplication of functions $f_1, f_2 \in H(G^{\CM}, \overline{\Q})$ given by the convolution
\begin{align*}
(f_1 * f_2) (g) := \int_{G^{\CM}} f_1(h) f_2(h^{-1}g) \, d\mu(h).
\end{align*}
Here $\mu$ is the left-invariant Haar measure on $G^{\CM}$, normalized so that
\begin{align*}
\op{Vol}(G^{\CM}) = \int_{G^{\CM}} \, d\mu(g) = 1.
\end{align*}

The Hecke algebra $H(G^{\CM}, \overline{\Q})$ is an associative algebra with no identity element. For a function $f \in H(G^{\CM}, \overline{\Q})$,
the \textit{reflex function} $f^{\vee} \in H(G^{\CM}, \overline{\Q})$ is defined by $f^{\vee}(g) := \overline{f(g^{-1})}$.
We define a Hermitian inner product on $H(G^{\CM}, \overline{\Q})$ by
\begin{align*}
\langle f_1, f_2 \rangle := \int_{G^{\CM}} f_1(h) \overline{f_2(h)} \, d\mu(h).
\end{align*}

Let $H^0(G^{\CM}, \overline{\Q})$ be the $\overline{\Q}$-subalgebra of $H(G^{\CM}, \overline{\Q})$ of class functions,
i.e., the $\overline{\Q}$-subalgebra of functions $f \in H(G^{\CM}, \overline{\Q})$ satisfying $f(hgh^{-1}) = f(g)$ for all $h, g \in G^{\CM}$.
It is known that an orthonormal basis for $H^0(G^{\CM}, \overline{\Q})$ is given by
the set $$\{\chi_{\pi} \suchthat \text{$\pi$ an irreducible representation of $G^{\CM}$}\}$$ of Artin characters $\chi_{\pi}$
associated to the irreducible representations $\pi$ of $G^{\CM}$.

There is a projection map
\begin{align*}
H(G^{\CM}, \overline{\Q}) &\longrightarrow H^0(G^{\CM}, \overline{\Q})\\
f& \longmapsto f^0
\end{align*}
defined by
\begin{align*}
f^0(g):=  \int_{G^{\CM}} f(hgh^{-1}) \, d\mu(h).
\end{align*}
As a map of $\overline{\Q}$-vector spaces, it corresponds to the orthogonal projection of
$H(G^{\CM}, \overline{\Q})$ onto $H^0(G^{\CM}, \overline{\Q})$. In particular, one has
\begin{align*}
f^0 = \sum_{\chi_{\pi}} \langle f, \chi_{\pi} \rangle \chi_{\pi}.
\end{align*}

Define the functions
\begin{align*}
Z(f^{0},s):=\sum_{\chi_{\pi}}\langle f , \chi_{\pi}\rangle \frac{L^{\prime}(\chi_{\pi},s)}{L(\chi_{\pi},s)} \quad \textrm{and} \quad
\mu_{\textrm{Art}}(f^{0}):=\sum_{\chi_{\pi}}\langle  f , \chi_{\pi} \rangle \log(\frak{f}_{\chi_{\pi}}),
\end{align*}
where $L(\chi_{\pi},s)$ is the (incomplete) Artin $L$--function of $\chi_{\pi}$ and $\frak{f}_{\chi_{\pi}}$ is the analytic Artin conductor of $\chi_{\pi}$.

If $\Phi \in \CM$ is a CM type, we define the function $A_{\Phi}  \in H(G^{\CM}, \overline{\Q})$ by
\begin{align*}
A_{\Phi}:= \Phi * \Phi^{\vee}.
\end{align*}

Colmez \cite{Col93} made the following conjecture.

\begin{conjecture}[Colmez \cite{Col93}]\label{cc2} Let $E$ be a CM field, $\Phi$ be a CM type for $E$, and
$X_{\Phi}$ be a CM abelian variety of type $(\mathcal{O}_E, \Phi)$. Let $A_{E, \Phi}:=[E:\Q]A_{\Phi}.$  Then
\begin{align*}
h_{\mathrm{Fal}}(X_{\Phi})=-Z(A_{E, \Phi}^{0},0) - \frac{1}{2}\mu_{\mathrm{Art}}(A_{E, \Phi}^{0}).
\end{align*}
\end{conjecture}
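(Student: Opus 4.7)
The plan is to attack this conjecture by reducing it, orbit by orbit, to the averaged form that was recently established by Andreatta-Goren-Howard-Madapusi Pera \cite{AGHM15} and Yuan-Zhang \cite{YZ15}. Let $\Phi(E)$ be the set of CM types for $E$, equipped with the $G^{\CM}$-action described in Section \ref{CMSection}. The strategy splits into a formal orbit-invariance argument, an application of averaging, and a case analysis depending on the orbit structure of $\Phi(E)$.

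First, I would show that both sides of the conjectured identity depend only on the $G^{\CM}$-orbit of $\Phi$. For the left-hand side this uses a theorem of Colmez \cite{Col93} expressing $h_{\textrm{Fal}}(X_\Phi)$ as the ``height'' of the locally constant function $A_{E,\Phi}$; a direct unwinding of the convolution defining $A_\Phi = \Phi * \Phi^\vee$ yields $A_{E,\tau\cdot\Phi}(g) = A_{E,\Phi}(\tau^{-1}g\tau)$, so the class function projections $A^{0}_{E,\Phi}$ are genuinely invariant under $\Phi \mapsto \tau\cdot\Phi$ and in particular share the same Artin expansion $\sum_{\chi_\pi}\langle A_{E,\Phi},\chi_\pi\rangle \chi_\pi$. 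This invariance propagates immediately to $Z(A^{0}_{E,\Phi},0)$ and $\mu_{\textrm{Art}}(A^{0}_{E,\Phi})$ on the right-hand side, since those are built solely from the class function $A^0_{E,\Phi}$ and Artin-theoretic data.

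Next, I would sum the averaged Colmez identity over all $\Phi \in \Phi(E)$ and regroup by $G^{\CM}$-orbits. On both sides the sum collapses to $\sum_{\mathcal{O}} |\mathcal{O}| \cdot V(\mathcal{O})$, where $V(\mathcal{O})$ denotes the common value of the relevant functional on the orbit $\mathcal{O}$. In the case where $G^{\CM}$ acts \emph{transitively} on $\Phi(E)$, there is a unique orbit, the averaged identity collapses to the individual identity, and the conjecture is established for $E$. Transitivity can be detected arithmetically by the reflex degree: I would show it is equivalent to $[E_\Phi:\Q] = 2^n$ where $n=[F:\Q]$, reducing the conjecture in this regime to the construction of CM fields whose reflex attains maximal degree---precisely the content of Theorems \ref{maintheorem} and \ref{weyl}.

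The main obstacle is the non-transitive case. When $\Phi(E)$ decomposes into $r \geq 2$ orbits, the averaged Colmez identity provides only a single linear relation among the $r$ unknown values $V(\mathcal{O}_1),\ldots,V(\mathcal{O}_r)$, and orbit-invariance alone cannot separate them. Overcoming this would require either a refinement of the Arakelov-theoretic arguments of \cite{AGHM15,YZ15} that respects the finer orbit decomposition---for instance, an averaged identity over CM types compatible with an intermediate reflex subfield---or a genuinely new analytic input controlling the individual $L$-values appearing in $Z(A^{0}_{E,\Phi},0)$. Absent such input the conjecture in its full generality remains open, and the present paper establishes it in the largest classes of non-abelian CM fields currently accessible, complementing Colmez's abelian case \cite{Col93,Obu13} and Yang's quartic results \cite{Yan10a,Yan10b,Ya13}.
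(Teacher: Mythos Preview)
Your proposal is essentially the paper's own strategy, and you correctly recognize that it does \emph{not} prove the conjecture in general: the paper does not claim a proof of Conjecture~\ref{cc2}, only of the special cases where $G^{\CM}$ acts transitively on $\Phi(E)$ (Proposition~\ref{cc}), which is then fed into Theorems~\ref{maintheorem} and~\ref{weyl}. Your orbit-invariance step matches Lemmas~\ref{AzeroLemma} and~\ref{htequality}, your collapse of the average in the single-orbit case matches the proof of Proposition~\ref{cc}, and your final paragraph accurately names the obstruction in the multi-orbit case.

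One small remark on presentation: your route to $A_{\tau\cdot\Phi}^0 = A_\Phi^0$ via the conjugation identity $A_{\tau\cdot\Phi}(g) = A_\Phi(\tau^{-1}g\tau)$ is a bit cleaner than the paper's direct Haar-measure computation in Lemma~\ref{AzeroLemma}, though both arrive at the same place by the same underlying mechanism (bi-invariance of Haar measure on the compact group $G^{\CM}$).
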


Colmez \cite{Col93} proved Conjecture \ref{cc2} when $E/\Q$ is abelian,
up to addition of a rational multiple of $\log(2)$ which was recently shown to equal zero by Obus \cite{Obu13}.
Yang \cite{Yan10a, Yan10b, Ya13} proved Conjecture \ref{cc2} for a large class of non-biquadratic quartic CM fields, thus
establishing the only known cases of the Colmez conjecture when $E/\Q$ is \textit{non-abelian}.

\section{The average Colmez conjecture}\label{AverageColmezConjecture}

Let $F$ be a totally real number field of degree $n$. Let $E/F$ be a CM extension of $F$ and $\Phi(E)$ be the set of CM types for $E$. There are
$2^n$ CM types $\Phi \in \Phi(E)$. By averaging both sides of Conjecture \ref{cc2} over $\Phi(E)$, one gets the conjectural identity
\begin{align}\label{acc1}
\frac{1}{2^n}\sum_{\Phi \in \Phi(E)} h_{\mathrm{Fal}}(X_{\Phi})& = \frac{1}{2^n}\sum_{\Phi \in \Phi(E)}(- Z(A_{E, \Phi}^{0},0) - \frac{1}{2}\mu_{\mathrm{Art}}(A_{E, \Phi}^{0})).
\end{align}
The average on the right hand side of (\ref{acc1}) can be simplified. Namely, by \cite[Proposition 8.4.1]{AGHM15} we have
\begin{align}\label{Howard}
\frac{1}{2^n}\sum_{\Phi \in \Phi(E)}(- Z(A_{E, \Phi}^{0},0) - \frac{1}{2}\mu_{\mathrm{Art}}(A_{E, \Phi}^{0})) =
-\frac{1}{2}\frac{L^{\prime}(\chi_{E/F},0)}{L(\chi_{E/F},0)}
-\frac{1}{4}\log\left(\frac{|d_E|}{d_F}\right) - \frac{n}{2}\log(2\pi),
\end{align}
where $L(\chi_{E/F},s)$ is the (incomplete) $L$--function of the Hecke character $\chi_{E/F}$ associated to the quadratic extension $E/F$ and
$d_E$ (resp. $d_F$) is the discriminant of $E$ (resp. $F$).

These identities yield the following averaged version of the Colmez conjecture.

\begin{conjecture}[The Average Colmez Conjecture]\label{acc3} Let $F$ be a totally real number field of degree $n$.
Let $E/F$ be a CM extension of $F$, and for each CM type $\Phi \in \Phi(E)$, let $X_{\Phi}$ be a CM abelian variety of
type $(\mathcal{O}_E, \Phi)$. Then
\begin{align}\label{acc2}
\frac{1}{2^n}\sum_{\Phi \in \Phi(E)} h_{\mathrm{Fal}}(X_{\Phi}) = -\frac{1}{2}\frac{L^{\prime}(\chi_{E/F},0)}{L(\chi_{E/F},0)}
-\frac{1}{4}\log\left(\frac{|d_E|}{d_F}\right) - \frac{n}{2}\log(2\pi).
\end{align}
\end{conjecture}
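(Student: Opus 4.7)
The plan is to establish this identity by interpreting both sides geometrically on an appropriate integral Shimura variety and then applying arithmetic intersection theory. The key feature to exploit is that averaging over all CM types has a clean geometric meaning: the collection $\{X_\Phi\}_{\Phi \in \Phi(E)}$ reassembles the $2^n$ individual CM points into a single ``total'' CM $0$-cycle attached to the field $E$, and this symmetrization is precisely what causes the full Artin-theoretic expression on the right-hand side of Conjecture \ref{cc2} to collapse, via \cite[Proposition 8.4.1]{AGHM15}, to a single logarithmic derivative of the abelian Hecke $L$-function $L(\chi_{E/F}, s)$. Without this algebraic collapse there is no natural Shimura variety whose arithmetic intersection theory would naturally produce the sum over Artin $L'/L$ contributions, so this reduction is the pivotal bridge between analysis and geometry.

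First I would realize each CM abelian variety $X_\Phi$ as a special point on an integral model $\mathcal{M}$ of an appropriate GSpin or unitary Shimura variety of signature matching the CM datum, chosen so that the metrized Hodge line bundle $\widehat{\omega}$ computes the Faltings height: up to explicit archimedean constants, $h_{\mathrm{Fal}}(X_\Phi)$ should equal the arithmetic degree $\widehat{\deg}(\widehat{\omega}|_{X_\Phi})$. Summing over $\Phi$ then assembles the individual points into a CM $0$-cycle $\mathcal{Z}_E$, and the left-hand side becomes, after normalization, the arithmetic intersection number $\widehat{\omega} \cdot \mathcal{Z}_E$. The next step is to invoke the arithmetic Siegel--Weil program of Kudla--Rapoport--Yang: special cycles on $\mathcal{M}$ arise as Fourier coefficients of modular generating series, and the pairing $\widehat{\omega} \cdot \mathcal{Z}_E$ is extracted from the central derivative at $s=0$ of an incoherent Eisenstein series. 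By a Siegel--Weil-type calculation this central derivative factors as $L'(\chi_{E/F},0)/L(\chi_{E/F},0)$ together with a discriminant term producing $\log(|d_E|/d_F)$ and local archimedean contributions supplying the $\frac{n}{2}\log(2\pi)$. This is the strategy executed in \cite{AGHM15}; the approach of \cite{YZ15} proceeds in parallel using quaternionic Shimura varieties over the totally real base $F$ combined with the Jacquet--Langlands correspondence.

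The hardest step, and the main technical obstacle, is establishing the arithmetic Siegel--Weil identity at every finite place of bad reduction: one must compute local arithmetic intersection multiplicities on the integral model (via Rapoport--Zink spaces or their GSpin analogues) and match them term-by-term with derivatives of local Whittaker functions on singular Hermitian $2\times 2$ matrices. This requires delicate deformation-theoretic input, a careful matching of archimedean Green functions with the non-holomorphic part of the Eisenstein series, and uniform control over ramified primes where the integral model is not smooth. Everything else, once the framework is in place, is comparatively formal: the collapse to a single quadratic $L$-function via \cite[Proposition 8.4.1]{AGHM15} is an elementary but essential character-theoretic identity, and the archimedean Chowla--Selberg-type computation reproducing the $\log(2\pi)$ and discriminant terms is standard once the correct metric on $\widehat{\omega}$ has been fixed.
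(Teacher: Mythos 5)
The paper does not actually prove this statement. Conjecture \ref{acc3} is stated, and immediately afterward Theorem \ref{howard} records that it is a theorem, citing the independent proofs of Andreatta--Goren--Howard--Madapusi Pera \cite{AGHM15} and Yuan--Zhang \cite{YZ15}; the only argument the paper itself supplies is the derivation of the \emph{shape} of the right-hand side of (\ref{acc2}), obtained by averaging Conjecture \ref{cc2} over $\Phi(E)$ and applying \cite[Proposition 8.4.1]{AGHM15} to collapse the Artin $L$-function terms to a single logarithmic derivative of $L(\chi_{E/F},s)$. Your proposal, by contrast, is an attempt to reconstruct the content of the external proofs, which lies far outside the scope of what this paper does.

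As a summary of the \cite{AGHM15} and \cite{YZ15} strategy, your sketch identifies the correct conceptual ingredients: the arithmetic Siegel--Weil philosophy, special cycles and their generating series, the central derivative of an incoherent Eisenstein series, local intersection computations at primes of bad reduction, and the pivotal role of the character-theoretic collapse via \cite[Proposition 8.4.1]{AGHM15}. A few details are off. Andreatta--Goren--Howard--Madapusi Pera work on GSpin (orthogonal) Shimura varieties attached to a quadratic space constructed from the CM data, not unitary Shimura varieties; the relevant Whittaker/Eisenstein analysis is in the genus-one metaplectic setting rather than on ``Hermitian $2\times 2$ matrices.'' The Yuan--Zhang proof does proceed via quaternionic Shimura curves over $F$, as you indicate. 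But to be clear about what is being compared: the paper treats Theorem \ref{howard} as a black box input to its own results (Theorems \ref{maintheorem} and \ref{weyl}), and offers no proof of it; so there is no internal argument for your proposal to match against.
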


Conjecture \ref{acc3} was recently proved independently by Andreatta-Goren-Howard-Madapusi Pera \cite{AGHM15} and Yuan-Zhang \cite{YZ15}.

\begin{theorem}[\cite{AGHM15}, \cite{YZ15}]\label{howard} Conjecture \ref{acc3} is true.
\end{theorem}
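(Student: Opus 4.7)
The plan is to realize the sum of Faltings heights as an arithmetic intersection number on a Shimura variety, and then compute that intersection number via an arithmetic Siegel--Weil formula whose output is precisely the derivative of an Eisenstein series that matches the logarithmic derivative of $L(\chi_{E/F},s)$ at $s=0$. The broad philosophy is that the average over $\Phi \in \Phi(E)$ symmetrizes the CM datum enough to kill the representation-theoretic complexity present in the individual Colmez conjecture, leaving a quantity that is naturally expressible via the quadratic character $\chi_{E/F}$.

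First I would fix an integral model $\mathcal{M}$ of an appropriate Shimura variety: either a GSpin Shimura variety attached to the quadratic space $(V,Q)$ where $V = E$ and $Q = \mathrm{Nm}_{E/F}$ (this is the AGHM approach), or, alternatively, a Shimura curve style model in the spirit of Yuan--Zhang. On this integral model one has the tautological metrized Hodge line bundle $\widehat{\omega}$, and the key identity to establish is that the arithmetic degree $\widehat{\deg}(\widehat{\omega}\cdot \mathcal{Z}_E)$ against the CM cycle $\mathcal{Z}_E$ cut out by $E$-multiplication is, up to an explicit constant involving $\log |d_E/d_F|$ and $\tfrac{n}{2}\log(2\pi)$, the average $2^{-n}\sum_\Phi h_{\mathrm{Fal}}(X_\Phi)$. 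This step uses that every point of $\mathcal{Z}_E(\overline{\Q})$ corresponds to some pair $(\mathcal{O}_E,\Phi)$ and that the pullback of $\widehat\omega$ to such a point computes the Faltings height of $X_\Phi$.

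Next I would compute this arithmetic degree intrinsically. The strategy is to view $\mathcal{Z}_E$ as a union of (or limit of) Kudla--Rapoport special divisors and apply an arithmetic Siegel--Weil identity, identifying $\widehat{\deg}(\widehat\omega \cdot \mathcal{Z}_E)$ with the central derivative of the $F$-valued Eisenstein series $E'(0,g_\infty, \chi_{E/F})$. Unfolding this Eisenstein derivative and comparing with the standard factorization of $L(\chi_{E/F},s)$ then yields $-\tfrac{1}{2}L'(\chi_{E/F},0)/L(\chi_{E/F},0)$ on the nose, together with the expected $\log |d_E/d_F|$ and archimedean $\log(2\pi)$ contributions. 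The matching of archimedean Green functions with the Gamma-function part of the Faltings height is the bridge that turns the right-hand side of \eqref{acc2} into the right-hand side.

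The hard part will be the arithmetic intersection step: one must show that $\mathcal{M}$ and $\mathcal{Z}_E$ admit integral models flat enough that the global arithmetic degree is well-defined, control the contributions from places of bad reduction (including primes where $E/F$ is ramified and primes dividing the level structure), and precisely match the archimedean star product against the Faltings integrals $\int_{X^\sigma_\Phi(\C)} \omega^\sigma \wedge \overline{\omega^\sigma}$. Equally delicate is the local Siegel--Weil computation at ramified and dyadic primes, where the Whittaker functions and their derivatives must be handled by hand; it is this local analysis that determines the constant $\tfrac{1}{4}\log(|d_E|/d_F)$ and rules out any stray rational multiple of $\log(2)$. Once these local inputs are in place, the global identity of Conjecture \ref{acc3} follows by summing.
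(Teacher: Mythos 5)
The statement you were asked to prove is not proved in the paper at all: Theorem~\ref{howard} is cited directly from \cite{AGHM15} and \cite{YZ15} and then used as a black box throughout. So there is no internal proof to compare against; the ``proof'' in the paper is the citation itself.

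That said, your sketch is a reasonable high-level description of the Andreatta--Goren--Howard--Madapusi Pera route: realize the averaged Faltings height as the arithmetic degree of the metrized Hodge bundle against a big CM cycle on an integral model of a GSpin Shimura variety, and compute that degree via Kudla's program by matching it with the central derivative of an incoherent Eisenstein series. But you conflate this with the Yuan--Zhang proof, which is genuinely different: they work with quaternionic Shimura varieties (not ``Shimura-curve style'' in general), and the central computation runs through the arithmetic Hodge index theorem and the Gross--Zagier framework of Yuan--Zhang--Zhang, rather than through Kudla--Rapoport special divisors and arithmetic Siegel--Weil directly. Also, the heuristic that the average over $\Phi$ ``kills the representation-theoretic complexity'' has a precise avatar in this paper: it is the identity \eqref{Howard}, quoted from \cite[Proposition 8.4.1]{AGHM15}, which shows that $\tfrac{1}{2^n}\sum_{\Phi}A^0_{E,\Phi}$ collapses onto the trivial character and $\chi_{E/F}$ alone; that algebraic computation is logically prior to and independent of the geometric input, and your sketch does not isolate it. Finally, a proposal at this level of generality cannot substitute for the actual proofs (including the delicate local analysis at bad, ramified, and dyadic primes, and the eventual removal of a potential $\log 2$ ambiguity), which is precisely why the paper treats the theorem as external input rather than reproving it.
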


\section{The action of $G^{\CM}$ on $\Phi(E)$ and the Colmez conjecture}

In this section we prove the following result.

\begin{proposition}\label{cc} Let $F$ be a totally real number field of degree $n$. Let $E/F$ be a CM extension of $F$ and
$\Phi(E)$ be the set of CM types for $E$. If the action of $G^{\CM}$ on $\Phi(E)$ is transitive,
then Conjecture \ref{cc2} is true. In particular, if $\Phi \in \Phi(E)$ and
$X_{\Phi}$ is a CM abelian variety of type $(\mathcal{O}_E, \Phi)$, then
\begin{align}\label{FHid}
h_{\mathrm{Fal}}(X_{\Phi})= -\frac{1}{2}\frac{L^{\prime}(\chi_{E/F},0)}{L(\chi_{E/F},0)}
-\frac{1}{4}\log\left(\frac{|d_E|}{d_F}\right) - \frac{n}{2}\log(2\pi).
\end{align}
\end{proposition}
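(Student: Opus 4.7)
The strategy is to reduce the Colmez conjecture for $E$ under the transitivity hypothesis to the averaged Colmez conjecture (Theorem \ref{howard}), by showing that the Faltings height $h_{\mathrm{Fal}}(X_{\Phi})$ is constant on $G^{\CM}$-orbits of $\Phi(E)$. Once this is established, transitivity forces $h_{\mathrm{Fal}}(X_{\Phi})$ to take the same value for every $\Phi \in \Phi(E)$, so each individual value coincides with the average, which Theorem \ref{howard} evaluates to the right-hand side of \eqref{FHid}.

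First, I would prove the orbit-invariance statement: if $\Phi, \Phi^{\prime} \in \Phi(E)$ are equivalent CM types, then $h_{\mathrm{Fal}}(X_{\Phi}) = h_{\mathrm{Fal}}(X_{\Phi^{\prime}})$. The natural tool is Colmez's unconditional theorem \cite[Th\'eor\`eme 0.3]{Col93}, which expresses $h_{\mathrm{Fal}}(X_{\Phi})$ as the evaluation of a certain ``height'' functional on the locally constant function $A_{E,\Phi}^{0} \in H^{0}(G^{\CM}, \overline{\Q})$ built from $A_{\Phi} = \Phi \ast \Phi^{\vee}$. Using the functional description of CM types (Definition \ref{FunctionalCM}) and Proposition \ref{dictionary}, one checks that if $\Phi^{\prime} = \tau \cdot \Phi$ for some $\tau \in G^{\CM}$, then $A_{\Phi^{\prime}} = (\tau \cdot \Phi) \ast (\tau \cdot \Phi)^{\vee}$ is a $G^{\CM}$-translate of $A_{\Phi}$; after taking the class-function projection this translate becomes identical to $A_{\Phi}^{0}$, since conjugation-averaging over $G^{\CM}$ kills the translate. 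Hence $A_{E,\Phi^{\prime}}^{0} = A_{E,\Phi}^{0}$, and Colmez's formula yields $h_{\mathrm{Fal}}(X_{\Phi^{\prime}}) = h_{\mathrm{Fal}}(X_{\Phi})$.

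Next, under the hypothesis that $G^{\CM}$ acts transitively on $\Phi(E)$, there is a single orbit, so the previous step gives a common value $h_{0} := h_{\mathrm{Fal}}(X_{\Phi})$ for all $\Phi \in \Phi(E)$. Substituting into the left-hand side of \eqref{acc2} collapses the average and gives
\begin{equation*}
h_{0} \;=\; \frac{1}{2^{n}} \sum_{\Phi \in \Phi(E)} h_{\mathrm{Fal}}(X_{\Phi}).
\end{equation*}
Applying Theorem \ref{howard} (the average Colmez conjecture of Andreatta--Goren--Howard--Madapusi Pera and Yuan--Zhang) identifies this with
\begin{equation*}
-\frac{1}{2}\frac{L^{\prime}(\chi_{E/F},0)}{L(\chi_{E/F},0)} - \frac{1}{4}\log\!\left(\frac{|d_{E}|}{d_{F}}\right) - \frac{n}{2}\log(2\pi),
\end{equation*}
which is exactly \eqref{FHid}. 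Finally, to obtain Conjecture \ref{cc2} itself, one invokes the identity \eqref{Howard} from \cite[Proposition 8.4.1]{AGHM15}, which shows that the same quantity equals $-Z(A_{E,\Phi}^{0},0) - \tfrac{1}{2}\mu_{\mathrm{Art}}(A_{E,\Phi}^{0})$ for every $\Phi$ (the summands on the right of \eqref{acc1} are themselves orbit-invariant, by the same argument).

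The main obstacle is the first step: one has to verify carefully that $A_{E,\Phi}^{0}$ is genuinely a $G^{\CM}$-invariant of the orbit, which amounts to confirming that conjugation on $G^{\CM}$ absorbs the translation by $\tau$. This is a bookkeeping check using the definitions of the convolution, of $f \mapsto f^{\vee}$, and of the projection $f \mapsto f^{0}$, together with bi-invariance of the normalized Haar measure $\mu$; once it is done, the remainder of the argument is formal.
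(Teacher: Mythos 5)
Your proposal follows essentially the same route as the paper: both establish that the class-function projection $A_{E,\Phi}^{0}$ (and hence, by Colmez's Th\'eor\`eme 0.3, the Faltings height) is invariant on $G^{\CM}$-orbits of CM types, then invoke the averaged Colmez conjecture of \cite{AGHM15, YZ15} to identify the common value with the right-hand side of \eqref{acc2}. One small terminological caution worth fixing when you carry out the ``bookkeeping check'': $A_{\tau\cdot\Phi}$ is the \emph{conjugate} $g\mapsto A_{\Phi}(\tau^{-1}g\tau)$, not a translate of $A_{\Phi}$ --- though your appeal to conjugation-averaging (the map $f\mapsto f^{0}$) to absorb it shows you have the correct mechanism in mind.
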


We will need the following two crucial lemmas.

\begin{lemma}\label{AzeroLemma}
If $\Phi_1, \Phi_2 \in \CM$ are equivalent CM types, then $A_{\Phi_1}^0 = A_{\Phi_2}^0$.
\end{lemma}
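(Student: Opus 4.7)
The plan is to show that equivalence of CM types causes $A_{\Phi_1}$ and $A_{\Phi_2}$ to differ only by a conjugation of the argument, which is then killed by the class-function projection.

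First I would unpack the equivalence. By Definition \ref{FunctionalCM}, there exists $\tau \in G^{\CM}$ with $\Phi_1(g) = \Phi_2(\tau^{-1}g)$ for every $g \in G^{\CM}$. A short calculation shows that the reflex function transforms dually:
\begin{align*}
\Phi_1^{\vee}(g) = \overline{\Phi_1(g^{-1})} = \overline{\Phi_2(\tau^{-1}g^{-1})} = \overline{\Phi_2((g\tau)^{-1})} = \Phi_2^{\vee}(g\tau).
\end{align*}
So the translation acts on the left of $\Phi$ and on the right of $\Phi^{\vee}$.

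Next I would compute $A_{\Phi_1} = \Phi_1 * \Phi_1^{\vee}$ directly. By the two formulas above,
\begin{align*}
A_{\Phi_1}(g) = \int_{G^{\CM}} \Phi_2(\tau^{-1}h)\, \Phi_2^{\vee}(h^{-1}g\tau)\, d\mu(h).
\end{align*}
Substituting $h = \tau k$ and using left-invariance of Haar measure, the integrand becomes $\Phi_2(k) \Phi_2^{\vee}(k^{-1}\tau^{-1}g\tau)$, and therefore
\begin{align*}
A_{\Phi_1}(g) = A_{\Phi_2}(\tau^{-1} g \tau).
\end{align*}
In other words, $A_{\Phi_1}$ is obtained from $A_{\Phi_2}$ by conjugating the argument by $\tau$.

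Finally I would apply the projection $f \mapsto f^0$. From the definition,
\begin{align*}
A_{\Phi_1}^0(g) = \int_{G^{\CM}} A_{\Phi_2}(\tau^{-1}h g h^{-1}\tau)\, d\mu(h),
\end{align*}
and the substitution $k = \tau^{-1}h$ (so that $h^{-1}\tau = k^{-1}$) together with left-invariance of $\mu$ transforms the integrand into $A_{\Phi_2}(kgk^{-1})$, whence $A_{\Phi_1}^0(g) = A_{\Phi_2}^0(g)$ for every $g$. The only real subtlety is keeping track of the left/right translation asymmetry in step one and invoking left-invariance of $\mu$ twice; no deep input is needed beyond the definitions in Section \ref{colmezstatement} and the invariance properties of Haar measure on the compact group $G^{\CM}$.
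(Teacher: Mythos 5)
Your proof is correct and uses the same underlying ideas as the paper: the equivalence $\Phi_1 = \tau\cdot\Phi_2$ becomes a translation, which is absorbed by the (left- and right-) invariance of Haar measure on the compact group $G^{\CM}$ and by the conjugation-averaging in the projection $f \mapsto f^0$. The paper carries out these substitutions inside a single double integral for $A^0_{\Phi_1}(g)$, whereas you factor the argument more transparently through the intermediate identity $A_{\Phi_1}(g) = A_{\Phi_2}(\tau^{-1}g\tau)$, which makes the cancellation under the class-function projection immediate; this is a cleaner organization of essentially the same computation.
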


\begin{proof}
Since the CM types $\Phi_1$ and $\Phi_2$ are equivalent, there is an element
$\tau^{-1} \in G^{\CM}$ such that $\Phi_1(g) = \Phi_2(\tau g)$ for every $g \in G^{\CM}$. Then we have
\begin{align}\label{inner}
A_{\Phi_1}^0(g) &= \int_{G^{\CM}} A_{\Phi_1}(hgh^{-1}) \, d\mu(h) \notag \\
&= \int_{G^{\CM}} \int_{G^{\CM}} \Phi_1(t) \Phi_1^{\vee}(t^{-1}hgh^{-1}) \, d\mu(t) d\mu(h) \notag \\
&= \int_{G^{\CM}} \int_{G^{\CM}} \Phi_1(t) \Phi_1(hg^{-1}h^{-1} t) \, d\mu(h) d\mu(t) \notag \\
&= \int_{G^{\CM}} \int_{G^{\CM}} \Phi_2(\tau t) \Phi_2(\tau hg^{-1}h^{-1} t) \, d\mu(h) d\mu(t) \notag \\
&= \int_{G^{\CM}} \Phi_2(\tau t) \left( \int_{G^{\CM}}  \Phi_2(\tau hg^{-1}h^{-1} \tau^{-1} \tau t) \, d\mu(h) \right) d\mu(t).
\end{align}

Now, define the function $f_{g, \tau, t}(h):= \Phi_2 (hg^{-1} h^{-1} \tau t)$. Then the inner integral in (\ref{inner}) can be written as
\begin{align}\label{inner2}
 \int_{G^{\CM}}  \Phi_2(\tau hg^{-1}h^{-1} \tau^{-1} \tau t) \, d\mu(h)&=\int_{G^{\CM}}  f_{g, \tau, t}( \tau h) \, d\mu(h)\notag \\
&= \int_{G^{\CM}}  f_{g, \tau, t}( h) \, d\mu(h) \notag\\
&=\int_{G^{\CM}}  \Phi_2 (hg^{-1} h^{-1} \tau t) \, d\mu(h),
\end{align}
where in the second equality we used the left-invariance of the Haar measure. We substitute the identity (\ref{inner2}) for the
inner integral in (\ref{inner}) and continue the calculation to get
\begin{align*}
A_{\Phi_1}^0(g) &=  \int_{G^{\CM}} \Phi_2(\tau t) \left( \int_{G^{\CM}} \Phi_2 (hg^{-1} h^{-1} \tau t) \, d\mu(h) \right) d\mu(t) \\
&= \int_{G^{\CM}} \left( \int_{G^{\CM}}\Phi_2(\tau t)  \Phi_2 (hg^{-1} h^{-1} \tau t) \, d\mu(t) \right) d\mu(h) \\
&= \int_{G^{\CM}} \left( \int_{G^{\CM}}\Phi_2(t)  \Phi_2 (hg^{-1} h^{-1} t) \, d\mu(t) \right) d\mu(h)\\
&= \int_{G^{\CM}} \left( \int_{G^{\CM}}\Phi_2(t)  \Phi_2^{\vee} (t^{-1}hg h^{-1}) \, d\mu(t) \right) d\mu(h)\\
&=\int_{G^{\CM}} A_{\Phi_2}(hgh^{-1}) \, d\mu(h)\\
&= A_{\Phi_2}^0(g),
\end{align*}
where in the third equality we again used the left-invariance of the Haar measure.
\end{proof}

\begin{lemma}\label{htequality} Let $E$ be a CM field, let $\Phi_1$ and $\Phi_2$ be CM types for $E$, and let $X_{\Phi_1}$ and $X_{\Phi_2}$ be
CM abelian varieties of types $(\mathcal{O}_E, \Phi_1)$ and $(\mathcal{O}_E, \Phi_2)$, respectively. If $\Phi_1$ and $\Phi_2$ are equivalent,
then $$h_{\mathrm{Fal}}(X_{\Phi_1})=h_{\mathrm{Fal}}(X_{\Phi_2}).$$
\end{lemma}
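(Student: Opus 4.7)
The plan is to combine Lemma \ref{AzeroLemma} with Colmez's unconditional theorem \cite[Th\'eor\`eme 0.3]{Col93}, which asserts that there exists a height functional $\mathrm{ht}: H^0(G^{\CM}, \overline{\Q}) \longrightarrow \R$ such that for every CM pair $(E,\Phi)$ and every CM abelian variety $X_\Phi$ of type $(\mathcal{O}_E,\Phi)$,
\begin{align*}
h_{\mathrm{Fal}}(X_\Phi) = \mathrm{ht}(A_{E,\Phi}^0).
\end{align*}
This identity is what upgrades Conjecture \ref{cc2} to an actual theorem once one specifies $\mathrm{ht}$; for our present purposes the only input needed is that the right-hand side depends on $(E,\Phi)$ exclusively through the class function $A_{E,\Phi}^0$.

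First, I would observe that since $A_{E,\Phi} = [E:\Q]\,A_\Phi$ and projection onto $H^0(G^{\CM},\overline{\Q})$ is $\overline{\Q}$-linear, we have
\begin{align*}
A_{E,\Phi}^0 = [E:\Q]\,A_\Phi^0.
\end{align*}
Next, since $\Phi_1$ and $\Phi_2$ are equivalent CM types for $E$, Lemma \ref{AzeroLemma} yields $A_{\Phi_1}^0 = A_{\Phi_2}^0$, and therefore
\begin{align*}
A_{E,\Phi_1}^0 = [E:\Q]\,A_{\Phi_1}^0 = [E:\Q]\,A_{\Phi_2}^0 = A_{E,\Phi_2}^0.
\end{align*}
Applying Colmez's formula $h_{\mathrm{Fal}}(X_{\Phi_i}) = \mathrm{ht}(A_{E,\Phi_i}^0)$ for $i = 1, 2$ then immediately gives the desired equality.

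The only subtle point is the invocation of Colmez's theorem rather than Conjecture \ref{cc2}, since the latter is not known in general. What makes the argument work is that Th\'eor\`eme 0.3 of \cite{Col93} expresses $h_{\mathrm{Fal}}(X_\Phi)$ as a functional evaluated on $A_{E,\Phi}^0$ without requiring the explicit formula in terms of $Z$ and $\mu_{\mathrm{Art}}$; the equivalence of $\Phi_1$ and $\Phi_2$ feeds into this functional only through the projected class function, which by Lemma \ref{AzeroLemma} is an invariant of the $G^{\CM}$-orbit. I anticipate that the main subtlety, if any, lies in correctly recording the normalization and the passage from $A_\Phi^0$ to $A_{E,\Phi}^0$; the rest of the argument is a direct substitution.
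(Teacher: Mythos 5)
Your proof is correct and follows essentially the same route as the paper: both invoke Colmez's Th\'eor\`eme 0.3 to express $h_{\mathrm{Fal}}(X_\Phi)$ as a quantity depending on $(E,\Phi)$ only through the class function $A_{E,\Phi}^0$, then reduce to Lemma \ref{AzeroLemma} via the linear relation $A_{E,\Phi}^0 = [E:\Q]A_\Phi^0$. The only cosmetic difference is that the paper records Colmez's identity as $h_{\mathrm{Fal}}(X_\Phi) = -\mathrm{ht}(A_{E,\Phi}^0) - \tfrac{1}{2}\mu_{\mathrm{Art}}(A_{E,\Phi}^0)$ rather than absorbing the $\mu_{\mathrm{Art}}$ term into the height functional, but as you correctly observe this does not affect the argument since both terms depend on $\Phi$ only through $A_{E,\Phi}^0$.
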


\begin{proof} Let $X_{\Phi}$ be a CM abelian variety of type $(\mathcal{O}_E, \Phi)$. Then by Colmez \cite[Th\'eoreme 0.3]{Col93},
there is a unique $\Q$-linear height function $ \textrm{ht}: H^{0}(G^{\CM}, \overline{\Q}) \rightarrow \R$
such that
\begin{align}\label{ht}
h_{\mathrm{Fal}}(X_{\Phi}) = -\textrm{ht}(A_{E, \Phi}^{0}) -\frac{1}{2}\mu_{\textrm{Art}}(A_{E, \Phi}^{0}).
\end{align}
Since $\Phi_1$ and $\Phi_2$ are equivalent, by Lemma \ref{AzeroLemma} we have $A_{\Phi_1}^{0}=A_{\Phi_2}^{0}$, so that
\begin{align*}
A_{E, \Phi_1}^{0} = [E:\Q]A_{\Phi_1}^{0} = [E:\Q]A_{\Phi_2}^{0} = A_{E, \Phi_2}^{0}.
\end{align*}
It follows from (\ref{ht})
that $h_{\mathrm{Fal}}(X_{\Phi_1})=h_{\mathrm{Fal}}(X_{\Phi_2})$.
\end{proof}
\vspace{0.05in}

\textbf{Proof of Proposition \ref{cc}}. Fix a CM type $\Phi_0 \in \Phi(E)$,
and let $X_{\Phi_0}$ be a CM abelian variety of type $(\mathcal{O}_E, \Phi_{0})$.
Since the action of $G^{\CM}$ on $\Phi(E)$ is transitive, we have
\begin{align}\label{middle}
h_{\textrm{Fal}}(X_{\Phi_0}) &= \frac{1}{2^n}\sum_{\Phi \in \Phi(E)}h_{\textrm{Fal}}(X_{\Phi})\notag\\
&=\frac{1}{2^n}\sum_{\Phi \in \Phi(E)}(- Z(A_{E, \Phi}^{0},0) - \frac{1}{2}\mu_{\mathrm{Art}}(A_{E, \Phi}^{0}))\\
&=- Z(A_{E, \Phi_{0}}^{0},0) - \frac{1}{2}\mu_{\mathrm{Art}}(A_{E, \Phi_{0}}^{0}),\notag
\end{align}
where the first equality follows from Lemma  \ref{htequality}, the second equality is the identity
(\ref{acc1}) (which is equivalent to Theorem \ref{howard}),
and the third equality follows from Lemma \ref{AzeroLemma}. Since $\Phi_0$ was
arbitrary, this proves Conjecture \ref{cc2}. The identity (\ref{FHid}) for the Faltings height then
follows from (\ref{middle}) and (\ref{Howard}). \qed

\section{The action of $G^{\CM}$ on $\Phi(E)$ and the reflex degree}

In this section we relate the action of $G^{\CM}$ on $\Phi(E)$ to the degree of the
reflex field of a CM pair $(E,\Phi)$.

Let $G_{\Q}:=\textrm{Gal}(\overline{\Q}/\Q)$ be the absolute Galois group.
The following result can be found in \cite[Proposition 1.16]{Mil06} and \cite[Proposition 28]{Shi98}, for example.

\begin{proposition}\label{ReflexField}
Let $E$ be a CM field and $\Phi$ be a CM type for $E$. Then the following conditions on a subfield
$E_{\Phi}$ of $\overline{\Q}$ are equivalent.
\begin{itemize}
\item[(i)] We have
\begin{align*}
\{ \sigma \in G_{\Q} \suchthat \sigma \, \text{fixes $E_{\Phi}$} \}
= \{ \sigma \in G_{\Q} \suchthat \sigma \Phi = \Phi \},
\end{align*}
that is, $\displaystyle{\op{Gal}(\overline{\Q}/E_{\Phi}) = \op{Stab}_{G_{\Q}}(\Phi)}$.

\item[(ii)] $\displaystyle{E_{\Phi}= \Q \left( \left \{ \op{Tr}_{\Phi}(a) \suchthat a \in E  \right\} \right) }$,
where $\displaystyle{ \op{Tr}_{\Phi}(a):= \sum_{\phi \in \Phi} \phi(a) }$ is the type trace of $a \in E$.
\end{itemize}
\end{proposition}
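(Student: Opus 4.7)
The plan is to show that both (i) and (ii) characterize the same subfield of $\overline{\Q}$. Set $H := \op{Stab}_{G_\Q}(\Phi)$; this is an open subgroup of $G_\Q$ since $\Phi$ is a finite subset of $\op{Hom}(E,\overline{\Q})$ and the $G_\Q$-action on this finite set factors through $\op{Gal}(E^s/\Q)$. By the Galois correspondence, $H$ determines a unique subfield $L \subset \overline{\Q}$ with $\op{Gal}(\overline{\Q}/L) = H$, so condition (i) pins down $E_\Phi = L$. Writing $T := \Q(\{\op{Tr}_\Phi(a) : a \in E\})$ for the field in (ii), it then suffices to prove $L = T$.

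For the easy inclusion $T \subseteq L$, note that for any $\sigma \in G_\Q$ and any $a \in E$,
\[
\sigma(\op{Tr}_\Phi(a)) \;=\; \sum_{\phi \in \Phi}(\sigma \circ \phi)(a) \;=\; \op{Tr}_{\sigma\Phi}(a).
\]
In particular, if $\sigma \in H$ then $\sigma\Phi = \Phi$ as sets of embeddings, so $\sigma$ fixes every type trace $\op{Tr}_\Phi(a)$, and hence fixes all of $T$ pointwise. By the Galois correspondence, $T \subseteq L$.

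For the reverse inclusion $L \subseteq T$, I would show $\op{Gal}(\overline{\Q}/T) \subseteq H$. If $\sigma \in G_\Q$ fixes $T$ pointwise, then the identity $\sigma(\op{Tr}_\Phi(a)) = \op{Tr}_{\sigma\Phi}(a)$ forces $\op{Tr}_{\sigma\Phi}(a) = \op{Tr}_\Phi(a)$ for all $a \in E$, which rearranges to
\[
\sum_{\phi \in \Phi \setminus \sigma\Phi}\phi(a) \;=\; \sum_{\psi \in \sigma\Phi \setminus \Phi}\psi(a) \qquad \text{for every } a \in E.
\]
Since the embeddings $E \hookrightarrow \overline{\Q}$ restrict to pairwise distinct characters $E^\times \to \overline{\Q}^\times$, Artin's theorem on linear independence of characters forces each symmetric difference to be empty, whence $\sigma\Phi = \Phi$ and $\sigma \in H$.

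The main (and really only) obstacle is this second containment: one must recognize that preservation of the single element $\op{Tr}_\Phi(a) \in \overline{\Q}$ for every $a \in E$ carries exactly as much information as the set-theoretic equality $\sigma\Phi = \Phi$. Linear independence of characters is precisely the tool that converts the additive identity between type traces into a combinatorial identity between CM types, and without it one would only get the weaker statement that $\sigma$ preserves $\Phi$ up to a character-theoretic cancellation.
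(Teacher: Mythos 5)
Your proof is correct. The paper itself does not prove this proposition but refers the reader to Milne's CM notes and Shimura's book; your argument --- pinning down the stabilizer field $L$ via the Galois correspondence, getting $T\subseteq L$ from $\sigma(\op{Tr}_\Phi(a))=\op{Tr}_{\sigma\Phi}(a)$, and then using Dedekind's linear independence of characters to convert $\op{Tr}_{\sigma\Phi}=\op{Tr}_\Phi$ into the set equality $\sigma\Phi=\Phi$ for the reverse inclusion --- is precisely the standard proof appearing in those references.
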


\begin{definition}
The field $E_{\Phi}$ satisfying the equivalent conditions in Proposition \ref{ReflexField}
is called the \textit{reflex field} of the CM pair $(E, \Phi)$.
\end{definition}

Let $E^s$ denote the Galois closure of $E$.

\begin{proposition}\label{orbit}
Let $E$ be a CM field of degree $2n$ and $\Phi$ be a CM type for $E$. Then
\begin{align*}
[E_{\Phi} : \Q] = \# (\op{Gal}(E^{s}/\Q) \cdot \Phi).
\end{align*}
In particular, $[E_{\Phi}:\Q] \leq 2^n$.
\end{proposition}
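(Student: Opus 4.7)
The plan is to combine the characterization of the reflex field in Proposition \ref{ReflexField}(i) with the orbit-stabilizer theorem, after observing that the $G_{\Q}$-action on $\Phi(E)$ factors through $\mathrm{Gal}(E^{s}/\Q)$.

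First I would invoke Proposition \ref{ReflexField}(i), which says $\mathrm{Gal}(\overline{\Q}/E_{\Phi}) = \mathrm{Stab}_{G_{\Q}}(\Phi)$. Since $E_{\Phi}/\Q$ is a finite extension (in fact contained in $E^{s}$), we may apply orbit-stabilizer to conclude
\begin{equation*}
[E_{\Phi} : \Q] \;=\; [G_{\Q} : \mathrm{Stab}_{G_{\Q}}(\Phi)] \;=\; \#(G_{\Q} \cdot \Phi).
\end{equation*}
So it remains to identify the $G_{\Q}$-orbit of $\Phi$ with the $\mathrm{Gal}(E^{s}/\Q)$-orbit.

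Next I would argue that the $G_{\Q}$-action on $\Phi(E)$ factors through the restriction map $G_{\Q} \twoheadrightarrow \mathrm{Gal}(E^{s}/\Q)$. The point is that every $\phi \in \Phi$ is an embedding of $E$ into $\overline{\Q}$ whose image lies inside the Galois closure $E^{s}$. Hence if $\sigma \in \mathrm{Gal}(\overline{\Q}/E^{s})$, then $\sigma$ fixes $\phi(E) \subseteq E^{s}$ pointwise, which gives $\sigma\phi = \phi$ for every $\phi \in \Phi$ and therefore $\sigma \cdot \Phi = \Phi$. Thus $\mathrm{Gal}(\overline{\Q}/E^{s}) \subseteq \mathrm{Stab}_{G_{\Q}}(\Phi)$, and the induced action of $\mathrm{Gal}(E^{s}/\Q) = G_{\Q}/\mathrm{Gal}(\overline{\Q}/E^{s})$ on $\Phi(E)$ has the same orbit through $\Phi$ as the original $G_{\Q}$-action. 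Combining the two displays gives $[E_{\Phi} : \Q] = \#(\mathrm{Gal}(E^{s}/\Q) \cdot \Phi)$.

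For the final inequality, I would just note that $\mathrm{Gal}(E^{s}/\Q) \cdot \Phi \subseteq \Phi(E)$ and that $\#\Phi(E) = 2^{n}$, since a CM type is specified by choosing, for each of the $n$ conjugate pairs $\{\phi, c\phi\} \subset \mathrm{Hom}(E, \overline{\Q})$, one of the two elements. None of the steps is really an obstacle; the only thing to be slightly careful about is the factoring-through argument, which relies on the standard fact that $E^{s}$ contains every $\overline{\Q}$-embedding of $E$, so that $\mathrm{Gal}(\overline{\Q}/E^{s})$ acts trivially on $\mathrm{Hom}(E, \overline{\Q})$ and hence on $\Phi(E)$.
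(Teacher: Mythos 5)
Your proof is correct and follows essentially the same route as the paper: both invoke Proposition \ref{ReflexField}(i) to identify the stabilizer of $\Phi$ with a Galois group, apply the orbit–stabilizer theorem, and finish by bounding the orbit inside $\Phi(E)$ with $\#\Phi(E)=2^n$. The only cosmetic difference is that you run orbit–stabilizer in $G_{\Q}$ and then observe the action factors through $\mathrm{Gal}(E^s/\Q)$, whereas the paper first notes $E_{\Phi}\subseteq E^s$ (via Proposition \ref{ReflexField}(ii)) to pass to the finite quotient and applies orbit–stabilizer there; these are two phrasings of the same reduction, both resting on the fact that $\mathrm{Gal}(\overline{\Q}/E^s)$ stabilizes $\Phi$.
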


\begin{proof} By  Proposition \ref{ReflexField} (ii) we have $E_{\Phi} \subseteq E^{s}$, hence one can replace
$\overline{\Q}$ with $E^s$ and $G_{\Q}$ with $\Gal(E^s/\Q)$ in Proposition \ref{ReflexField} (i) to conclude that
\begin{align}\label{ReflexStab}
\op{Gal}(E^{s}/E_{\Phi}) = \op{Stab}_{\op{Gal}(E^{s}/\Q)}(\Phi).
\end{align}
Then using the fundamental theorem of Galois theory, identity (\ref{ReflexStab}), and the orbit-stabilizer theorem, we have
\begin{align*}
[E_{\Phi} : \Q] = [\op{Gal}(E^{s}/\Q): \op{Gal}(E^{s}/E_{\Phi})] = [\op{Gal}(E^{s}/\Q):  \op{Stab}_{\op{Gal}(E^{s}/\Q)}(\Phi)]
= \# (\op{Gal}(E^{s}/\Q) \cdot \Phi).
\end{align*}
Finally, since $\op{Gal}(E^{s}/\Q) \cdot \Phi \subseteq \Phi(E)$ and $\#\Phi(E) = 2^n$, it follows that $[E_{\Phi}:\Q] \leq 2^n$.
\end{proof}

\begin{corollary}\label{orbit2}
The action of $G^{\CM}$ on $\Phi(E)$ is transitive if and only if $[E_{\Phi}:\Q] = 2^n$ for some CM type $\Phi \in \Phi(E)$.
\end{corollary}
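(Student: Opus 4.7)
The plan is to reduce both implications to a direct orbit-counting argument via Proposition \ref{orbit}, after first observing that the $G^{\CM}$-action on $\Phi(E)$ factors through the finite quotient $\Gal(E^s/\Q)$.

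First I would record the reduction. Since $E$ is a CM field, its Galois closure $E^s$ is also a CM field (as cited in the proof of Lemma \ref{CMFieldE}), so $E^s \subseteq \Q^{\CM}$ and restriction gives a surjection $G^{\CM} \twoheadrightarrow \Gal(E^s/\Q)$. Any embedding $\sigma \colon E \hookrightarrow \overline{\Q}$ has image in $E^s$, and since $E^s$ is Galois over $\Q$ it is stable under every $\tau \in G^{\CM}$; thus $\tau\sigma$ depends only on the restriction $\tau|_{E^s}$. Consequently, for any $\Phi \in \Phi(E)$,
\begin{equation*}
G^{\CM}\cdot \Phi \;=\; \Gal(E^s/\Q)\cdot \Phi.
\end{equation*}

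Given this, both directions follow from Proposition \ref{orbit}. For the forward direction, suppose $G^{\CM}$ acts transitively on $\Phi(E)$. Fix any CM type $\Phi\in\Phi(E)$; then its $G^{\CM}$-orbit, and hence its $\Gal(E^s/\Q)$-orbit, equals all of $\Phi(E)$, which has cardinality $2^n$. By Proposition \ref{orbit} we obtain $[E_{\Phi}:\Q] = \#(\Gal(E^s/\Q)\cdot\Phi) = 2^n$.

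For the converse, suppose there exists $\Phi \in \Phi(E)$ with $[E_{\Phi}:\Q] = 2^n$. Proposition \ref{orbit} gives $\#(\Gal(E^s/\Q)\cdot\Phi) = 2^n = \#\Phi(E)$, and since $\Gal(E^s/\Q)\cdot\Phi \subseteq \Phi(E)$, equality of sets follows. Combined with the reduction above, this shows $G^{\CM}\cdot\Phi = \Phi(E)$, so the action is transitive.

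The argument is essentially formal; the only point requiring care is the factoring of the action through $\Gal(E^s/\Q)$, which rests on the fact that $E^s$ is a CM field and hence lies in $\Q^{\CM}$. There is no serious obstacle, and no auxiliary results beyond Proposition \ref{orbit} and the orbit-stabilizer count of $\#\Phi(E) = 2^n$ are needed.
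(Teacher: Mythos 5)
Your proof is correct and follows the same route as the paper: establish $G^{\CM}\cdot\Phi = \Gal(E^s/\Q)\cdot\Phi$ using that $E^s$ is a CM field, then invoke Proposition \ref{orbit} together with $\#\Phi(E) = 2^n$. You simply spell out both implications more explicitly than the paper's very terse two-sentence argument.
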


\begin{proof} Since $E^s$ is a CM field, we have
\begin{align*}
\Gal(E^s/\Q) \cdot \Phi = G^{\CM} \cdot \Phi.
\end{align*}
The result now follows from Proposition \ref{orbit} and the fact that $\#\Phi(E) =2^n$.
\end{proof}

\section{CM fields with reflex fields of maximal degree}\label{reflexsection}

Let $F$ be a totally real number field of degree $n$. In the paragraph following \cite[(1.10.1)]{Shi70},
Shimura briefly sketched the construction of a CM extension $E/F$ with reflex fields of maximal degree.
Based on this idea, we undertake an extensive study of the problem of constructing CM fields
with reflex fields of maximal degree and explicitly construct
infinite families of CM extensions $E/F$ with this property. When $n \geq 3$ these CM fields $E$ are non-Galois over $\Q$.

We begin with the following facts and notation which will be needed for the results in this section.

\subsection{Multiplicative congruences, ray class groups, and higher unit groups}

Let $K$ be a number field. For a prime ideal $\mathfrak{P}$ of $K$,
let $v_{\mathfrak{P}}: K \longrightarrow \Z \cup \{ \infty \}$ be the discrete valuation defined by $v_{\mathfrak{P}}(x):= \op{ord}_{\mathfrak{P}}(x)$.
Also, let $K_{\mathfrak{P}}$ be the completion of $K$ with respect to the $\mathfrak{P}$-adic absolute value
$|\cdot|_{\mathfrak{P}}$ induced by the valuation $v_{\mathfrak{P}}$. We denote the ring of $\mathfrak{P}$-adic integers by $
\mathcal{O}_{\mathfrak{P}}$. The unique maximal ideal of $\mathcal{O}_{\mathfrak{P}}$ is $\widehat{\mathfrak{P}} := \mathfrak{P} \mathcal{O}_{\mathfrak{P}}$.

Let $U:= \mathcal{O}_{\mathfrak{P}}^{\times}$ be the group of units of $\mathcal{O}_{\mathfrak{P}}$. For any $n \geq 1$, there
is a subgroup of $U$ defined by
\begin{align*}
U^{(n)}:= 1 + \mathfrak{P}^n \mathcal{O}_{\mathfrak{P}},
\end{align*}
called the \textit{$n$-th higher unit group}. The higher unit groups form a decreasing filtration
\begin{align*}
U \supseteq U^{(1)} \supseteq U^{(2)} \supseteq \cdots \supseteq U^{(n)} \supseteq \cdots .
\end{align*}

For elements $\alpha, \beta \in K^{\times}$, we define the multiplicative congruence by
\begin{align*}
\alpha \overset{\times}{\equiv} \beta \pmod{\mathfrak{P}^n} &\iff \alpha \in \beta (1 + \mathfrak{P}^n \mathcal{O}_{\mathfrak{P}}).
\end{align*}
Thus we see that equivalently
\begin{align*}
\alpha \overset{\times}{\equiv} \beta \pmod{\mathfrak{P}^n} \iff \frac{\alpha}{\beta} \in U^{(n)} \iff v_{\mathfrak{P}}\left( \frac{\alpha}{\beta} - 1 \right) \geq n.
\end{align*}

Let $\mathfrak{m}_0$ be an integral ideal of $K$ and $\mathfrak{m}_{\infty}$ be the formal product
of all the real infinite primes corresponding to the embeddings in $\op{Hom}(K, \R)$. Define the
modulus $\mathfrak{m}:= \mathfrak{m}_0 \mathfrak{m}_{\infty}$. Then we extend the multiplicative congruence by setting
\begin{align*}
\alpha \overset{\times}{\equiv} \beta \pmod{\mathfrak{m}} \iff
\begin{cases}
\alpha \overset{\times}{\equiv} \beta \pmod{\mathfrak{P}^{v_{\mathfrak{P}}(\mathfrak{m}_0) }}  & \textrm{for all $\mathfrak{P} | \mathfrak{m}_0$, and} \\
\dfrac{\sigma(\alpha)}{\sigma(\beta)} >0 &  \textrm{for all $\sigma \in \op{Hom}(K, \R)$}.
\end{cases}
\end{align*}
The multiplicative congruence is indeed multiplicative, i.e., if
\begin{align*}
\alpha_1 \overset{\times}{\equiv} \beta_1 \pmod{\mathfrak{m}} \quad \text{and} \quad \alpha_2 \overset{\times}{\equiv} \beta_2 \pmod{\mathfrak{m}},
\end{align*}
then
\begin{align*}
\alpha_1 \alpha_2 \overset{\times}{\equiv} \beta_1 \beta_2 \pmod{\mathfrak{m}}.
\end{align*}

Let $\mathcal{I}_{K}(\mathfrak{m}_0)$ be the group of all fractional ideals of $K$ that are relatively prime to $\mathfrak{m}_0$. Let
\begin{align*}
K_{\mathfrak{m}, 1}:= \{ x \in K^{\times} \suchthat
\text{$x\mathcal{O}_K$ is relatively prime to $\mathfrak{m}_0$ and } x \overset{\times}{\equiv} 1 \pmod{\mathfrak{m}} \}
\end{align*}
be the ray modulo $\mathfrak{m}$ and $\mathcal{P}_{K}(\mathfrak{m})$ be the subgroup of
$\mathcal{I}_{K}(\mathfrak{m}_0)$ of principal fractional ideals $x\mathcal{O}_K$
generated by elements $x \in K_{\mathfrak{m}, 1}$. Then the \textit{ray class group} of $K$ modulo $\mathfrak{m}$ is the quotient group
\begin{align*}
\mathcal{R}_{K}(\mathfrak{m}):= \mathcal{I}_{K}(\mathfrak{m}_0) / \mathcal{P}_{K}(\mathfrak{m}).
\end{align*}
A coset in the ray class group is called a \textit{ray class} modulo $\mathfrak{m}$.

\subsection{Constructing CM extensions with prescribed ramification}


In the following proposition we explicitly construct infinite families of CM extensions with ``arbitrary'' prescribed 
ramification. This is a variation on \cite[Lemma 1.5]{Shi67}, adapted to the particular
setting we will consider.

\begin{proposition}\label{InfiniteCMFields}
Let $F$ be a totally real number field. Let $p \in \Z$ be a prime number and $m \geq 1$ be a positive integer. Let
$\mathfrak{p}$ be a prime ideal of $F$ lying above $p$. Let $\mathcal{R}$ be a finite set
of prime ideals of $F$ not dividing $pm$. Let $\mathcal{U}_1$ and $\mathcal{U}_2$ be finite sets of prime ideals of $F$ not
dividing $2pm$ such that $\mathcal{R}, \mathcal{U}_1$ and $\mathcal{U}_2$ are pairwise disjoint.
Then there is a set $\mathcal{S}_{\mathcal{R}, \frak{p}}$ of prime ideals of $F$ which is disjoint from
$\mathcal{R} \cup \mathcal{U}_1 \cup \mathcal{U}_2 \cup \{\frak{p}\}$ such that
the following statements are true.
\begin{itemize}
\item[(i)] $\mathcal{S}_{\mathcal{R}, \frak{p}}$ has positive natural density.
\item[(ii)] Each prime ideal $\frak{q} \in \mathcal{S}_{\mathcal{R}, \frak{p}}$ is relatively prime to $pm$.
\item[(iii)] For each prime ideal $\frak{q} \in \mathcal{S}_{\mathcal{R}, \frak{p}}$, there is an element
$\Delta_{\frak{q}} \in \mathcal{O}_F$ with prime factorization
\begin{align*}
\Delta_{\frak{q}}\mathcal{O}_F=\frak{p} \frak{q}\prod_{\frak{r} \in \mathcal{R}}\frak{r}.
\end{align*}
\item[(iv)] The field $E_{\frak{q}}:=F(\sqrt{\Delta_{\frak{q}}})$ is a CM extension of $F$
which is ramified only at the prime ideals of $F$ dividing $\Delta_{\frak{q}}$. Moreover,
each prime ideal in $\mathcal{U}_1$ splits in $E_{\frak{q}}$ and each
prime ideal in $\mathcal{U}_2$ is inert in $E_{\frak{q}}$.
\end{itemize}
\end{proposition}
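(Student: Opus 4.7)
The plan is to combine the Chebotarev density theorem applied to a carefully chosen ray class group of $F$ with weak approximation. The strategy is to translate every requirement on $\Delta_{\mathfrak{q}}$ --- sign at infinity, unramifiedness at primes above $2$ outside $\{\mathfrak{p}\}\cup\mathcal{R}$, prescribed splitting/inertia in $\mathcal{U}_1\cup\mathcal{U}_2$, and prescribed ideal factorization --- into the single condition that $[\mathfrak{q}]$ lies in a prescribed class of a ray class group.

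I would begin by setting up the modulus. Let $S_2$ denote the set of prime ideals of $F$ lying above $2$ but not in $\mathcal{R}\cup\{\mathfrak{p}\}$; these are the primes at which $E_{\mathfrak{q}}/F$ must be unramified even though they do not divide $\Delta_{\mathfrak{q}}$. For each $\mathfrak{P}\in S_2$, pick an exponent $N_{\mathfrak{P}}$ with $U^{(N_{\mathfrak{P}})}\subset(F_{\mathfrak{P}}^\times)^2$ (e.g.\ $N_{\mathfrak{P}}=2e(\mathfrak{P}/2)+1$), so that lying in a fixed coset of $1$ modulo $\mathfrak{P}^{N_{\mathfrak{P}}}$ determines the class in $F_{\mathfrak{P}}^\times/(F_{\mathfrak{P}}^\times)^2$. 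Define
\[
\mathfrak{m}_0 := \prod_{\mathfrak{P}\in S_2}\mathfrak{P}^{N_{\mathfrak{P}}}\cdot\prod_{\mathfrak{P}\in\mathcal{U}_1\cup\mathcal{U}_2}\mathfrak{P},\qquad \mathfrak{m}:=\mathfrak{m}_0\mathfrak{m}_\infty,
\]
where $\mathfrak{m}_\infty$ is the product of the real places of $F$. By construction $\mathfrak{m}_0$ is coprime to $\mathfrak{a}:=\mathfrak{p}\prod_{\mathfrak{r}\in\mathcal{R}}\mathfrak{r}$.

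Next, I would invoke weak approximation to produce an auxiliary $\Delta_0\in F^\times$ that is a unit at every prime dividing $\mathfrak{m}_0$ and satisfies: $\Delta_0$ is totally negative; the class of $\Delta_0$ in $F_{\mathfrak{P}}^\times/(F_{\mathfrak{P}}^\times)^2$ for each $\mathfrak{P}\in S_2$ is either trivial or that of the unique unramified quadratic extension of $F_{\mathfrak{P}}$; $\Delta_0$ is a square modulo each $\mathfrak{P}\in\mathcal{U}_1$; and a non-square modulo each $\mathfrak{P}\in\mathcal{U}_2$. Each of these is an open local condition at one of finitely many places of $F$, so a common $\Delta_0$ exists. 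Set $c:=[\Delta_0\mathcal{O}_F]\cdot[\mathfrak{a}]^{-1}\in\mathcal{R}_F(\mathfrak{m})$. By the Chebotarev density theorem applied to the ray class field of $F$ of conductor dividing $\mathfrak{m}$, the set of prime ideals $\mathfrak{q}$ of $F$ with $[\mathfrak{q}]=c$ in $\mathcal{R}_F(\mathfrak{m})$ has natural density $1/|\mathcal{R}_F(\mathfrak{m})|>0$. Removing from this set the finitely many prime ideals above $pm$ together with those in $\mathcal{R}\cup\mathcal{U}_1\cup\mathcal{U}_2\cup\{\mathfrak{p}\}$ yields $\mathcal{S}_{\mathcal{R},\mathfrak{p}}$, establishing (i) and (ii).

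For each such $\mathfrak{q}$, the identity $[\mathfrak{a}\mathfrak{q}]=[\Delta_0\mathcal{O}_F]$ in $\mathcal{R}_F(\mathfrak{m})$ yields $\mathfrak{a}\mathfrak{q}=\Delta_{\mathfrak{q}}\mathcal{O}_F$ for some $\Delta_{\mathfrak{q}}\in\mathcal{O}_F$ with $\Delta_{\mathfrak{q}}\overset{\times}{\equiv}\Delta_0\pmod{\mathfrak{m}}$, giving (iii). Because the local conditions transfer from $\Delta_0$ to $\Delta_{\mathfrak{q}}$, the element $\Delta_{\mathfrak{q}}$ is totally negative (so $E_{\mathfrak{q}}=F(\sqrt{\Delta_{\mathfrak{q}}})$ is CM over the totally real $F$), forces every $\mathfrak{P}\in S_2$ to be unramified in $E_{\mathfrak{q}}/F$ (so the only ramified primes are those dividing $\Delta_{\mathfrak{q}}$), and realizes the prescribed splitting in $\mathcal{U}_1$ and inertia in $\mathcal{U}_2$, proving (iv). The only nontrivial input is the choice of exponent $N_{\mathfrak{P}}$ at primes above $2$: one must verify that $U^{(N_{\mathfrak{P}})}\subset(F_{\mathfrak{P}}^\times)^2$ so that a congruence modulo $\mathfrak{P}^{N_{\mathfrak{P}}}$ truly determines the relevant class in $F_{\mathfrak{P}}^\times/(F_{\mathfrak{P}}^\times)^2$. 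This is a standard computation in local class field theory over $\Q_2$ and is the only place the argument is more than formal.
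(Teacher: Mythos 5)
Your proposal is correct and follows essentially the same route as the paper: both translate the sign, ramification, and splitting constraints on $\Delta_{\mathfrak{q}}$ into membership in a single ray class for a suitable modulus supported on the primes above $2$ (with exponent large enough that the congruence determines the local square class) and on the primes in $\mathcal{U}_1 \cup \mathcal{U}_2$, produce an auxiliary totally negative element with the right local square classes by weak approximation, and apply the density theorem for ray classes. The paper packages this slightly differently (uniform exponent $e$ at all controlled primes, explicit unramified-generator $\alpha_{\mathfrak{P}}$ from its Lemma \ref{QUE}, and inclusion of the primes dividing $pm$ in the modulus so that coprimality to $pm$ is automatic rather than enforced by discarding finitely many primes), but these are cosmetic variations on the identical argument.
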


\begin{remark} Note that if $\mathfrak{q}_1, \mathfrak{q}_2 \in \mathcal{S}_{\mathcal{R}, \mathfrak{p}}$ with $\mathfrak{q}_1 \neq \mathfrak{q}_2$, then the
associated CM extensions $E_{\frak{q}_1}/F$ and $E_{\mathfrak{q}_2}/F$ are
distinct since they are ramified only at the primes in the sets $\mathcal{R} \cup \{\mathfrak{p}, \mathfrak{q}_1\}$ and
$\mathcal{R} \cup \{\mathfrak{p}, \mathfrak{q}_2\}$, respectively.
\end{remark}

In order to prove Proposition \ref{InfiniteCMFields} we will need the following two lemmas.

\begin{lemma}\label{square}
Let $\mathcal{S}$ be a set of prime ideals of $F$ and suppose that $e \in \Z$ satisfies
\begin{align*}
e \geq  2 \op{max}\{ v_{\mathfrak{P}}(2) \suchthat \mathfrak{P} \in \mathcal{S}\} + 1.
\end{align*}
Then for any prime ideal $\mathfrak{P} \in \mathcal{S}$, if $\alpha \in F^{\times}$
and $\alpha \overset{\times}{\equiv} 1 \pmod{\mathfrak{P}^e}$ then $F_{\mathfrak{P}}(\sqrt{\alpha}) = F_{\mathfrak{P}}$.
\end{lemma}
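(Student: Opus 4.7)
The plan is to prove this by applying the strong form of Hensel's lemma to the polynomial $f(X) = X^2 - \alpha$ in $F_{\mathfrak{P}}[X]$, using $x_0 = 1$ as an approximate root.

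First, I would recall the precise form of Hensel's lemma I intend to use: if $f \in \mathcal{O}_{\mathfrak{P}}[X]$ and there exists $x_0 \in \mathcal{O}_{\mathfrak{P}}$ with
\begin{align*}
v_{\mathfrak{P}}(f(x_0)) > 2 v_{\mathfrak{P}}(f'(x_0)),
\end{align*}
then $f$ has a root in $\mathcal{O}_{\mathfrak{P}}$. Note that since $\alpha \overset{\times}{\equiv} 1 \pmod{\mathfrak{P}^e}$, in particular $v_{\mathfrak{P}}(\alpha)=0$, so $\alpha \in \mathcal{O}_{\mathfrak{P}}^{\times}$ and $f(X) = X^2 - \alpha \in \mathcal{O}_{\mathfrak{P}}[X]$ makes sense.

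Next, I would check the Hensel hypothesis with $x_0 = 1$. Since $f(1) = 1 - \alpha$ and $f'(1) = 2$, I need $v_{\mathfrak{P}}(1-\alpha) > 2 v_{\mathfrak{P}}(2)$. The multiplicative congruence $\alpha \overset{\times}{\equiv} 1 \pmod{\mathfrak{P}^e}$ unpacks to $\alpha/1 \in U^{(e)}$, i.e., $v_{\mathfrak{P}}(\alpha - 1) \geq e$. By the hypothesis on $e$,
\begin{align*}
v_{\mathfrak{P}}(1-\alpha) \;\geq\; e \;\geq\; 2 v_{\mathfrak{P}}(2) + 1 \;>\; 2 v_{\mathfrak{P}}(2) \;=\; 2 v_{\mathfrak{P}}(f'(1)),
\end{align*}
so Hensel's lemma applies and yields some $\beta \in \mathcal{O}_{\mathfrak{P}}$ with $\beta^2 = \alpha$.

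Finally, I would conclude that $\sqrt{\alpha} \in F_{\mathfrak{P}}$, and hence $F_{\mathfrak{P}}(\sqrt{\alpha}) = F_{\mathfrak{P}}$. There is no real obstacle here; the only subtlety is making sure the ``$+1$'' in the lower bound on $e$ is correctly used to pass from ``$\geq 2v_{\mathfrak{P}}(2)$'' (which would not suffice for Hensel) to the strict inequality required. This is exactly why the hypothesis is formulated as $e \geq 2 v_{\mathfrak{P}}(2) + 1$ rather than $e \geq 2 v_{\mathfrak{P}}(2)$, accommodating the residue characteristic $2$ case where $v_{\mathfrak{P}}(2) > 0$.
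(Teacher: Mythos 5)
Your proof is correct and, as it happens, a bit more direct than the one in the paper, though the underlying mechanism is the same. The paper does not invoke Hensel's lemma by name; instead it cites a structural result (Weiss, \emph{Algebraic number theory}, Proposition 3-1-6) which says that for $i \geq v_{\mathfrak{P}}(m) + 1$ the $m$-th power map $\phi_m\colon U^{(i)} \to U^{(i + v_{\mathfrak{P}}(m))}$ on higher unit groups is an isomorphism. Taking $m=2$, the surjectivity of $\phi_2$ means every element of $U^{(i + v_{\mathfrak{P}}(2))}$ is a square; the paper then sets $i = \max\{v_{\mathfrak{P}}(2)\} + 1$, observes that $e \geq i + v_{\mathfrak{P}}(2)$ forces $U^{(e)} \subseteq U^{(i + v_{\mathfrak{P}}(2))}$ via the decreasing filtration, and concludes that $\alpha \in U^{(e)}$ is a square. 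Your argument reaches the same conclusion by applying the strong form of Hensel's lemma directly to $X^2 - \alpha$ with approximate root $x_0 = 1$, checking $v_{\mathfrak{P}}(1-\alpha) \geq e > 2v_{\mathfrak{P}}(2) = 2v_{\mathfrak{P}}(f'(1))$. The two routes are of course two faces of the same coin — the Weiss isomorphism is itself proved by a Hensel-type iteration — but your version is more self-contained and makes the role of the ``$+1$'' slack in the hypothesis transparent, while the paper's version packages the analysis into a quoted structural statement about the filtration of $\mathcal{O}_{\mathfrak{P}}^{\times}$. Both are fine; you might simply note which exact formulation of Hensel's lemma you are citing, since some standard references only state the weak version requiring $f'(x_0)$ to be a unit.
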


\begin{proof}
Let $\mathfrak{P} \in \mathcal{S}$. Observe that $F_{\mathfrak{P}}(\sqrt{\alpha}) = F_{\mathfrak{P}}$
if and only if $\alpha$ is a perfect square in $F_{\mathfrak{P}}$.
Let $\mathcal{O}_{\mathfrak{P}}$
be the ring of integers of $F_{\mathfrak{P}}$ and $U^{(n)}:= 1 + \mathfrak{P}^{n} \mathcal{O}_{\mathfrak{P}}$ be the
$n$-th higher unit group. Let $v_{\mathfrak{P}}: F \longrightarrow \Z\cup \{ \infty \}$ be the discrete valuation given by
$v_{\mathfrak{P}}(x):= \op{ord}_{\mathfrak{P}}(x)$.
By \cite[Proposition 3-1-6, p. 79]{Wei98},
if $m, i \in \Z$ are integers with $m \geq 1$ and $i \geq v_{\mathfrak{P}}(m) + 1$, then the map
$\phi_{m}: U^{(i)} \longrightarrow U^{(i + v_{\mathfrak{P}}(m))}$ given by $\phi_{m}(x) := x^m$ is
an isomorphism. In particular, when $m = 2$ the surjectivity of the map
$\phi_2$ implies that every element of $U^{(i + v_{\mathfrak{P}}(2) )}$ is a perfect square.

Now, let $i:=\op{max}\{ v_{\mathfrak{P}}(2) \suchthat \mathfrak{P} \in \mathcal{S}\} + 1$. Then because
$i \geq v_{\mathfrak{P}}(2) + 1$, every element of $U^{(i + v_{\mathfrak{P}}(2) )}$ is a perfect square.
On the other hand, if $e \in \Z$ satisfies
\begin{align*}
e \geq  2 \op{max}\{ v_{\mathfrak{P}}(2) \suchthat \mathfrak{P} \in \mathcal{S}\} + 1,
\end{align*}
then $e \geq i + v_{\frak{P}}(2)$. Since the higher unit groups form a decreasing filtration, it follows that
\begin{align*}
U^{(e)} \subseteq  U^{(i + v_{\mathfrak{P}}(2) )}.
\end{align*}
In particular, every element of $U^{(e)}$ is a perfect square.
Finally, since $\alpha \overset{\times}{\equiv} 1 \pmod{\mathfrak{P}^e}$ implies that $\alpha \in U^{(e)}$, the
proof is complete.
\end{proof}

\begin{lemma}\label{QUE}
For each prime ideal $\mathfrak{P}$ of $F$, there exists an element $\alpha_{\mathfrak{P}} \in \mathcal{O}_F$ such that
$F_{\mathfrak{P}}(\sqrt{\alpha_{\mathfrak{P}}})$ is an unramified quadratic extension of $F_{\mathfrak{P}}$.
\end{lemma}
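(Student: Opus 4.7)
The plan is to first build the desired unramified quadratic extension inside $F_{\mathfrak{P}}$ in the form $F_{\mathfrak{P}}(\sqrt{\beta})$ with $\beta$ a unit, and then to replace $\beta$ by a globally integral element using $\mathfrak{P}$-adic density together with Lemma \ref{square}.

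\textbf{Step 1 (existence of a unit square-root generator).} The completion $F_{\mathfrak{P}}$ admits a unique unramified extension $L/F_{\mathfrak{P}}$ of degree $2$, obtained by adjoining a Hensel lift of a generator of the unique quadratic extension of the residue field $\mathcal{O}_F/\mathfrak{P}$. Since $\op{char}(F_{\mathfrak{P}}) = 0$, every separable quadratic extension is Kummer: completing the square on any minimal polynomial of degree $2$ exhibits $L = F_{\mathfrak{P}}(\sqrt{\beta})$ for some $\beta \in F_{\mathfrak{P}}^{\times}$. Multiplying $\beta$ by the square of a suitable power of a uniformizer does not change the extension, so I may assume $v_{\mathfrak{P}}(\beta) \in \{0, 1\}$. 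Because $L/F_{\mathfrak{P}}$ is unramified, the valuation of any generator of $L^{\times}/(F_{\mathfrak{P}}^{\times})^2$ must be even, forcing $v_{\mathfrak{P}}(\beta) = 0$; thus $\beta \in \mathcal{O}_{\mathfrak{P}}^{\times}$.

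\textbf{Step 2 (global approximation).} Fix an integer $e \geq 2v_{\mathfrak{P}}(2) + 1$. The ring $\mathcal{O}_F$ is dense in $\mathcal{O}_{\mathfrak{P}}$ in the $\mathfrak{P}$-adic topology, so there exists $\alpha_{\mathfrak{P}} \in \mathcal{O}_F$ with
\begin{align*}
v_{\mathfrak{P}}(\alpha_{\mathfrak{P}} - \beta) \geq e.
\end{align*}
Since $\beta \in \mathcal{O}_{\mathfrak{P}}^{\times}$ and $e \geq 1$, this forces $\alpha_{\mathfrak{P}}$ to be a unit at $\mathfrak{P}$, and
\begin{align*}
v_{\mathfrak{P}}\!\left(\frac{\alpha_{\mathfrak{P}}}{\beta} - 1\right) = v_{\mathfrak{P}}(\alpha_{\mathfrak{P}} - \beta) - v_{\mathfrak{P}}(\beta) \geq e,
\end{align*}
i.e.\ $\alpha_{\mathfrak{P}}/\beta \overset{\times}{\equiv} 1 \pmod{\mathfrak{P}^{e}}$.

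\textbf{Step 3 (finishing via Lemma \ref{square}).} Applying Lemma \ref{square} to the singleton $\mathcal{S} = \{\mathfrak{P}\}$ and the element $\alpha_{\mathfrak{P}}/\beta$ gives $F_{\mathfrak{P}}(\sqrt{\alpha_{\mathfrak{P}}/\beta}) = F_{\mathfrak{P}}$, so $\alpha_{\mathfrak{P}}/\beta$ is a square in $F_{\mathfrak{P}}^{\times}$. Consequently
\begin{align*}
F_{\mathfrak{P}}\!\left(\sqrt{\alpha_{\mathfrak{P}}}\right) \;=\; F_{\mathfrak{P}}\!\left(\sqrt{\beta}\right) \;=\; L,
\end{align*}
which is the unramified quadratic extension of $F_{\mathfrak{P}}$, as required.

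The only non-routine point is Step 1, where one must know that even when $\mathfrak{P}$ lies over $2$ the unramified quadratic extension is Kummer and can be presented as $F_{\mathfrak{P}}(\sqrt{\beta})$ with $\beta$ a unit; this is handled cleanly by completing the square in characteristic zero and adjusting by a square of a uniformizer. After that, density of $\mathcal{O}_F$ in $\mathcal{O}_{\mathfrak{P}}$ and the already-proved Lemma \ref{square} do all the work.
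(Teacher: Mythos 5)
Your argument is correct, but it follows a genuinely different route than the paper's. The paper lifts the coefficients of an irreducible monic quadratic $\widehat{f}(x)=x^2+\widehat{a_1}x+\widehat{a_0}$ over the residue field $\mathcal{O}_{\mathfrak{P}}/\mathfrak{P}\mathcal{O}_{\mathfrak{P}}$ to $a_0,a_1\in\mathcal{O}_F$ (using surjectivity of $\mathcal{O}_F\to\mathcal{O}_{\mathfrak{P}}/\mathfrak{P}\mathcal{O}_{\mathfrak{P}}$), obtaining an irreducible $f(x)=x^2+a_1x+a_0\in\mathcal{O}_F[x]\subset F_{\mathfrak{P}}[x]$, and then completes the square to read off $\alpha_{\mathfrak{P}}:=a_1^2-4a_0\in\mathcal{O}_F$ directly; this handles the $2$-adic subtlety in one stroke and never leaves $\mathcal{O}_F$. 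You instead first invoke local Kummer theory to get a unit $\beta\in\mathcal{O}_{\mathfrak{P}}^{\times}$ with $L=F_{\mathfrak{P}}(\sqrt{\beta})$, then use density of $\mathcal{O}_F$ in $\mathcal{O}_{\mathfrak{P}}$ to approximate, and finally call on Lemma \ref{square} to conclude $\alpha_{\mathfrak{P}}/\beta$ is a square. Both are correct; your route is a bit longer and outsources the $2$-adic difficulty to Lemma \ref{square} rather than confronting it with the quadratic formula, but it has the merit of making the logical dependence on Lemma \ref{square} transparent. One small wrinkle worth noting: you apply Lemma \ref{square} to $\alpha_{\mathfrak{P}}/\beta\in F_{\mathfrak{P}}^{\times}$, whereas the lemma as stated requires $\alpha\in F^{\times}$. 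This is harmless, because the proof of Lemma \ref{square} is purely local (it only uses $\alpha\in U^{(e)}\subset\mathcal{O}_{\mathfrak{P}}^{\times}$), but if you want to quote the lemma as written you should either remark that its proof applies verbatim to elements of $F_{\mathfrak{P}}^{\times}$, or rephrase so the argument only ever invokes the lemma for global elements.
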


\begin{proof} Up to isomorphism, there is a unique unramified quadratic extension of
$F_{\mathfrak{P}}$, and moreover, it can be obtained by adjoining to $F_{\mathfrak{P}}$ a lifting of a primitive element for the
unique quadratic extension of the finite field
\begin{align*}
\mathcal{O}_{\mathfrak{P}}/\mathfrak{P}\mathcal{O}_{\mathfrak{P}}
\end{align*}
(see e.g. \cite[Theorem 1.2.2, p. 14]{Chi09} or \cite[Proposition 6.54]{KKS11}). Thus, let
\begin{align*}
\widehat{f}(x):= x^2 + \widehat{a_1} x + \widehat{a_0} \in \mathcal{O}_{\mathfrak{P}}/\mathfrak{P}\mathcal{O}_{\mathfrak{P}}[x]
\end{align*}
be an irreducible quadratic polynomial. It is known that the homomorphism
\begin{align*}
\phi: \mathcal{O}_F &\longrightarrow \mathcal{O}_{\mathfrak{P}}/\mathfrak{P}\mathcal{O}_{\mathfrak{P}}\\
\alpha & \longmapsto \alpha + \mathfrak{P}\mathcal{O}_{\mathfrak{P}}
\end{align*}
has kernel $\mathfrak{P}$ and is surjective (see e.g. \cite[Propositions II.4.3 and II.2.4]{Neu99} or \cite[Theorem 11(c)]{FT93}).
Thus every coset of $\mathcal{O}_{\mathfrak{P}}/\mathfrak{P}\mathcal{O}_{\mathfrak{P}}$ has a representative in $\mathcal{O}_F$. Let
$a_0, a_1 \in \mathcal{O}_F$ be such that $\widehat{a_0} = a_0 + \mathfrak{P}\mathcal{O}_{\mathfrak{P}}$ and
$\widehat{a_1} = a_1 + \mathfrak{P}\mathcal{O}_{\mathfrak{P}}$. Then define the polynomial
\begin{align*}
f(x):= x^2 + a_1x + a_0 \in \mathcal{O}_F[x] \subset F_{\mathfrak{P}}[x].
\end{align*}
It follows that $f(x)$ is irreducible in $F_{\mathfrak{P}}[x]$, and moreover by the quadratic formula its roots have the form
\begin{align*}
\frac{-a_1 \pm \sqrt{a_1^2 - 4a_0}}{2}.
\end{align*}
Hence by taking $\alpha_{\mathfrak{P}}:= a_1^2 - 4 a_0 \in \mathcal{O}_F$, we see that
$F_{\mathfrak{P}}(\sqrt{\alpha_{\mathfrak{P}}})$ is an unramified quadratic extension of $F_{\mathfrak{P}}$.
\end{proof}

\textbf{Proof of Proposition \ref{InfiniteCMFields}}. Define the following disjoint sets of prime ideals of $F$.
\begin{align*}
\mathcal{T}_1 &:=\left(\mathcal{U}_1 \cup
\{ \mathfrak{P} \subset \mathcal{O}_F \suchthat \text{$\mathfrak{P}$ divides $pm$}  \}\right) \smallsetminus \{ \mathfrak{p} \},\\
\mathcal{T}_2 &:=\left( \mathcal{U}_2 \cup \{ \mathfrak{P} \subset \mathcal{O}_F \suchthat \text{$\mathfrak{P}$ divides $2$}  \}\right)
\smallsetminus (\mathcal{T}_1 \cup \mathcal{R} \cup \{\mathfrak{p}\}).
\end{align*}
Now, fix an integer $e \in \Z$ satisfying
\begin{align*}
e \geq  2 \op{max}\{ v_{\mathfrak{P}}(2) \suchthat \mathfrak{P} \in \mathcal{T}_1 \cup \mathcal{T}_2\} + 1.
\end{align*}
Then by Lemma \ref{square}, for any prime ideal $\mathfrak{P} \in \mathcal{T}_1 \cup \mathcal{T}_2$, if $\alpha \in F^{\times}$
and $\alpha \overset{\times}{\equiv} 1 \pmod{\mathfrak{P}^e}$ then $F_{\mathfrak{P}}(\sqrt{\alpha}) = F_{\mathfrak{P}}$.
Also, as in Lemma \ref{QUE}, for each prime ideal $\mathfrak{P} \in \mathcal{T}_2$, let $\alpha_{\mathfrak{P}} \in \mathcal{O}_{F}$
be such that $F_{\mathfrak{P}}(\sqrt{\alpha_{\mathfrak{P}}})$ is an unramified quadratic extension of $F_{\mathfrak{P}}$.

Let $\frak{m}_{\infty}$ be the formal product of all the real infinite primes corresponding to the embeddings in $\op{Hom}(F, \R)$.
By an application of the Approximation Theorem (see e.g. \cite[pp. 137-139]{Jan96}),
there exists an element $a \in F^{\times}$ satisfying the following congruences.
\begin{itemize}
\item[(1)] \textrm{$a \overset{\times}{\equiv} -1 \pmod{\mathfrak{m}_{\infty}}$}.
\item[(2)] \textrm{$a \overset{\times}{\equiv} 1 \pmod{\mathfrak{P}^e}$ for every $\mathfrak{P} \in \mathcal{T}_1$}.
\item[(3)] \textrm{$a \overset{\times}{\equiv} \alpha_{\mathfrak{P}} \pmod{\mathfrak{P}^e}$ for every $\mathfrak{P} \in \mathcal{T}_2$}.
\end{itemize}

Define the integral ideal
\begin{align*}
\mathfrak{m}_0:= \prod_{\mathfrak{P} \in \mathcal{T}_1 \cup \mathcal{T}_2} \mathfrak{P}^{e}
\end{align*}
and the modulus $\frak{m}:=\mathfrak{m}_0 \mathfrak{m}_{\infty}$. Let $\mathcal{R}_F(\mathfrak{m})$ be the ray class group modulo
$\mathfrak{m}$. Observe that the fractional ideal
\begin{align}\label{nid}
\mathfrak{n} := a \mathfrak{p}^{-1} \prod \limits_{\mathfrak{r} \in \mathcal{R}} \mathfrak{r}^{-1}
\end{align}
is relatively prime to $\frak{m}_0$. Then we can define the set of prime ideals
\begin{align*}
\mathcal{S}(\mathfrak{n}):=\{\frak{q} \subset \mathcal{O}_F \suchthat \textrm{$\frak{q}$ is a prime ideal and
$[\frak{q}]=\left[\mathfrak{n} \right]$ in $\mathcal{R}_{F}(\mathfrak{m})$}\}.
\end{align*}
Also, define the set of prime ideals
\begin{align*}
\mathcal{S}_{\mathcal{R}, \frak{p}}:=\mathcal{S}(\mathfrak{n}) \smallsetminus (\mathcal{T}_1 \cup \mathcal{T}_2 \cup \mathcal{R} \cup \{\frak{p}\}).
\end{align*}

To prove Proposition \ref{InfiniteCMFields} (i), it is known that the set
$\mathcal{S}(\mathfrak{n})$ has natural density
\begin{align*}
d(\mathcal{S}(\mathfrak{n})):=\lim_{X \rightarrow \infty}\frac{\# \{ \frak{q} \in \mathcal{S}(\mathfrak{n}) \suchthat N_{F/\Q}(\frak{q}) \leq X \}}
{\# \{\frak{q} \subset \mathcal{O}_F \suchthat \textrm{$\frak{q}$ is a prime ideal with $N_{F/\Q}(\frak{q}) \leq X$}\}} =
\frac{1}{\# \mathcal{R}_{F}(\mathfrak{m})}.
\end{align*}
Since the set $\mathcal{T}_1 \cup \mathcal{T}_2 \cup \mathcal{R} \cup \{\frak{p}\}$ is finite, we also have
\begin{align*}
d(\mathcal{S}_{\mathcal{R}, \frak{p}})=\frac{1}{\# \mathcal{R}_{F}(\mathfrak{m})}.
\end{align*}

To prove Proposition \ref{InfiniteCMFields} (ii), note that
if $\frak{q} \in \mathcal{S}_{\mathcal{R}, \frak{p}}$ then $\frak{q} \not \in \mathcal{T}_1 \cup \{\frak{p}\}$, hence
$\frak{q}$ is relatively prime to $pm$.

To prove Proposition \ref{InfiniteCMFields} (iii),
let $\frak{q} \in \mathcal{S}_{\mathcal{R}, \frak{p}}$. Since $[\frak{q}]=[\frak{n}]$ in $\mathcal{R}_{F}(\mathfrak{m})$,
there exists an element $b_{\frak{q}} \in F^{\times}$ such that
\begin{itemize}
\item[(4)] \textrm{$b_{\frak{q}} \overset{\times}{\equiv} 1 \pmod{\mathfrak{m}}$ and $\mathfrak{q} = b_{\frak{q}}\frak{n}$}.
\end{itemize}
By (\ref{nid}) and (4) we have
\begin{align*}
\frak{q}=ab_{\frak{q}}\frak{p}^{-1}\prod_{\frak{r} \in \mathcal{R}}\frak{r}^{-1}.
\end{align*}
Define $\Delta_{\frak{q}}:=ab_{\frak{q}}$. Then
\begin{align}\label{deltafactor}
\Delta_{\frak{q}}\mathcal{O}_F=ab_{\frak{q}}\mathcal{O}_F=\frak{p}\frak{q}\prod_{\frak{r} \in \mathcal{R}}\frak{r}.
\end{align}
Note that this also proves that $\Delta_{\frak{q}} \in \mathcal{O}_F$.

Finally, define the field $E_{\frak{q}}:=F(\sqrt{\Delta_{\frak{q}}})$. Then Proposition \ref{InfiniteCMFields} (iv)
is a consequence of the following lemma.

\begin{lemma}\label{ablemma}
Let $a \in F^{\times}$ be an element satisfying $(1)-(3)$ and $b_{\frak{q}} \in F^{\times}$ be an element satisfying $(4)$. Let
$\Delta_{\frak{q}}:=ab_{\frak{q}} \in \mathcal{O}_F$.
Then the field $E_{\frak{q}}:=F(\sqrt{\Delta_{\frak{q}}})$
is a CM extension of $F$ which satisfies the following properties.
\begin{itemize}
\item[(i)] $E_{\frak{q}}$ is ramified only at the prime ideals of $F$ dividing $\Delta_{\frak{q}}$.
\item[(ii)]  Each prime ideal in $\mathcal{U}_1$ splits in $E_{\frak{q}}$ and each
prime ideal in $\mathcal{U}_2$ is inert in $E_{\frak{q}}$.
\end{itemize}
\end{lemma}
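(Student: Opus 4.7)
The plan is to prove Lemma \ref{ablemma} by a case analysis on prime ideals of $F$, using Lemma \ref{square} together with conditions (1)--(4) on $a$ and $b_{\frak{q}}$ to control the local behavior of $\Delta_{\frak{q}} = ab_{\frak{q}}$ at each prime. Everything ultimately comes down to the standard fact that for a separable quadratic extension $F_{\mathfrak{P}}(\sqrt{\delta})/F_{\mathfrak{P}}$, the extension is split/unramified/ramified according to whether $\delta$ is a square, an unramified non-square unit, or otherwise.

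First I would verify that $E_{\frak{q}}$ is indeed a CM extension of $F$. Condition (1) says $a$ is totally negative, and condition (4) (which includes the infinite part of $\mathfrak{m}$) says $b_{\frak{q}}$ is totally positive; hence $\Delta_{\frak{q}}$ is totally negative, and $E_{\frak{q}} = F(\sqrt{\Delta_{\frak{q}}})$ is a totally imaginary quadratic extension of the totally real field $F$, i.e., a CM field.

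Next, for part (i) and (ii), I would split the prime ideals $\mathfrak{P} \subset \mathcal{O}_F$ into four classes. (a) For $\mathfrak{P} \in \mathcal{T}_1$ (which contains $\mathcal{U}_1$): conditions (2) and (4) give $\Delta_{\frak{q}} \overset{\times}{\equiv} 1 \pmod{\mathfrak{P}^e}$, so by Lemma \ref{square} $\Delta_{\frak{q}}$ is a square in $F_{\mathfrak{P}}$, whence $\mathfrak{P}$ splits completely in $E_{\frak{q}}$. In particular every prime of $\mathcal{U}_1$ splits, and all primes in $\mathcal{T}_1$ are unramified. (b) For $\mathfrak{P} \in \mathcal{T}_2$ (which contains $\mathcal{U}_2$ and all primes above $2$ that are not already accounted for): conditions (3) and (4) give $\Delta_{\frak{q}}/\alpha_{\mathfrak{P}} \overset{\times}{\equiv} 1 \pmod{\mathfrak{P}^e}$, so Lemma \ref{square} yields $F_{\mathfrak{P}}(\sqrt{\Delta_{\frak{q}}}) = F_{\mathfrak{P}}(\sqrt{\alpha_{\mathfrak{P}}})$, which by the choice of $\alpha_{\mathfrak{P}}$ in Lemma \ref{QUE} is an unramified quadratic extension of $F_{\mathfrak{P}}$. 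Hence every prime in $\mathcal{T}_2$ is inert in $E_{\frak{q}}$, which in particular takes care of $\mathcal{U}_2$ and shows that the residual primes above $2$ are unramified.

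Finally, (c) for a prime $\mathfrak{P}$ lying in none of $\mathcal{T}_1$, $\mathcal{T}_2$, $\mathcal{R}$, $\{\mathfrak{p},\mathfrak{q}\}$, the factorization $(\ref{deltafactor})$ shows that $v_{\mathfrak{P}}(\Delta_{\frak{q}}) = 0$, and $\mathfrak{P}$ also does not divide $2$ (otherwise it would lie in $\mathcal{T}_2$). Then $\mathfrak{P} \nmid 2\Delta_{\frak{q}}$, and the standard fact that the relative discriminant of $F(\sqrt{\Delta_{\frak{q}}})/F$ divides $4\Delta_{\frak{q}}\mathcal{O}_F$ implies $\mathfrak{P}$ is unramified. (d) The remaining primes are those dividing $\Delta_{\frak{q}}$, i.e., $\mathfrak{p}$, $\mathfrak{q}$, and the primes in $\mathcal{R}$, and these are allowed to ramify by the statement of the lemma. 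Combining (a)--(d) gives (i), and the split/inert statements in (a) and (b) give (ii). The only subtle step, which I consider the main obstacle, is making sure every prime is accounted for by the disjoint decomposition: one must be careful that the auxiliary sets $\mathcal{T}_1$ and $\mathcal{T}_2$ actually absorb all primes dividing $2pm$ (so that Lemma \ref{square} is applicable where the residue characteristic is $2$) while remaining disjoint from $\mathcal{R} \cup \{\mathfrak{p}\}$, which is exactly how they were constructed in the proof of Proposition \ref{InfiniteCMFields}.
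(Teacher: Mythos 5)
Your proposal follows the same route as the paper: establish total negativity via conditions (1) and (4) to get the CM property, then treat the ramification/splitting behavior prime-by-prime using Lemma \ref{square}, with $\mathcal{T}_1$ giving the split primes via conditions (2), (4) and $\mathcal{T}_2$ giving the inert primes via Lemma \ref{QUE} together with conditions (3), (4). Your handling of the residual primes not dividing $2\Delta_{\frak{q}}$ is also the same as the paper's (both invoke the standard discriminant-divides-$4\Delta$ fact, just phrased slightly differently).

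One point worth flagging: in step (d) you observe that the primes dividing $\Delta_{\frak{q}}$ "are allowed to ramify," but you do not show that they in fact do ramify. The paper establishes this explicitly by noting that $\Delta_{\frak{q}}\mathcal{O}_{E_{\frak{q}}} = (\sqrt{\Delta_{\frak{q}}}\,\mathcal{O}_{E_{\frak{q}}})^2$, while each of $\mathfrak{p}$, $\mathfrak{q}$, and $\mathfrak{r}\in\mathcal{R}$ appears to the first power in $\Delta_{\frak{q}}\mathcal{O}_F$; since an odd exponent must become even after extension, each such prime ramifies. Although the literal phrase "ramified only at" reads as a containment, the paper's proof establishes exact equality of the ramification locus, and this stronger conclusion is what is used in the remark immediately following Proposition \ref{InfiniteCMFields} to conclude that the fields $E_{\mathfrak{q}_1}$ and $E_{\mathfrak{q}_2}$ are distinct for $\mathfrak{q}_1 \neq \mathfrak{q}_2$. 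So you should add this one-line argument to close the loop. Everything else in your outline matches the paper's proof.
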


\begin{proof} Since the prime ideals $\frak{p}, \frak{q}$ and $\frak{r} \in \mathcal{R}$ are all distinct,
the identity (\ref{deltafactor}) shows that $\Delta_{\frak{q}}$ is not a perfect square in $F$.
Also, by (1) and (4) we have $\Delta_{\frak{q}}=ab_{\frak{q}} \overset{\times}{\equiv} -1 \pmod{\mathfrak{m}_{\infty}}$,
or equivalently $\Delta_{\frak{q}} \ll 0$. These facts imply
that $E_{\mathfrak{q}}$ is a totally imaginary quadratic extension of $F$, hence a CM field.

Now, since $\Delta_{\frak{q}} \in \mathcal{O}_F$ we have $\sqrt{\Delta_{\frak{q}}} \in \mathcal{O}_{E_{\mathfrak{q}}}$.
Then by (\ref{deltafactor}) we have
\begin{align*}
\mathfrak{p} \mathcal{O}_{E_{\mathfrak{q}}} \mathfrak{q} \mathcal{O}_{E_{\mathfrak{q}}}
\prod_{\frak{r} \in \mathcal{R}}\frak{r}\mathcal{O}_{E_{\mathfrak{q}}} = \Delta_{\frak{q}} \mathcal{O}_{E_{\mathfrak{q}}}
= \left( \sqrt{\Delta_{\frak{q}}} \mathcal{O}_{E_{\mathfrak{q}}} \right)^2.
\end{align*}
This implies that each of the prime ideals of $F$ dividing $\Delta_{\mathfrak{q}}$ is ramified in $E_{\mathfrak{q}}$. Thus, to prove (i),
it remains to show that if $\mathfrak{P}$ is a prime ideal of $F$ not dividing $\Delta_{\mathfrak{q}}$, then $\mathfrak{P}$ is
unramified in $E_{\mathfrak{q}}$.

It is known that if $K$ is a number field and $\alpha$ is a root of the polynomial
$$f(x):= x^2 - \beta \in \mathcal{O}_K[x],$$
then any nonzero prime ideal $\mathfrak{P}$ of $K$ such that $\mathfrak{P}$
does not divide $2\beta$ is unramified in $L:=K(\alpha)$ (see e.g. \cite[Example 6.40, p. 59]{KKS11}). Therefore if $\mathfrak{P}$ is a prime ideal of
$F$ such that $\mathfrak{P}$ does not divide $2\Delta_{\frak{q}}$, then $\mathfrak{P}$ is unramified in $E_{\mathfrak{q}}$.
Thus it suffices to prove that if $\mathfrak{P}$ is a prime ideal of $F$ such that
$\mathfrak{P}$ divides $2$ and $\mathfrak{P}$ does not divide $\Delta_{\mathfrak{q}}$,
then $\mathfrak{P}$ is unramified in $E_{\frak{q}}$.

By (\ref{deltafactor}) we know that the prime ideals of $F$ that divide
$\Delta_{\mathfrak{q}}$ are the primes in the set $\mathcal{R} \cup \{\mathfrak{p}, \mathfrak{q}\}$.
Therefore from the definitions of $\mathcal{T}_1$ and $\mathcal{T}_2$ we see that the
set of prime ideals $\frak{P}$ of $F$ such that $\mathfrak{P}$ divides $2$ and
$\mathfrak{P} \not \in \mathcal{R} \cup \{ \mathfrak{p}, \mathfrak{q} \}$
is a subset of $\mathcal{T}_1 \cup \mathcal{T}_2$. Hence, in the remainder of the proof
we will show that the prime ideals in $\mathcal{T}_1 \cup \mathcal{T}_2$ are unramified in $E_{\mathfrak{q}}$. In fact, we will
show that the prime ideals in $\mathcal{T}_1$ split in $E_{\frak{q}}$ and the prime ideals in $\mathcal{T}_2$ remain inert in $E_{\frak{q}}$.
Since $\mathcal{U}_1 \subset \mathcal{T}_1$ and $\mathcal{U}_2 \subset \mathcal{T}_2$, this will also complete the proof of (ii).

Thus let $\mathfrak{P} \in \mathcal{T}_1 \cup \mathcal{T}_2$
and let $\mathfrak{Q}$ be a prime ideal of $E_{\frak{q}}$ lying above $\mathfrak{P}$. Also, let $\widehat{\mathfrak{P}}$ and $\widehat{\mathfrak{Q}}$
denote the unique prime ideals in the completions $F_{\mathfrak{P}}$ and $E_{\frak{q},\mathfrak{Q}}$, respectively. It is known
that the ramification indices are the same, i.e., we have
\begin{align*}
e(\mathfrak{Q}|\mathfrak{P}) = e(\widehat{\mathfrak{Q}} | \widehat{\mathfrak{P}}).
\end{align*}
We will show that $e(\mathfrak{Q}|\mathfrak{P}) = e(\widehat{\mathfrak{Q}} | \widehat{\mathfrak{P}})=1$.

The minimal polynomial of the primitive element $\sqrt{\Delta_{\frak{q}}}$ of $E_{\frak{q}}$ over $F$ is
$$m_{\Delta_{\frak{q}}}(x):= x^2 - \Delta_{\frak{q}} \in \mathcal{O}_F[x].$$ It is known
that the primes of $E_{\frak{q}}$ lying above $\mathfrak{P}$ are in one to one correspondence with the irreducible factors of $m_{\Delta_{\frak{q}}}(x)$
when considered as a polynomial in $F_{\mathfrak{P}}[x]$ and moreover, if $\mathfrak{Q}$ corresponds to an irreducible factor $m_i(x)$,
then the completion of $E_{\mathfrak{q}}$ at $\mathfrak{Q}$ satisfies
\begin{align*}
E_{\mathfrak{q}, \mathfrak{Q}} \cong \frac{F_{\mathfrak{P}}[x]}{\langle m_{i}(x) \rangle}
\end{align*}
(see for example \cite[Theorem II.6.1, p. 115]{Jan96}).

We have two cases to consider.

\vspace{0.05in}

\textbf{Case 1:} $\mathfrak{P} \in \mathcal{T}_1$. In this case the congruences $(2)$ and $(4)$ satisfied by $a$ and $b_{\frak{q}}$
imply that $\Delta_{\mathfrak{q}} = ab_{\frak{q}} \overset{\times}{\equiv} 1 \pmod{\mathfrak{P}^e}$. Hence by Lemma \ref{square} we conclude that
$F_{\mathfrak{P}}(\sqrt{\Delta_{\frak{q}}}) =  F_{\mathfrak{P}}$.
This implies that there is an element $c \in F_{\mathfrak{P}}$ such that $\Delta_{\frak{q}} = c^2$.
Therefore the polynomial $m_{\Delta_{\frak{q}}}(x)$ factors as $$m_{\Delta_{\frak{q}}}(x) = x^2 - c^2 = (x - c)(x + c)$$
in $F_{\mathfrak{P}}[x]$. Since the prime ideals of $E_{\mathfrak{q}}$ lying over $\mathfrak{P}$ are in one to one correspondence
with the irreducible factors $x - c$ and $x + c$,
and since $E_{\mathfrak{q}} /F$ is a quadratic extension, we see that $\mathfrak{P}$ splits in $E_{\mathfrak{q}}$,
so that $e(\mathfrak{Q} | \mathfrak{P}) = 1$.
\vspace{0.05in}

\textbf{Case 2:} $\mathfrak{P} \in \mathcal{T}_2$. In this case the congruences $(3)$ and $(4)$ satisfied by
$a$ and $b_{\frak{q}}$ imply that
$\Delta_{\mathfrak{q}} = ab_{\frak{q}} \overset{\times}{\equiv} \alpha_{\mathfrak{P}} \pmod{\mathfrak{P}^e}$, or equivalently,
\begin{align*}
\frac{\Delta_{\frak{q}}}{\alpha_{\frak{P}}}  \overset{\times}{\equiv} 1 \pmod{\mathfrak{P}^e}.
\end{align*}
Hence by Lemma \ref{square}, we have $\Delta_{\frak{q}} = c^2 \alpha_{\frak{P}}$ for some $c \in F_{\frak{P}}^{\times}$, which implies that
$$F_{\mathfrak{P}}(\sqrt{\Delta_{\frak{q}}}) =  F_{\mathfrak{P}}(\sqrt{\alpha_{\mathfrak{P}}}).$$ On the other hand,
by Lemma \ref{QUE} we have that $F_{\mathfrak{P}}(\sqrt{\alpha_{\mathfrak{P}}})$ is an
unramified quadratic extension of $F_{\mathfrak{P}}$.
It follows that $m_{\Delta_{\frak{q}}}(x)$ is irreducible in $F_{\mathfrak{P}}[x]$.
Thus $\frak{Q}$ is the only prime ideal
of $E_{\mathfrak{q}}$ lying above $\mathfrak{P}$ and it corresponds to $m_{\Delta_{\frak{q}}}(x)= x^2 - \Delta_{\frak{q}}$. Therefore we have
\begin{align*}
E_{\mathfrak{q}, \mathfrak{Q}} \cong \frac{F_{\mathfrak{P}}[x]}{\langle m_{\Delta_{\frak{q}}}(x) \rangle} \cong F_{\mathfrak{P}}(\sqrt{\Delta_{\frak{q}}}).
\end{align*}
This implies that $E_{\mathfrak{q}, \mathfrak{Q}}$ is an unramified quadratic extension of
$F_{\mathfrak{P}}$, hence $e(\widehat{\mathfrak{Q}} | \widehat{\mathfrak{P}})=1$. Therefore $e(\mathfrak{Q} | \mathfrak{P}) = 1$,
and in particular $\mathfrak{P}$ remains inert in $E_{\mathfrak{q}}$.
\end{proof}

This completes the proof of Proposition \ref{InfiniteCMFields}. \qed

\subsection{Constructing non-abelian CM fields with reflex fields of maximal degree}

In the following theorem we prove that if $E/F$ is a CM extension satisfying 
a certain mild ramification condition, then the reflex fields $E_{\Phi}$ have maximal degree, and moreover, if $n \geq 3$ then $E/\Q$ is 
non-Galois.

\begin{theorem}\label{Shimura}
Let $F$ be a totally real number field of degree $n$.
Let $p \in \Z$ be a prime number that splits in the Galois closure $F^s$ and
let $\mathfrak{p}$ be a prime ideal of $F$ lying above $p$.
Let $d_{F^s}$ be the discriminant of $F^s$ and
$\mathcal{L}$ be a finite set of prime ideals of $F$ not dividing $p d_{F^s}$. Then if $E/F$ is a
CM extension which is ramified only at the prime ideals of $F$ in the set $\mathcal{L} \cup \{ \mathfrak{p}\}$,
the reflex degree $[E_{\Phi}: \Q] = 2^n$ for every CM type $\Phi \in \Phi(E)$.
Moreover, if $n \geq 3$ then $E/\Q$ is non-Galois (hence non-abelian).
\end{theorem}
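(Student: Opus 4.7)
The plan is to reduce the reflex-degree statement to a transitivity statement and then to a rank computation for a certain ``valuation matrix'' at primes of $F^s$ above $p$, exploiting the splitting hypothesis together with the prescribed ramification of $E/F$. By Proposition~\ref{orbit} and Corollary~\ref{orbit2}, the condition $[E_\Phi:\Q]=2^n$ for every $\Phi\in\Phi(E)$ is equivalent to $G^{\CM}$ acting transitively on $\Phi(E)$. Since $E^s\subset\Q^{\CM}$, the $G^{\CM}$-orbits on $\Phi(E)$ agree with the $\op{Gal}(E^s/\Q)$-orbits, so it suffices to show that the subgroup $\op{Gal}(E^s/F^s)$ already acts transitively. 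Fixing a base CM type identifies $\Phi(E)$ with $\{\pm 1\}^n$, and $\op{Gal}(E^s/F^s)$ embeds into $(\Z/2\Z)^n$ acting by coordinatewise translation (switching the two extensions of each embedding $F\hookrightarrow F^s$ to $E^s$); hence transitivity becomes equivalent to $[E^s:F^s]=2^n$.

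Next I would write $E=F(\sqrt{\delta})$ with $\delta\in\mathcal{O}_F$ totally negative, chosen (after modifying by squares) so that $\delta\mathcal{O}_F$ is supported on $\mathcal{L}\cup\{\mathfrak{p}\}$ with $v_{\mathfrak{p}}(\delta)$ odd, corresponding to the fact that $\mathfrak{p}$ appears genuinely in the relative discriminant of $E/F$. Letting $\sigma_1,\dots,\sigma_n$ denote the embeddings $F\hookrightarrow F^s$, we have
$E^s=F^s\bigl(\sqrt{\sigma_1(\delta)},\dots,\sqrt{\sigma_n(\delta)}\bigr)$, so by Kummer theory $[E^s:F^s]=2^k$, where $k$ is the $\mathbb{F}_2$-dimension of the subgroup of $F^{s,\times}/F^{s,\times 2}$ generated by the classes of $\sigma_1(\delta),\dots,\sigma_n(\delta)$. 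Thus the problem reduces to showing that these $n$ classes are $\mathbb{F}_2$-linearly independent.

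The heart of the proof is the valuation matrix $M$ of size $g\times n$ defined by $M_{\mathfrak{P},j}:=v_{\mathfrak{P}}(\sigma_j(\delta))\bmod 2$, where $\mathfrak{P}$ ranges over the $g=[F^s:\Q]$ primes of $F^s$ above $p$. Since no prime in $\mathcal{L}$ lies above $p$, and since $p$ splits completely in $F^s$ (so every ramification index above $p$ is $1$), a direct computation gives $M_{\mathfrak{P},j}=1$ precisely when $\mathfrak{P}\mid \sigma_j(\mathfrak{p})$, and $0$ otherwise. I will then verify two structural properties which force $\op{rank}_{\mathbb{F}_2} M = n$: (a) each row of $M$ has exactly one nonzero entry, and (b) each column of $M$ is nonzero. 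Property (a) is the main obstacle and is precisely where the splitting hypothesis on $p$ is used in an essential way: fixing $\mathfrak{P}$, the $n$ compositions $F\xrightarrow{\sigma_j}F^s\hookrightarrow F^s_{\mathfrak{P}}\cong\Q_p$ are $n$ distinct embeddings $F\hookrightarrow\Q_p$, and since $p$ splits completely in $F$ these embeddings are in bijection with the $n$ primes of $F$ above $p$, so exactly one of them corresponds to $\mathfrak{p}$. Property (b) is immediate, since for each $j$ there are $[F^s:F]$ primes of $F^s$ above $\sigma_j(\mathfrak{p})$. Because each row is supported in a single column, the columns of $M$ have pairwise disjoint supports, hence are linearly independent over $\mathbb{F}_2$, giving $\op{rank}_{\mathbb{F}_2}M=n$ and therefore $k=n$.

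Combining the above yields $[E^s:F^s]=2^n$, hence transitivity, and hence $[E_\Phi:\Q]=2^n$ for every CM type $\Phi$. For the non-Galois assertion, observe that $[E^s:\Q]=[E^s:F^s]\cdot[F^s:\Q]\geq 2^n\cdot n$, which strictly exceeds $2n=[E:\Q]$ whenever $n\geq 3$; therefore $E\subsetneq E^s$, so $E/\Q$ is not Galois, and in particular not abelian.
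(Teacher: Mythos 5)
Your approach is genuinely different from the paper's. The paper proceeds via multiplicativity of the relative different in towers: it proves $\mathfrak{D}(EF^s/F^s)=\mathfrak{D}(E/F)\mathcal{O}_{EF^s}$ (Lemma \ref{differentid}), deduces that $\mathfrak{d}(E^{\sigma_i}F^s/F^s)$ is divisible by $\mathfrak{p}^{\sigma_i}\mathcal{O}_{F^s}$ but coprime to $\mathfrak{p}^{\sigma_j}\mathcal{O}_{F^s}$ for $j\neq i$ (Lemma \ref{relative}), and then climbs the tower $F^s\subset E^{\sigma_1}F^s\subset\cdots\subset E^{\sigma_1}\cdots E^{\sigma_n}F^s$ step by step using ramified versus unramified primes (Lemma \ref{Degree}). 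You instead reduce to the Kummer-theoretic statement that the classes of $\sigma_1(\delta),\ldots,\sigma_n(\delta)$ are $\mathbb{F}_2$-independent in $F^{s,\times}/(F^{s,\times})^2$, and detect this with a valuation matrix at primes above $p$. This is a clean and more elementary reduction; the coprimality fact you need (each prime $\mathfrak{P}$ of $F^s$ above $p$ divides exactly one of the ideals $\sigma_j(\mathfrak{p})\mathcal{O}_{F^s}$) is exactly the content of the paper's Lemma \ref{coprime}. Your deduction of the non-Galois conclusion from $[E^s:\Q]=2^n[F^s:\Q]\geq 2^n n$ is also slightly sharper than the paper's, which only uses $[E^s:\Q]\geq 2^n$ and hence needs $n\geq 3$.

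There is, however, a genuine gap in the $p=2$ case, which the theorem does not exclude. You choose $\delta$ ``so that $v_{\mathfrak{p}}(\delta)$ is odd'', but this may be impossible when $p=2$. Since $p$ splits in $F^s$ we have $F_{\mathfrak{p}}\cong\Q_p$, and when $p=2$ a ramified quadratic extension of $\Q_2$ need not be generated by a uniformizer: if $E_{\mathfrak{P}}/F_{\mathfrak{p}}$ is $\Q_2(\sqrt{-1})/\Q_2$ or $\Q_2(\sqrt{-5})/\Q_2$, then every $\delta$ with $E=F(\sqrt{\delta})$ has $v_{\mathfrak{p}}(\delta)$ even, even though $\mathfrak{p}$ is genuinely ramified. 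In that case the $\mathfrak{p}$-column of your matrix $M$ is entirely zero, both properties (a) and (b) fail, and the rank computation breaks down. (Your stronger assertion that $\delta\mathcal{O}_F$ can be taken to be supported exactly on $\mathcal{L}\cup\{\mathfrak{p}\}$ is also false in general without class-group hypotheses, but as you observe, only the parity conditions are used.) The repair is to replace the mod-$2$ valuation with the image of $\sigma_j(\delta)$ in the full local square-class group $F^{s,\times}_{\mathfrak{P}}/(F^{s,\times}_{\mathfrak{P}})^2\cong(\Z/2\Z)^3$ and argue via ramified versus unramified local behavior of $E^{\sigma_j}F^s/F^s$ at $\mathfrak{P}$; this is in effect what the paper's different/discriminant argument accomplishes, since the relative discriminant records all ramification including wild ramification at $2$.
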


We will prove Theorem \ref{Shimura} using a sequence of five lemmas which are now proved in succession. 

\begin{lemma}\label{compositumid} Let $F$ be a totally real number field of degree $n$.  Let $E/F$ be a CM extension
and $\Phi = \{\sigma_1, \dots, \sigma_n  \} \in \Phi(E)$ be a CM type for $E$. Let $E_{\Phi}$ be the reflex field
of the CM pair $(E,\Phi)$. Then
\begin{align*}
E_\Phi F^s = E^{\sigma_1} \cdots E^{\sigma_n}=E^s.
\end{align*}
\end{lemma}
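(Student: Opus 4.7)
The plan is to establish the two equalities separately, working inside $\overline{\Q}$. For the second equality $E^{\sigma_1}\cdots E^{\sigma_n} = E^s$, I would use that $\op{Hom}(E,\overline{\Q}) = \Phi \cupdot c\Phi$, so $E^s$ is generated by all $\sigma_i(E)$ and all $(c\sigma_i)(E)$. Because $E$ is a CM field, it carries an intrinsic complex conjugation $c_E \in \op{Aut}(E)$ satisfying $c \circ \sigma = \sigma \circ c_E$ for every embedding $\sigma : E \hookrightarrow \overline{\Q}$, and hence $(c\sigma_i)(E) = \sigma_i(c_E(E)) = \sigma_i(E) = E^{\sigma_i}$. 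This collapses the list of generators of $E^s$ to $E^{\sigma_1}, \ldots, E^{\sigma_n}$.

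For the equality $E_\Phi F^s = E^{\sigma_1}\cdots E^{\sigma_n}$, I would first dispose of the easy inclusion. The defining property of a CM type is that $\Phi$ meets each pair $\{\sigma, c\sigma\}$ of extensions of a given embedding of $F$ exactly once, so the restrictions $\sigma_1|_F, \ldots, \sigma_n|_F$ exhaust $\op{Hom}(F,\overline{\Q})$. Consequently $F^s = F^{\sigma_1}\cdots F^{\sigma_n} \subseteq E^{\sigma_1}\cdots E^{\sigma_n}$. By Proposition \ref{ReflexField}(ii), $E_\Phi$ is generated over $\Q$ by the type traces $\op{Tr}_\Phi(a) = \sum_i \sigma_i(a)$, each of which clearly lies in $E^{\sigma_1}\cdots E^{\sigma_n}$, giving $E_\Phi F^s \subseteq E^{\sigma_1}\cdots E^{\sigma_n}$.

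For the reverse inclusion I would use the explicit form of a CM extension. Write $E = F(\sqrt{\Delta})$ for some totally negative $\Delta \in F^\times$, and set $\beta_i := \sigma_i(\sqrt{\Delta})$, so that $E^{\sigma_1}\cdots E^{\sigma_n} = F^s(\beta_1,\ldots,\beta_n)$. It therefore suffices to show that each $\beta_k$ lies in $E_\Phi F^s$. Fix a $\Q$-basis $\alpha_1,\ldots,\alpha_n$ of $F$. The type traces $\op{Tr}_\Phi(\alpha_j\sqrt{\Delta}) = \sum_i \sigma_i(\alpha_j)\beta_i$ lie in $E_\Phi$ for $j = 1, \ldots, n$, and the $F^s$-coefficient matrix $M := (\sigma_i(\alpha_j))_{i,j}$ satisfies $\det(M)^2 = d_{F/\Q} \neq 0$, hence $M$ is invertible over $F^s$. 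Multiplying the column vector of type traces by $M^{-1}$ expresses each $\beta_k$ as an $F^s$-linear combination of elements of $E_\Phi$, so $\beta_k \in E_\Phi F^s$.

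The only non-formal step is the last one, and even there the obstacle is mild: the invertibility of $M$ is the classical nonvanishing of the discriminant of $F/\Q$, and the rest is a direct linear-algebraic manipulation. All other steps follow immediately from the definitions, the CM structure of $E$, and the bijection between $\Phi$ and $\op{Hom}(F,\overline{\Q})$ induced by restriction.
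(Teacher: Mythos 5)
Your proof is correct and uses the same central device as the paper: the type traces $\op{Tr}_\Phi(\alpha_j \cdot)$ together with the invertibility of the conjugate matrix $(\sigma_i(\alpha_j))$, whose squared determinant is the nonzero discriminant. The only real difference is organizational: you prove both inclusions between $E_\Phi F^s$ and $E^{\sigma_1}\cdots E^{\sigma_n}$ directly and reduce the hard direction to the single generator $\sqrt{\Delta}$, whereas the paper shows $\sigma_j(c)\in E_\Phi F^s$ for every $c\in E$ via Cramer's rule, then deduces equality by observing that both sides sit inside $E^s$ and that the middle compositum equals $E^s$; these are the same argument rearranged.
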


\begin{proof} We first prove that
\begin{align*}
E^{\sigma_1} \cdots E^{\sigma_n} \subseteq E_\Phi F^s.
\end{align*}
It suffices to show that $\sigma_j(c) \in E_\Phi F^s$ for all $c \in E$ and $j=1, \ldots, n$.
Let $\alpha_1, \dots, \alpha_n$ be an integral basis for $F$. By Proposition \ref{ReflexField} (ii), the reflex field of
the CM pair $(E, \Phi)$ is given by
\begin{align*}
E_{\Phi} = \Q \left( \{ \op{Tr}_{\Phi}(a) \suchthat a \in E \} \right),
\end{align*}
where $ \op{Tr}_{\Phi}(a) = \sum \limits_{j = 1}^{n} \sigma_{j}(a)$. Then for all $c \in E$ and $i = 1, \dots, n$, we have
\begin{align*}
\op{Tr}_{\Phi}(c \alpha_i) = \sum \limits_{j = 1}^{n} \sigma_{j}(c \alpha_i) = \sum_{j=1}^n \sigma_j(\alpha_i) \sigma_j(c) \in E_{\Phi}.
\end{align*}
In particular, there are elements $\beta_i \in E_{\Phi}$ such that
\begin{align*}
\sum_{j=1}^n \sigma_j(\alpha_i) \sigma_j(c) = \beta_i
\end{align*}
for $i =1, \dots, n$. This yields the linear system
\begin{align*}
\begin{bmatrix}
\sigma_1(\alpha_1) & \sigma_2(\alpha_1) & \cdots & \sigma_n(\alpha_1) \\
\sigma_1(\alpha_2) & \sigma_2(\alpha_2) & \cdots & \sigma_n(\alpha_2) \\
\vdots & \vdots & \ddots & \vdots \\
\sigma_1(\alpha_n) & \sigma_2(\alpha_n) & \cdots & \sigma_n(\alpha_n) \\
\end{bmatrix}
\begin{bmatrix}
\sigma_1(c)\\
\sigma_2(c) \\
\vdots \\
\sigma_n(c)\\
\end{bmatrix}
=
\begin{bmatrix}
\beta_1\\
\beta_2 \\
\vdots \\
\beta_n\\
\end{bmatrix}.
\end{align*}
The matrix $[\sigma_{j}(\alpha_i)] \in M^{n \times n}(F^s)$, and
it is invertible since $\det{[\sigma_{j}(\alpha_i)]^2} = d_F \neq 0$. It follows
from Cramer's rule that
\begin{align}\label{sigmaid}
\sigma_j(c) =
\frac{
\det{
\begin{bmatrix}
\sigma_1(\alpha_1) & \cdots & \sigma_{j-1}(\alpha_1) & \beta_1 & \sigma_{j+1}(\alpha_1) & \cdots  & \sigma_n(\alpha_1) \\
\vdots &  & \vdots & \vdots &\vdots &   & \vdots \\
\sigma_1(\alpha_n) & \cdots & \sigma_{j-1}(\alpha_n) & \beta_n & \sigma_{j+1}(\alpha_n) & \cdots  & \sigma_n(\alpha_n) \\
\end{bmatrix}}
}{
\det{
\begin{bmatrix}
\sigma_1(\alpha_1) & \cdots & \sigma_n(\alpha_1) \\
\vdots & \ddots & \vdots \\
\sigma_1(\alpha_n) & \cdots & \sigma_n(\alpha_n) \\
\end{bmatrix}}
}
\end{align}
for $j = 1, \dots, n$. Since $\sigma_j(\alpha_i) \in F^s$ and $\beta_i \in E_{\Phi}$ for $i, j = 1, \dots, n$,
the denominator in (\ref{sigmaid}) is in $F^s$ and the numerator is in
$ E_{\Phi} F^s $. Therefore, $\sigma_j(c) \in  E_{\Phi} F^s$
for all $c \in E$ and $j = 1, \dots, n$, which implies that
\begin{align*}
E^{\sigma_1} \cdots E^{\sigma_n} \subseteq  E_{\Phi} F^s.
\end{align*}
On the other hand, since the compositum of all the conjugate fields of a number field is equal
to its Galois closure, and since complex conjugation is an automorphism of $E$ that commutes with every embedding (see \cite[Proposition 5.11]{Shi94}),
we have $E^{\sigma_1} \cdots E^{\sigma_n} = E^s$. Therefore, since
$ E_{\Phi} F^s  \subseteq E^s$, we conclude that
\begin{align*}
 E_{\Phi} F^s  = E^{\sigma_1} \cdots E^{\sigma_n} = E^s.
\end{align*}
\end{proof}

\begin{lemma}\label{coprime} Let $F$ be a totally real number field of degree $n$.
Let $p \in \Z$ be a prime number that splits in the Galois closure $F^s$ and
let $\mathfrak{p}$ be a prime ideal of $F$ lying above $p$.  Let $E/F$ be a CM extension
and $\Phi = \{\sigma_1, \dots, \sigma_n  \} \in \Phi(E)$ be a CM type for $E$.
Then the ideals $\mathfrak{p}^{\sigma_1} \mathcal{O}_{F^s}, \ldots, \mathfrak{p}^{\sigma_n}\mathcal{O}_{F^s}$
are pairwise relatively prime.
\end{lemma}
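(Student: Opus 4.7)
The plan is to unpack the complete splitting hypothesis at the level of the Galois closure $F^s$ and translate the statement into a disjointness statement about cosets. First, I would observe that since $F$ is totally real and $E/F$ is a CM extension, complex conjugation $c$ fixes $F$ pointwise, so $(c\sigma_i)|_F = \sigma_i|_F$ for each $i$. Combined with the fact that $\Phi \bigcupdot c\Phi = \op{Hom}(E,\overline{\Q})$, it follows that the $n$ restrictions $\sigma_1|_F,\dots,\sigma_n|_F$ are pairwise distinct, and hence they give a complete list of the $n$ embeddings of $F$ into $\overline{\Q}$. Each $\sigma_i|_F$ factors through $F^s$ and extends to an element $\tau_i \in \op{Gal}(F^s/\Q)$, and the $\tau_i$ form a complete system of coset representatives for $\op{Gal}(F^s/F)$ in $\op{Gal}(F^s/\Q)$.

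Next, I would use the hypothesis that $p$ splits completely in $F^s$. This means that in $\mathcal{O}_{F^s}$ we have $p\mathcal{O}_{F^s}=\prod \mathfrak{P}$, a product of $[F^s:\Q]$ distinct primes each having decomposition group $D_{\mathfrak{P}} = \{1\}$ in $\op{Gal}(F^s/\Q)$. Fix one such prime $\mathfrak{P}_0 \mid \mathfrak{p}$. Then the set of primes of $F^s$ dividing $\mathfrak{p}\mathcal{O}_{F^s}$ is the $\op{Gal}(F^s/F)$-orbit of $\mathfrak{P}_0$, so
\begin{align*}
\mathfrak{p}^{\sigma_i}\mathcal{O}_{F^s} \;=\; \tau_i\bigl(\mathfrak{p}\mathcal{O}_{F^s}\bigr) \;=\; \prod_{h\in \op{Gal}(F^s/F)} \tau_i h(\mathfrak{P}_0),
\end{align*}
and the set of prime divisors of $\mathfrak{p}^{\sigma_i}\mathcal{O}_{F^s}$ is exactly the orbit $\tau_i\op{Gal}(F^s/F)\cdot \mathfrak{P}_0$.

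Finally, the coprimality follows by a triviality-of-stabilizer argument. Suppose $\tau_i h_1 (\mathfrak{P}_0) = \tau_j h_2 (\mathfrak{P}_0)$ for some $h_1,h_2\in \op{Gal}(F^s/F)$. Then $h_2^{-1}\tau_j^{-1}\tau_i h_1 \in D_{\mathfrak{P}_0} = \{1\}$ by complete splitting, so $\tau_j^{-1}\tau_i \in \op{Gal}(F^s/F)$, forcing $\tau_i\op{Gal}(F^s/F)=\tau_j\op{Gal}(F^s/F)$ and hence $i=j$. Therefore the orbits $\tau_i\op{Gal}(F^s/F)\cdot \mathfrak{P}_0$ are pairwise disjoint as $i$ ranges over $1,\dots,n$, which is exactly the assertion that $\mathfrak{p}^{\sigma_1}\mathcal{O}_{F^s},\dots,\mathfrak{p}^{\sigma_n}\mathcal{O}_{F^s}$ are pairwise relatively prime.

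The only point requiring real attention is the first paragraph — checking that the $\sigma_i|_F$ exhaust $\op{Hom}(F,\overline{\Q})$ and extending them consistently to $\op{Gal}(F^s/\Q)$ so that the coset picture is valid; once this is in hand, the complete splitting of $p$ in $F^s$ (yielding trivial decomposition groups) does all the real work.
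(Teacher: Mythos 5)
Your proof is correct and is essentially the same as the paper's. The paper writes the prime factors of $\mathfrak{p}^{\sigma_i}\mathcal{O}_{F^s}$ as $\{\tau(\mathfrak{P}) : \tau \in G_i\widetilde{\sigma_i}\}$ with $G_i = \op{Gal}(F^s/F^{\sigma_i})$, which is exactly your left coset $\tau_i\op{Gal}(F^s/F)$ since $G_i\widetilde{\sigma_i} = \widetilde{\sigma_i}\op{Gal}(F^s/F)$; and where you invoke triviality of the decomposition group and distinctness of the restrictions $\tau_i|_F$, the paper equivalently uses distinctness of the $\sigma(\mathfrak{P})$ and picks $x\in F$ with $\sigma_i(x)\neq\sigma_j(x)$ to derive the contradiction.
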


\begin{proof}
Suppose that $\mathfrak{P}$ is a prime of $F^s$ lying above $\mathfrak{p}$.
Thus $\mathfrak{P}$ also lies above $p \in \Z$. Since $F^s/\Q$ is Galois, we have
\begin{align}\label{pdecomp}
p \mathcal{O}_{F^s} = \prod_{\sigma \in \Gal(F^s/\Q)} \sigma(\mathfrak{P}).
\end{align}
Moreover, since $p$ splits in $F^s$, then $\sigma(\mathfrak{P}) \neq \tau(\mathfrak{P})$
for any $\sigma, \tau \in \Gal(F^s/\Q)$ with $\sigma \neq \tau$.

Now, let $G_i:= \Gal(F^s / F^{\sigma_i})$ for $i = 1, \dots, n$. For each $i=1, \ldots , n$ we have that $\mathfrak{p}^{\sigma_i}$
is a prime ideal of $F^{\sigma_i}$ lying above $p$. Hence $\mathfrak{p}^{\sigma_i}$ also splits in $F^s$.
Let $\widetilde{\sigma_i} \in  \Gal(F^s/\Q)$ be an extension of the embedding
$\sigma_i|_F: F \hookrightarrow F^s$, i.e. $\widetilde{\sigma_i}|_F = \sigma_i|_F$.
It follows that $\widetilde{\sigma_i}(\mathfrak{P})$ lies above $\mathfrak{p}^{\sigma_i}$, and since the extension $F^s/F^{\sigma_i}$ is Galois, we have
\begin{align*}
\mathfrak{p}^{\sigma_i} \mathcal{O}_{F^s} = \prod_{\sigma \in G_i} \sigma( \widetilde{\sigma_i}(\mathfrak{P})) = \prod_{\tau \in G_i \widetilde{\sigma_i}} \tau(\mathfrak{P}).
\end{align*}
Since $G_i\widetilde{\sigma_i} \subseteq \Gal(F^s/\Q)$ for $i=1, \ldots, n$ and
$\sigma(\mathfrak{P}) \neq \tau(\mathfrak{P})$ for any $\sigma, \tau \in \Gal(F^s/\Q)$ with $\sigma \neq \tau$,
it suffices to prove that  $G_i \widetilde{\sigma_i} \cap G_j \widetilde{\sigma_j} = \varnothing$ for $i \neq j$.

Suppose by contradiction that there exists an element $\sigma \in G_i \widetilde{\sigma_i} \cap G_j \widetilde{\sigma_j}$ for $i \neq j$.
Then there are elements $\tau_i \in G_i$ and $\tau_j \in G_j$ such that
$\sigma = \tau_i \widetilde{\sigma_i}$ and $\sigma = \tau_j \widetilde{\sigma_j}$.
Since $\{\sigma_1, \ldots, \sigma_n\}$ is a CM type for $E$, then $\op{Hom}(F, \overline{\Q}) = \{ \sigma_1|_{F}, \ldots, \sigma_n|_{F}\}$
and therefore the embeddings $\sigma_i|_F$ and $\sigma_j|_F$ are different. Hence there
is an element $x\in F$ such that $\sigma_i(x) \neq \sigma_j(x)$. Since
$\sigma_i(x) \in F^{\sigma_i}$ and $\tau_i|_{F^{\sigma_i}} = \textrm{id}_{F^{\sigma_i}}$, it follows that
\begin{align*}
\sigma_i(x) = \widetilde{\sigma_i}(x) = \tau_i( \widetilde{\sigma_i}(x)) = \tau_j(\widetilde{\sigma_j}(x)) = \widetilde{\sigma_j}(x) = \sigma_j(x),
\end{align*}
which is a contradiction. Thus for $i \neq j$, we have $G_i \widetilde{\sigma_i} \cap G_j \widetilde{\sigma_j} = \varnothing$, which
shows that the ideals $\mathfrak{p}^{\sigma_i} \mathcal{O}_F^s$ and $\mathfrak{p}^{\sigma_i} \mathcal{O}_F^s$ are relatively prime.
\end{proof}

For an extension of number fields $L/K$, let $\frak{D}(L/K)$ be the relative different, which is an integral ideal of $L$.

\begin{lemma}\label{differentid} Let $F$ be a totally real number field.
Let $p \in \Z$ be a prime number that splits in the Galois closure $F^s$ and
let $\mathfrak{p}$ be a prime ideal of $F$ lying above $p$. Let $\mathcal{L}$ be a finite set of prime ideals of $F$ not dividing
$p d_{F^s}$. Let $E/F$ be a
CM extension which is ramified only at the prime ideals of $F$ in the set $\mathcal{L} \cup \{ \mathfrak{p}\}$. Then
\begin{align*}
\frak{D}(E F^s/F^s) = \frak{D}(E/F)\mathcal{O}_{EF^s}.
\end{align*}
\end{lemma}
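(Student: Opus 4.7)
The plan is to establish the equality prime by prime, reducing to a local computation. Since both sides are integral ideals of $\mathcal{O}_{EF^s}$, it suffices to show that for every prime $\mathfrak{P}$ of $EF^s$, the $\mathfrak{P}$-adic valuations of both sides agree. Write $\mathfrak{Q} = \mathfrak{P} \cap F^s$, $\mathfrak{P}_E = \mathfrak{P} \cap E$, and $\mathfrak{q} = \mathfrak{P} \cap F$; then using the tower law for the different in $F \subset E \subset EF^s$ versus $F \subset F^s \subset EF^s$, the desired identity at $\mathfrak{P}$ reduces to showing
\begin{align*}
d(\mathfrak{P}/\mathfrak{Q}) = e(\mathfrak{P}/\mathfrak{P}_E) \, d(\mathfrak{P}_E/\mathfrak{q}),
\end{align*}
where $d(\cdot/\cdot)$ and $e(\cdot/\cdot)$ denote the different exponent and ramification index.

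The crucial input is the ramification hypothesis. I would first verify that every prime of $F$ ramifying in $E/F$ is unramified in $F^s/F$: the primes in $\mathcal{L}$ do not divide $d_{F^s}$ and hence are unramified in $F^s/\mathbb{Q}$, while $\mathfrak{p}$ lies over $p$ which splits in $F^s$, so $\mathfrak{p}$ is unramified in $F^s/F$ as well. Thus for each $\mathfrak{q}$, at least one of the extensions $E/F$ or $F^s/F$ is unramified at $\mathfrak{q}$.

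Next, I would pass to completions, using the identification $(EF^s)_\mathfrak{P} = E_{\mathfrak{P}_E}\cdot F^s_\mathfrak{Q}$ of local fields inside a fixed algebraic closure of $F_\mathfrak{q}$, and split into two cases. If $\mathfrak{q}$ is unramified in $E/F$, then $E_{\mathfrak{P}_E} \subseteq F_\mathfrak{q}^{\mathrm{ur}}$, so the compositum lies in $F^s_\mathfrak{Q} \cdot F_\mathfrak{q}^{\mathrm{ur}} \subseteq (F^s_\mathfrak{Q})^{\mathrm{ur}}$, making $(EF^s)_\mathfrak{P}/F^s_\mathfrak{Q}$ unramified and both sides of the local equality zero. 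If instead $\mathfrak{q}$ is ramified in $E/F$, then by the previous paragraph $F^s_\mathfrak{Q}/F_\mathfrak{q}$ is unramified, so $F^s_\mathfrak{Q} \subseteq F_\mathfrak{q}^{\mathrm{ur}} \subseteq E_{\mathfrak{P}_E}^{\mathrm{ur}}$ and thus $(EF^s)_\mathfrak{P}/E_{\mathfrak{P}_E}$ is unramified; then the tower law for the different applied to $F_\mathfrak{q} \subseteq E_{\mathfrak{P}_E} \subseteq (EF^s)_\mathfrak{P}$ gives $d(\mathfrak{P}/\mathfrak{q}) = e(\mathfrak{P}/\mathfrak{P}_E)\,d(\mathfrak{P}_E/\mathfrak{q})$, which combined with $d(\mathfrak{Q}/\mathfrak{q})=0$ and the other tower law yields the required identity.

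The main obstacle is the bookkeeping around the local fields: one must verify cleanly that the completion of a compositum equals the compositum of the completions in the relevant sense and that unramified extensions are preserved under compositum and base change. Once this is in place, assembling the local identities over all $\mathfrak{P}$ gives the global equality $\mathfrak{D}(EF^s/F^s) = \mathfrak{D}(E/F)\mathcal{O}_{EF^s}$.
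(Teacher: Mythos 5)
Your proof is correct. The crucial observation in both your argument and the paper's is the same: the ramification hypothesis forces the ramification loci of $E/F$ and $F^s/F$ to be disjoint (primes in $\mathcal{L}$ avoid $d_{F^s}$, and $\mathfrak{p}$ lies over a prime $p$ that splits completely in $F^s$). What differs is how that observation is exploited. The paper works globally at the level of ideals: it writes the tower identity $\mathfrak{D}(EF^s/F^s)\,\mathfrak{D}(F^s/F) = \mathfrak{D}(EF^s/E)\,\mathfrak{D}(E/F)$ in $\mathcal{O}_{EF^s}$, shows the two "base" differents $\mathfrak{D}(F^s/F)$ and $\mathfrak{D}(E/F)$ are coprime there, and then invokes Ribenboim's divisibility $\mathfrak{D}(EF^s/F^s) \mid \mathfrak{D}(E/F)\mathcal{O}_{EF^s}$ (and its mirror) to transfer coprimality upstairs, after which unique factorization finishes the job. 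You instead localize at each prime $\mathfrak{P}$ of $EF^s$ and compute different exponents directly: in the unramified case both sides vanish, and in the ramified case the inclusion $F^s_\mathfrak{Q} \subseteq F_\mathfrak{q}^{\mathrm{ur}} \subseteq E_{\mathfrak{P}_E}^{\mathrm{ur}}$ makes $(EF^s)_\mathfrak{P}/E_{\mathfrak{P}_E}$ unramified, and the two tower laws for the different exponent then give the equality. Effectively you reprove the specific instance of Ribenboim's lemma that is needed, so your argument is more self-contained and elementary; the paper's is shorter because it outsources the local analysis to the cited result. Both are complete and correct, and the bookkeeping you flag (completion of a compositum equals the compositum of the completions, and stability of unramifiedness under base change and compositum) is standard and unproblematic.
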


\begin{proof} We have the following towers of fields.
\begin{equation*}
\begin{tikzcd}
\, & EF^s \arrow[-]{dl} \arrow[-]{dr} & \\
F^s \arrow[-]{dr} & & E \arrow[-]{dl}\\
 & F &
\end{tikzcd}
\end{equation*}
Since the relative different is multiplicative in towers, we have the identity
\begin{align}\label{DifferentProd}
\mathfrak{D}(EF^s/F^s) \mathfrak{D} (F^s/F) = \mathfrak{D}(EF^s/E) \mathfrak{D}(E/F).
\end{align}
We will prove that $\mathfrak{D}(EF^s/F^s)$ and $\mathfrak{D}(EF^s/E)$ are relatively prime,
and that $\mathfrak{D} (F^s/F)$ and $ \mathfrak{D}(E/F)$ are relatively prime as ideals in $\mathcal{O}_{EF^s}$. Then
(\ref{DifferentProd}) would imply that
\begin{align}\label{DifferentEq}
\mathfrak{D}(EF^s/F^s) = \mathfrak{D}(E/F) \mathcal{O}_{EF^s}.
\end{align}

First, we prove that $\mathfrak{D} (F^s/F)$ and $\mathfrak{D}(E/F)$ are relatively prime as ideals in $\mathcal{O}_{EF^s}$. To see this, suppose by
contradiction that there is a prime ideal $\mathfrak{P}_{EF^s}$ of $\mathcal{O}_{EF^s}$ such that
\begin{align*}
\mathfrak{P}_{EF^s} | \mathfrak{D}(F^s/F) \mathcal{O}_{EF^s} \quad \text{and} \quad \mathfrak{P}_{EF^s} | \mathfrak{D}(E/F) \mathcal{O}_{EF^s}.
\end{align*}
Define the prime ideals $\mathfrak{P}_F:= \mathfrak{P}_{EF^s} \cap \mathcal{O}_F$,
$\mathfrak{P}_{F^s}:= \mathfrak{P}_{EF^s} \cap \mathcal{O}_{F^s}$ and $\mathfrak{P}_{E}:= \mathfrak{P}_{EF^s} \cap \mathcal{O}_{E}$.
Then $\mathfrak{P}_{F^s}$ is a prime in $F^s$ that divides $\mathfrak{D}(F^s/F)$ and
hence $\mathfrak{P}_F=\mathfrak{P}_{F^s} \cap \mathcal{O}_F$
ramifies in the extension $F^s / F$. Similarly, $\mathfrak{P}_{E}$
is a prime ideal of $E$ that divides $\mathfrak{D}(E/F)$ and hence $\mathfrak{P}_F=\mathfrak{P}_E \cap \mathcal{O}_F$ ramifies in the extension $E/F$.

Now, since the only primes of $F$ that ramify in $E$ are the primes in the set $\mathcal{L} \cup \{ \mathfrak{p} \}$, it follows that
$\mathfrak{P}_F = \mathfrak{p}$ or $\mathfrak{P}_F = \mathfrak{l}$ for some $\mathfrak{l} \in \mathcal{L}$. We will see now that each of these two
possibilities leads to a contradiction. If $\mathfrak{P}_F = \mathfrak{p}$, then $\mathfrak{p}$ would be
ramified in $F^s$. But this would contradict the fact that $p$ splits in $F^s$, since $\mathfrak{p}$ lies above $p$.
On the other hand, if $\mathfrak{P}_F = \mathfrak{l}$ for some $\mathfrak{l} \in \mathcal{L}$, then $\mathfrak{l}$ would be ramified in $F^s$. Hence the
rational prime $\ell$ such that $\ell \Z = \mathfrak{l} \cap \Z$
would be ramified in $F^s$, which implies that  $\ell$ divides $d_{F^s}$ and hence that $\mathfrak{l}$ divides $d_{F^s}$. However,
this is a contradiction since we assumed that the prime ideals in the set $\mathcal{L}$ do not divide $pd_{F^s}$.
Thus $\mathfrak{D} (F^s/F)$ and $\mathfrak{D}(E/F)$ are relatively prime as ideals in $\mathcal{O}_{EF^s}$, as claimed.

Next, we prove that $\mathfrak{D}(EF^s/F^s)$ and $\mathfrak{D}(EF^s/E)$ are relatively prime.  By \cite[Section 13.2, \textbf{U}. (1), p. 253]{Rib01}, we have that
$\mathfrak{D}(EF^s/F^s)|\mathfrak{D}(E/F)\mathcal{O}_{EF^s}$ and $\mathfrak{D}(EF^s/E)|\mathfrak{D} (F^s/F)\mathcal{O}_{EF^s}$. Since we proved that
$\mathfrak{D}(E/F)\mathcal{O}_{EF^s}$ and $\mathfrak{D} (F^s/F)\mathcal{O}_{EF^s}$ are relatively prime, it follows
that $\mathfrak{D}(EF^s/F^s)$ and $\mathfrak{D}(EF^s/E)$ are relatively prime. This completes the proof of the lemma.

\end{proof}

For an extension of number fields $L/K$, let $\frak{d}(L/K)$ be the relative discriminant, which is an integral ideal of $K$.

\begin{lemma}\label{relative} Let $F$ be a totally real number field of degree $n$.
Let $p \in \Z$ be a prime number that splits in the Galois closure $F^s$ and
let $\mathfrak{p}$ be a prime ideal of $F$ lying above $p$. Let $\mathcal{L}$ be a finite set of prime ideals of $F$ not dividing
$p d_{F^s}$. Let $E/F$ be a
CM extension which is ramified only at the prime ideals of $F$ in the set $\mathcal{L} \cup \{ \mathfrak{p}\}$. Let
$\Phi = \{\sigma_1, \dots, \sigma_n  \} \in \Phi(E)$ be a CM type for $E$. Then the
relative discriminant $\frak{d}(E^{\sigma_i}F^s/F^s)$ is divisible by $\frak{p}^{\sigma_i}\mathcal{O}_{F^s}$, but relatively prime
to $\frak{p}^{\sigma_j}\mathcal{O}_{F^s}$ for $j \neq i$.
\end{lemma}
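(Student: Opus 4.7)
The plan is to reduce to Lemma \ref{differentid} applied to the conjugate CM extension $E^{\sigma_i}/F^{\sigma_i}$, and then read off the divisibility claim from the behavior of primes of $F^s$ under restriction to $F^{\sigma_i}$. The key observation is that $\sigma_i$ is an isomorphism of number fields carrying $F$ to $F^{\sigma_i} \subset F^s$, so $E^{\sigma_i}/F^{\sigma_i}$ is again a CM extension whose Galois closure of the base is the same field $F^s$, and whose set of ramified primes is $\sigma_i(\mathcal{L}) \cup \{\mathfrak{p}^{\sigma_i}\}$. Since $\sigma_i$ fixes the rational prime below each of these ideals, the hypotheses of Lemma \ref{differentid} transfer verbatim: $p$ splits in $F^s$, $\mathfrak{p}^{\sigma_i}$ lies above $p$, and each $\mathfrak{l}^{\sigma_i}$ with $\mathfrak{l} \in \mathcal{L}$ lies above a rational prime $\ell \neq p$ with $\ell \nmid d_{F^s}$. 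Hence Lemma \ref{differentid} gives
\begin{align*}
\mathfrak{D}(E^{\sigma_i}F^s/F^s) = \mathfrak{D}(E^{\sigma_i}/F^{\sigma_i})\, \mathcal{O}_{E^{\sigma_i}F^s}.
\end{align*}

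Next I would translate this into a statement about the relative discriminant $\mathfrak{d}(E^{\sigma_i}F^s/F^s)$, which is the ideal of $F^s$ whose prime divisors are precisely the primes of $F^s$ that ramify in $E^{\sigma_i}F^s$. The displayed identity for the different shows that a prime $\mathfrak{Q}$ of $F^s$ ramifies in $E^{\sigma_i}F^s$ if and only if $\mathfrak{Q} \cap \mathcal{O}_{F^{\sigma_i}}$ ramifies in $E^{\sigma_i}$, i.e., lies in $\sigma_i(\mathcal{L}) \cup \{\mathfrak{p}^{\sigma_i}\}$. Thus the prime divisors of $\mathfrak{d}(E^{\sigma_i}F^s/F^s)$ in $F^s$ are exactly the primes of $F^s$ lying above $\sigma_i(\mathcal{L}) \cup \{\mathfrak{p}^{\sigma_i}\}$.

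For the divisibility statement, every prime $\mathfrak{Q}$ of $F^s$ above $\mathfrak{p}^{\sigma_i}$ ramifies in $E^{\sigma_i}F^s$ and therefore appears in $\mathfrak{d}(E^{\sigma_i}F^s/F^s)$. Because $p$ splits completely in $F^s$, the factorization
\begin{align*}
\mathfrak{p}^{\sigma_i}\mathcal{O}_{F^s} = \prod_{\mathfrak{Q} \mid \mathfrak{p}^{\sigma_i}} \mathfrak{Q}
\end{align*}
is squarefree, so the product of these $\mathfrak{Q}$ divides $\mathfrak{d}(E^{\sigma_i}F^s/F^s)$, yielding $\mathfrak{p}^{\sigma_i}\mathcal{O}_{F^s} \mid \mathfrak{d}(E^{\sigma_i}F^s/F^s)$. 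For the coprimality assertion, I need to check that no prime of $F^s$ above $\mathfrak{p}^{\sigma_j}$ (for $j \neq i$) appears in $\mathfrak{d}(E^{\sigma_i}F^s/F^s)$. Such a prime would have to lie above $\mathfrak{p}^{\sigma_i}$ or above some $\sigma_i(\mathfrak{l})$ with $\mathfrak{l} \in \mathcal{L}$; the first is excluded by Lemma \ref{coprime}, which asserts that $\mathfrak{p}^{\sigma_i}\mathcal{O}_{F^s}$ and $\mathfrak{p}^{\sigma_j}\mathcal{O}_{F^s}$ are coprime, and the second is excluded because any prime of $F^s$ above $\mathfrak{p}^{\sigma_j}$ lies above $p$, whereas $\sigma_i(\mathfrak{l})$ lies above the rational prime $\ell$ below $\mathfrak{l}$, which satisfies $\ell \neq p$ by the hypothesis that $\mathcal{L}$ contains no prime dividing $pd_{F^s}$.

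There is no real obstacle in this argument beyond bookkeeping; the content is entirely carried by Lemma \ref{differentid} and Lemma \ref{coprime}. The one step that deserves a bit of care is verifying that transferring $E/F$ to $E^{\sigma_i}/F^{\sigma_i}$ preserves the divisibility hypothesis on $\mathcal{L}$, which holds because $\sigma_i$ is a $\Q$-algebra embedding and so preserves residue characteristics. Apart from that, everything is a direct translation between primes of $F^s$ and their restrictions to the subfield $F^{\sigma_i}$.
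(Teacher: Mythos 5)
Your proof is correct and rests on the same two inputs as the paper's argument: Lemma \ref{differentid} (identifying $\mathfrak{D}(E^{\sigma_i}F^s/F^s)$ with the extension of a relative different), the complete splitting of $p$ in $F^s$, and Lemma \ref{coprime} together with the residue-characteristic separation of $\mathcal{L}$ from $p$. The only organizational difference is that you verify the hypotheses of Lemma \ref{differentid} transfer to the conjugate extension $E^{\sigma_i}/F^{\sigma_i}$ and apply it there directly, whereas the paper applies Lemma \ref{differentid} once to $E/F$, writes out an explicit prime factorization of $\mathfrak{d}(EF^s/F^s)$, and then conjugates that factorization by an extension $\widetilde{\sigma_i}$ of $\sigma_i$ to $EF^s$; the two routes are equivalent since $\sigma_i$ carries the whole setup to the conjugate one isomorphically.
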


\begin{proof} We first prove the following claim.
\vspace{0.05in}

\textbf{Claim}. \textit{The relative different $\frak{D}(E/F)\mathcal{O}_{EF^s}$ is divisible by the primes of $EF^s$ lying
above the primes in the set $\mathcal{L} \cup \{\frak{p}\}$, and by no other primes of $EF^s$.}
\vspace{0.05in}

\textbf{Proof of the Claim}. Since the primes in the set $\mathcal{L} \cup \{\frak{p}\}$
are the only primes of $F$ which ramify in $E$, we have
\begin{align*}
\frak{d}(E/F)=\frak{p}^{a_{\frak{p}}}\prod_{\frak{l} \in \mathcal{L}}\frak{l}^{a_{\frak{l}}}
\end{align*}
for some positive integers $a_{\frak{p}}$ and $a_{\frak{l}}$ for $\frak{l} \in \mathcal{L}$. Moreover,
since $E/F$ is quadratic, there is a prime ideal $\frak{P}$ of $E$ such that $\frak{p}\mathcal{O}_E=\frak{P}^2$
and a set of prime ideals $\{\frak{P}_{\frak{l}} \suchthat \frak{l} \in \mathcal{L}\}$ of $E$
such that $\frak{l}\mathcal{O}_E=\frak{P}_{\frak{l}}^2$ for each $\frak{l} \in \mathcal{L}$.
Therefore, the relative different factors as
\begin{align*}
\frak{D}(E/F)=\frak{P}^{u_{\frak{P}}}\prod_{\frak{l} \in \mathcal{L}}\frak{P}_{\frak{l}}^{u_{\frak{l}}}
\end{align*}
for some positive integers $u_{\frak{P}}$ and $u_{\frak{l}}$ for $\frak{l} \in \mathcal{L}$.
By extending the relative different to $EF^s$, we see that $\frak{D}(E/F)\mathcal{O}_{EF^s}$ is divisible by the primes of
$EF^s$ lying above the primes in the set $\{\frak{P}\} \cup \{\frak{P}_{\frak{l}} \suchthat \frak{l} \in \mathcal{L}\}$,
and by no other primes of $EF^s$. It follows that
$\frak{D}(E/F)\mathcal{O}_{EF^s}$ is divisible by the primes of $EF^s$ lying
above the primes in the set $\mathcal{L} \cup \{\frak{p}\}$, and by no other primes of $EF^s$. This completes the proof of the claim. \qed
\vspace{0.05in}

Now, since $p$ splits in $F^s$, then $\frak{p}$ splits in $F^s$. Hence
\begin{align}\label{psplit}
\frak{p}\mathcal{O}_{F^s}=\frak{p}_1 \cdots \frak{p}_{g},
\end{align}
where $g=[F^s:F]$ and the $\frak{p}_k$ are distinct prime ideals of $F^s$. For $k=1, \ldots, g$, we have
\begin{align*}
\frak{p}_k\mathcal{O}_{EF^s}=\prod_{t=1}^{a_k}\frak{P}_{k,t}^{b_{k,t}}
\end{align*}
for distinct prime ideals $\frak{P}_{k,t}$ of $EF^s$ and some positive integers $a_k$ and $b_{k,t}$. Thus
\begin{align*}
\frak{p}\mathcal{O}_{EF^s}=\prod_{k=1}^{g}\prod_{t=1}^{a_k}\frak{P}_{k,t}^{b_{k,t}}.
\end{align*}
The prime ideals $\frak{P}_{k,t}$ are the primes of $EF^s$ lying above $\frak{p}$. Hence by the Claim, we see that
$\frak{P}_{k,t}$ divides $\frak{D}(E/F)\mathcal{O}_{EF^s}$. However, by Lemma \ref{differentid},
\begin{align}\label{claimeq}
\frak{D}(E F^s/F^s) = \frak{D}(E/F)\mathcal{O}_{EF^s},
\end{align}
hence $\frak{P}_{k,t}$ divides $\frak{D}(EF^s/F^s)$. It follows that $\frak{p}_k = \frak{P}_{k,t} \cap \mathcal{O}_{F^s}$ divides $\frak{d}(EF^s/F^s)$
for $k=1, \ldots, g$.

Similarly, for a prime ideal $\frak{l} \in \mathcal{L}$, starting with the factorization
\begin{align*}
\frak{l}\mathcal{O}_{F^s}=\frak{P}_{\frak{l},1}^{r(\frak{l},1)}\cdots \frak{P}_{\frak{l},g_{\frak{l}}}^{r(\frak{l},g_{\frak{l}})}
\end{align*}
for distinct prime ideals $\frak{P}_{\frak{l},k}$ of $F^s$ and some positive integers
$r(\frak{l},k)$ for $k=1, \ldots, g_{\frak{l}}$, an analogous argument shows that $\frak{P}_{\frak{l},k}$ divides $\frak{d}(EF^s/F^s)$
for $k=1, \ldots, g_{\frak{l}}$.

By the Claim and the identity (\ref{claimeq}), the primes of $EF^s$ lying above the primes in the set
$$\{\frak{p}_k \suchthat k=1, \ldots, g\} \cup \bigcup_{\frak{l} \in \mathcal{L}}\{\frak{P}_{\frak{l},k}\suchthat k=1, \ldots, g_{\frak{l}}\}$$
are the only primes of $EF^s$ which divide $\frak{D}(EF^s/F^s)$. Hence, the relative discriminant factors as
\begin{align}\label{discfactor}
\frak{d}(EF^s/F^s)=\frak{p}_1^{c_1} \cdots \frak{p}_{g}^{c_{g}}\prod_{\frak{l} \in \mathcal{L}}\prod_{k=1}^{g_{\frak{l}}}\frak{P}_{\frak{l},k}^{d(\frak{l},k)}
\end{align}
for some positive integers $c_1, \ldots, c_{g}$ and $d(\frak{l},k)$ for $\frak{l} \in \mathcal{L}$ and
$k=1, \ldots, g_{\frak{l}}$.

Now, for each embedding $\sigma_i \in \Phi$, let $\widetilde{\sigma_i}$ be an extension of $\sigma_i$ to $EF^s$. Then
since $F^s/\Q$ is Galois, we have $\widetilde{\sigma_i}(F^s) = F^s$, and therefore conjugating by $\widetilde{\sigma_i}$
in equation (\ref{discfactor}) yields
\begin{align}\label{discfactor2}
\frak{d}(E^{\sigma_i}F^s/F^s)=\widetilde{\sigma_i}(\frak{p}_1)^{c_1} \cdots \widetilde{\sigma_i}(\frak{p}_{g})^{c_{g}}
\prod_{\frak{l} \in \mathcal{L}}\prod_{k=1}^{g_{\frak{l}}}\widetilde{\sigma_i}\left(\frak{P}_{\frak{l},k}\right)^{d(\frak{l},k)}.
\end{align}
It follows from (\ref{psplit}) and (\ref{discfactor2}) that
\begin{align}\label{psigmafactor}
\frak{p}^{\sigma_i}\mathcal{O}_{F^s}=\widetilde{\sigma_i}(\frak{p}_1) \cdots \widetilde{\sigma_i}(\frak{p}_{g})
\end{align}
divides $\frak{d}(E^{\sigma_i}F^s/F^s)$. This proves the first part of the lemma.

It remains to prove that $\frak{p}^{\sigma_j}\mathcal{O}_{F^s}$ is relatively prime to  $\frak{d}(E^{\sigma_i}F^s/F^s)$
for $j \neq i$. By Lemma \ref{coprime}, the ideal $\frak{p}^{\sigma_j}\mathcal{O}_{F^s}$ is relatively prime to
$\frak{p}^{\sigma_i}\mathcal{O}_{F^s}$ for $j \neq i$, and hence relatively prime to
$\widetilde{\sigma_i}(\frak{p}_1)^{c_1} \cdots \widetilde{\sigma_i}(\frak{p}_{g})^{c_g}$ by equation (\ref{psigmafactor}). Thus, by
(\ref{discfactor2}) it suffices to prove that $\frak{p}^{\sigma_j}\mathcal{O}_{F^s}$ is relatively prime to
$\widetilde{\sigma_i}\left(\frak{P}_{\frak{l},k}\right)$ for each $\frak{l} \in \mathcal{L}$ and $k=1, \ldots, g_{\frak{l}}$. To see this,
recall that the prime ideal $\mathfrak{l}$ does not divide $pd_{F^s}$,
hence $\mathfrak{l}$ lies above a rational prime $\ell \in \Z$ with $\ell \neq p$.
Since $\widetilde{\sigma_i}\left(\frak{P}_{\frak{l},k}\right)$ lies above $\ell$,
and each of the prime factors of $\frak{p}^{\sigma_j} \mathcal{O}_{F^s}$ lies above $p$, it follows that $\frak{p}^{\sigma_j} \mathcal{O}_{F^s}$
must be relatively prime to $\widetilde{\sigma_i}\left(\frak{P}_{\frak{l},k}\right)$. This proves
the second part of the lemma.
\end{proof}

\begin{lemma}\label{Degree} Let $F$ be a totally real number field of degree $n$.
Let $p \in \Z$ be a prime number that splits in the Galois closure $F^s$ and
let $\mathfrak{p}$ be a prime ideal of $F$ lying above $p$. Let $\mathcal{L}$ be a finite set of prime ideals of $F$ not dividing
$p d_{F^s}$. Let $E/F$ be a
CM extension which is ramified only at the prime ideals of $F$ in the set $\mathcal{L} \cup \{ \mathfrak{p}\}$. Let
$\Phi = \{\sigma_1, \dots, \sigma_n  \} \in \Phi(E)$ be a CM type for $E$ and $E_{\Phi}$ be the reflex field of the CM pair
$(E, \Phi)$. Then $[E_{\Phi} F^s: F^s] = 2^n$ and $[E^s:\Q]=2^n[F^s:\Q]$.
\end{lemma}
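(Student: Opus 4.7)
The plan is to reduce the statement to a question about multiquadratic extensions of $F^s$ and then exploit the precise ramification information provided by Lemma \ref{relative}. By Lemma \ref{compositumid} we have $E_\Phi F^s = E^{\sigma_1}\cdots E^{\sigma_n} = E^s$, so the first assertion is equivalent to $[E^s : F^s] = 2^n$, and the second will then follow from multiplicativity of degrees in the tower $\Q \subset F^s \subset E^s$.

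For each $i = 1, \ldots, n$, set $K_i := E^{\sigma_i} F^s$. Since $[E^{\sigma_i}:F^{\sigma_i}]=2$ and $F^{\sigma_i} \subseteq F^s$, each $K_i$ is a quadratic extension of $F^s$, and
\begin{align*}
E^s = E^{\sigma_1} \cdots E^{\sigma_n} = K_1 \cdots K_n.
\end{align*}
This makes $E^s/F^s$ a multiquadratic extension whose degree over $F^s$ is a power of $2$ bounded above by $2^n$. I will show the degree equals $2^n$ by proving, by induction on $i$, that $[K_1 \cdots K_i : F^s] = 2^i$ for all $1 \leq i \leq n$. The base case $i=1$ is immediate from $[K_1:F^s]=2$. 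For the inductive step, assume $[K_1\cdots K_i : F^s] = 2^i$; it suffices to show $K_{i+1} \not\subseteq K_1 \cdots K_i$, for then the quadratic extension $K_1\cdots K_i (K_{i+1})/K_1\cdots K_i$ is nontrivial and doubles the degree.

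The key point is the ramification incompatibility supplied by Lemma \ref{relative}. Suppose for contradiction that $K_{i+1} \subseteq K_1 \cdots K_i$. Then every prime of $F^s$ that ramifies in $K_{i+1}$ must also ramify in $K_1 \cdots K_i$, and the latter set is the union over $j \leq i$ of the primes of $F^s$ ramifying in $K_j$. Now Lemma \ref{relative} tells us two things: first, that every prime of $F^s$ dividing $\mathfrak{p}^{\sigma_{i+1}} \mathcal{O}_{F^s}$ divides $\mathfrak{d}(K_{i+1}/F^s)$, hence ramifies in $K_{i+1}$; and second, that for each $j \leq i$ the ideal $\mathfrak{p}^{\sigma_{i+1}} \mathcal{O}_{F^s}$ is coprime to $\mathfrak{d}(K_j/F^s)$, so no prime above $\mathfrak{p}^{\sigma_{i+1}}$ ramifies in $K_j$. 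These two facts contradict each other, completing the induction.

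Putting everything together, $[E^s:F^s] = [K_1\cdots K_n : F^s] = 2^n$, and by Lemma \ref{compositumid} this gives $[E_\Phi F^s : F^s] = 2^n$. Multiplying by $[F^s:\Q]$ yields $[E^s:\Q] = 2^n [F^s:\Q]$. The main obstacle is really the inductive step, where one must invoke Lemma \ref{relative} in the correct direction; once the ramification asymmetry between $K_{i+1}$ and $K_1, \ldots, K_i$ at the primes above $\mathfrak{p}^{\sigma_{i+1}}$ is in hand, the rest is formal.
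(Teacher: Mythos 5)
Your proof is correct and follows essentially the same line as the paper: both invoke Lemma \ref{compositumid} to reduce to $[E^s:F^s]$, build a tower of (at most) quadratic extensions $E^{\sigma_1}\cdots E^{\sigma_i}F^s/F^s$, and use Lemma \ref{relative} together with the principle that a prime unramified in each of a finite family of extensions of $F^s$ is unramified in their compositum to show each step of the tower is nontrivial. The only cosmetic difference is that you phrase the inductive step as a contradiction ($K_{i+1}\subseteq K_1\cdots K_i$ would force ramification to transfer), whereas the paper argues directly by exhibiting a prime $\mathfrak p_i$ of $F^s$ above $\mathfrak p^{\sigma_i}$ that is ramified in $E^{\sigma_1}\cdots E^{\sigma_i}F^s$ but unramified in $E^{\sigma_1}\cdots E^{\sigma_{i-1}}F^s$, then comparing ramification indices. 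One small omission: you assert without reference that the set of primes of $F^s$ ramifying in $K_1\cdots K_i$ is the union of those ramifying in each $K_j$; this relies on the standard compositum fact, which the paper cites explicitly (Koch, Proposition 4.9.2), and you should do the same.
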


\begin{proof} By Lemma \ref{compositumid} we have $E_{\Phi} F^s = E^{\sigma_1} \cdots E^{\sigma_n} F^s=E^s$. Hence, to prove that
$[E_{\Phi} F^s: F^s] = 2^n$, we will show that in the tower of extensions
\begin{align*}
F^s \subseteq E^{\sigma_1} F^s \subseteq E^{\sigma_1} E^{\sigma_2} F^s \subseteq \cdots \subseteq E^{\sigma_1} \cdots E^{\sigma_n} F^s,
\end{align*}
each successive extension
\begin{equation*}
\begin{tikzcd}
E^{\sigma_1} \cdots E^{\sigma_{i-1}} E^{\sigma_i} F^s \arrow[-]{d} \\
E^{\sigma_1} \cdots E^{\sigma_{i-1}} F^s
\end{tikzcd}
\end{equation*}
is quadratic.
First, observe that there is an element $\Delta \in \mathcal{O}_F$ with $\Delta \ll 0$ and $E = F(\sqrt{\Delta})$.
Therefore $E^{\sigma_i} F^{s} = F^s (\sqrt{\sigma_i(\Delta}))$, and hence for each $i=1, \dots, n$ we have
\begin{align*}
E^{\sigma_1} \cdots E^{\sigma_{i}} F^s = F^s(\sqrt{\sigma_1(\Delta)}, \dots, \sqrt{\sigma_i(\Delta)}).
\end{align*}
This implies that
\begin{align*}
[E^{\sigma_1} \cdots E^{\sigma_{i-1}} E^{\sigma_i} F^s :E^{\sigma_1} \cdots E^{\sigma_{i-1}} F^s] \leq 2.
\end{align*}

Now, for each $i = 1, \dots , n$, let $\mathfrak{p}_i$ be a prime ideal of $F^s$ dividing the ideal
$\mathfrak{p}^{\sigma_i} \mathcal{O}_{F^s}$. Then for $i \neq j$, Lemma \ref{coprime} implies that
$\mathfrak{p}_i \neq \mathfrak{p}_j$, and moreover, by Lemma \ref{relative}, the relative discriminant
$\mathfrak{d}(E^{\sigma_i} F^s/F^s)$ is divisible by $\mathfrak{p}_i$, but not by $\mathfrak{p}_j$.
This implies that $\mathfrak{p}_i$ is ramified in $E^{\sigma_i}F^s$, but $\mathfrak{p}_j$ is unramified in $E^{\sigma_i}F^s$.

By the preceding paragraph, for each $i =1, \dots, n,$ the prime ideal $\mathfrak{p}_i$ is unramified in the extensions
$E^{\sigma_1}F^s, \dots, E^{\sigma_{i-1}} F^s$. Now, it is known that if a prime ideal of a number field $M$ is unramified
in the extensions $K/M$ and $L/M$,
then it is unramified in their compositum $KL/M$ (see e.g \cite[Proposition 4.9.2]{Koc00}).
Therefore, it follows that
$\mathfrak{p}_i$ is unramified in the compositum $E^{\sigma_1} \cdots E^{\sigma_{i-1}} F^s$. On the other hand,
since $\mathfrak{p}_i$ is ramified in $E^{\sigma_i} F^s /F^s$,
it is ramified in $E^{\sigma_1} \cdots E^{\sigma_{i-1}} E^{\sigma_i} F^s / F^s$.

Let $\mathfrak{P}$ be a ramified prime ideal of $E^{\sigma_1} \cdots E^{\sigma_{i-1}} E^{\sigma_i} F^s$ lying above $\mathfrak{p}_i$. Then
$\mathfrak{Q}:=\mathfrak{P} \cap \mathcal{O}_{E^{\sigma_1} \cdots E^{\sigma_{i-1}} F^s}$
is an unramified prime ideal of $E^{\sigma_1} \cdots E^{\sigma_{i-1}} F^s$ lying
above $\mathfrak{p}_i$.  In terms of ramification indices, we have $e(\mathfrak{P}|\mathfrak{p}_i) \geq 2$ and $e(\mathfrak{Q}|\mathfrak{p}_i) =1$.
Then by multiplicativity of the ramification index, we have
\begin{align*}
e(\mathfrak{P}|\mathfrak{p}_i)=e(\mathfrak{P}|\mathfrak{Q})e(\mathfrak{Q}|\mathfrak{p}_i)=e(\mathfrak{P}|\mathfrak{Q}).
\end{align*}
Hence
\begin{align*}
[E^{\sigma_1} \cdots E^{\sigma_{i-1}} E^{\sigma_i} F^s :E^{\sigma_1} \cdots E^{\sigma_{i-1}} F^s] \geq e(\mathfrak{P}|\mathfrak{Q})=e(\mathfrak{P}|\mathfrak{p}_i)\geq 2.
\end{align*}
We conclude that
\begin{align*}
[E^{\sigma_1} \cdots E^{\sigma_{i-1}} E^{\sigma_i} F^s :E^{\sigma_1} \cdots E^{\sigma_{i-1}} F^s] = 2.
\end{align*}
This completes the proof that $[E_{\Phi}F^s:F^s]=2^n$.

Finally, since $E^s=E_{\Phi}F^s$ and $[E_{\Phi}F^s:F^s]=2^n$, it follows that
\begin{align*}
[E^s:\Q] = [E^s:F^s][F^s:\Q]=2^n[F^s:\Q].
\end{align*}

\end{proof}

\textbf{Proof of Theorem \ref{Shimura}}. We have the following towers of fields.
\begin{equation*}
\begin{tikzcd}
\, & E_{\Phi}F^s \arrow[-]{dl} \arrow[-]{dr} & \\
E_{\Phi} \arrow[-]{dr} &  &  F^s \arrow[-]{dl}\\
 & \Q &
\end{tikzcd}
\end{equation*}
Therefore,
\begin{align*}
[E_{\Phi} F^s : E_{\Phi}] [E_{\Phi} : \Q] = [E_{\Phi} F^s : F^s] [F^s : \Q],
\end{align*}
hence by Lemma \ref{Degree} we have
\begin{align*}
[E_{\Phi} : \Q] = 2^n \frac{[F^s : \Q]}{[E_{\Phi} F^s : E_{\Phi}]}.
\end{align*}
Now, it is known that if $K/M$ is a finite Galois extension and $L/M$ is an arbitrary extension, then $[KL:L]$ divides $[K:M]$
(see e.g. \cite[Corollary VI.1.13]{Lan02}). Since $F^s/\Q$ is Galois, we have that $[E_{\Phi} F^s : E_{\Phi}]$ divides $[F^s : \Q]$.
This implies that $[E_{\Phi} : \Q] \geq 2^n$. On the other hand, by Proposition \ref{orbit},
we also know that $[E_{\Phi} : \Q] \leq 2^n$, thus we conclude that $[E_{\Phi} : \Q] = 2^n$, as desired.

Finally, since $\Q \subseteq E_{\Phi} \subseteq E^s$, it follows that $[E_{\Phi}:\Q] = 2^n$ divides $[E^s : \Q]$. Then if
$n \geq 3$ we have
$[E^s : \Q] \geq 2^n > 2n = [E : \Q],$
which proves that the field extension $E/\Q$ is non-Galois, therefore non-abelian. \qed


\begin{remark}\label{notweyl} Let $E/F$ be a CM extension as in Lemma \ref{Degree}. Then since $[E^s:\Q]=2^n[F^s:\Q]$, the CM field
$E$ is a Weyl CM field if and only if $[F^s:\Q]=n!$.
\end{remark}

\subsection{Algorithm for constructing CM fields with reflex fields of maximal degree}\label{algorithm}

By combining (the proof of) Proposition \ref{InfiniteCMFields} with the choice $m = d_{F^s}$
and Theorem \ref{Shimura} with the choice $\mathcal{L}=\mathcal{R} \cup \{\frak{q}\}$, we obtain
the following algorithm for constructing infinite families of CM extensions which are non-Galois over $\Q$
and with reflex fields of maximal degree.

 \begin{algorithm}[H]
 \caption{CM fields with reflex fields of maximal degree}
 \label{CMFieldsAlgorithm}
\begin{algorithmic}[1]
\vspace{0.05in}
 \item \textbf{Input}: A tuple $(F, p, \mathfrak{p}, \mathcal{R}, \mathcal{U}_1, \mathcal{U}_2)$ consisting of a totally real number field $F$ of degree $n$,
a rational prime $p \in \Z$ that splits in $F^s$, a prime ideal $\mathfrak{p}$ of $F$ lying above $p$, a finite set $\mathcal{R}$ of
prime ideals of $F$ not dividing $pd_{F^s}$, and finite sets $\mathcal{U}_1$ and $\mathcal{U}_2$ of prime ideals of $F$ not dividing $2 p d_{F^s}$
such that $\mathcal{R}$, $\mathcal{U}_1$ and $\mathcal{U}_2$ are pairwise disjoint.
\vspace{0.05in}

 \item \textbf{Output}: A pair $(\mathfrak{q}, \Delta_{\frak{q}})$ where
$\mathfrak{q}$ is a prime ideal of $F$ not dividing $p d_{F^s}$, and $\Delta_{\frak{q}}$ is an element of $\mathcal{O}_F$ with
prime factorization
\begin{align*}
\Delta_{\frak{q}}\mathcal{O}_F=\frak{p} \frak{q}\prod_{\frak{r} \in \mathcal{R}}\frak{r}.
\end{align*}
The field $E_{\mathfrak{q}}:=F(\sqrt{\Delta_{\frak{q}}})$
is a CM extension of $F$ ramified only at the prime ideals of $F$ dividing $\Delta_{\frak{q}}$ with
reflex fields of maximal degree $2^n$. Moreover, each prime ideal in $\mathcal{U}_1$ splits in $E_{\mathfrak{q}}$ and
each prime ideal in $\mathcal{U}_2$ remains inert in $E_{\mathfrak{q}}$. If $n \geq 3$ then $E_{\frak{q}}/\Q$ is non-Galois.
\vspace{0.05in}
 \item Set $\mathcal{T}_1 := ( \mathcal{U}_1 \cup \{ \mathfrak{P} \subset \mathcal{O}_F \suchthat \text{$\mathfrak{P}$
 divides $p d_{F^s}$}  \}) \smallsetminus \{ \mathfrak{p} \}$.
 \item Set $\mathcal{T}_2:=( \mathcal{U}_2 \cup \{ \mathfrak{P} \subset \mathcal{O}_F \suchthat
\text{$\mathfrak{P}$ divides 2}\}) \smallsetminus (\mathcal{T}_1 \cup \mathcal{R} \cup \{ \mathfrak{p} \})$.
 \item Choose an integer $e \in \Z$ satisfying $e \geq  2 \op{max}\{ v_{\mathfrak{P}}(2) \suchthat \mathfrak{P} \in \mathcal{T}_1 \cup \mathcal{T}_2\} + 1$.
 \item Set $\mathfrak{m}_{\infty}$ to be the formal product of all the embeddings in $\op{Hom}(F, \R)$.
 \item Set $$\mathfrak{m}_0:= \prod \limits_{\mathfrak{P} \in \mathcal{T}_1 \cup \mathcal{T}_2} \mathfrak{P}^{e}$$
and $\mathfrak{m}:= \mathfrak{m}_0 \mathfrak{m}_{\infty}$.
 \item For each $\mathfrak{P} \in \mathcal{T}_2$ find an element $\alpha_{\mathfrak{P}} \in \mathcal{O}_F$ such that
 $F_{\mathfrak{P}}(\sqrt{\alpha_{\mathfrak{P}}})$ is an unramified quadratic extension of $F_{\mathfrak{P}}$.
\item Find an element $a \in F^{\times}$ satisfying the following congruences.
\begin{itemize}
\item[(i)] \textrm{$a \overset{\times}{\equiv} -1 \pmod{\mathfrak{m}_{\infty}}$}.
\item[(ii)] \textrm{$a \overset{\times}{\equiv} 1 \pmod{\mathfrak{P}^e}$ for every $\mathfrak{P} \in \mathcal{T}_1$}.
\item[(iii)] \textrm{$a \overset{\times}{\equiv} \alpha_{\mathfrak{P}} \pmod{\mathfrak{P}^e}$ for every $\mathfrak{P} \in \mathcal{T}_2$}.
\end{itemize}
\item Set $$\mathfrak{n}:= a \mathfrak{p}^{-1}\prod_{\frak{r} \in \mathcal{R}}\frak{r}^{-1}.$$
 \item Choose a prime ideal $\mathfrak{q} \subset \mathcal{O}_F$ lying in the ray class of $\mathfrak{n}$
 modulo $\mathfrak{m}$ such that $\mathfrak{q} \not \in \mathcal{T}_1 \cup \mathcal{T}_2 \cup \mathcal{R} \cup \{ \mathfrak{p} \}$.
 \item Find an element $b_{\mathfrak{q}} \in F^{\times}$ such that $b_{\mathfrak{q}} \overset{\times}{\equiv} 1 \pmod{\mathfrak{m}}$
 and $\mathfrak{q} = b_{\mathfrak{q}} \mathfrak{n}$.
\item Set $\Delta_{\frak{q}}:=ab_{\frak{q}}$.
 \item \textbf{Return}: $(\mathfrak{q}, \Delta_{\frak{q}})$.
 \end{algorithmic}
 \end{algorithm}

\begin{remark}
For steps 5 and 8 in the algorithm, see Lemmas \ref{square} and \ref{QUE}, respectively.
\end{remark}

\begin{remark}
The congruences in steps $9$ and $11$ of the algorithm are chosen to force the given prime ideal
$\mathfrak{P} \in \mathcal{T}_1 \cup \mathcal{T}_2$ to be unramified in the extension $E_{\mathfrak{q}}$. In fact, as was shown in the proof of
Lemma \ref{ablemma}, the congruence 9 (ii) forces $\mathfrak{P}$ to split in $E_{\mathfrak{q}}$,
while the congruence 9 (iii) forces $\mathfrak{P}$ to remain inert in $E_{\mathfrak{q}}$.
\end{remark}

\begin{remark}
Recall from the proof of Proposition \ref{InfiniteCMFields} that $\mathcal{S}_{\mathcal{R}, \frak{p}}:=\mathcal{S}(\mathfrak{n})
\smallsetminus (\mathcal{T}_1 \cup \mathcal{T}_2 \cup \mathcal{R} \cup \{\frak{p}\})$,
where
\begin{align*}
\mathcal{S}(\mathfrak{n}):=\{\frak{q} \subset \mathcal{O}_F \suchthat \textrm{$\frak{q}$ is a prime ideal and
$[\frak{q}]=[\mathfrak{n}]$ in $\mathcal{R}_{F}(\mathfrak{m})$}\}.
\end{align*}
Also, as was shown in the proof of Proposition \ref{InfiniteCMFields}, the set $\mathcal{S}_{\mathcal{R}, \frak{p}}$
has natural density $d(\mathcal{S}_{\mathcal{R}, \frak{p}})=1/\# \mathcal{R}_{F}(\mathfrak{m})$.
\end{remark}

\section{Proof of Theorem \ref{maintheorem}} Let $F$ be a totally real
number field of degree $n \geq 3$. Let $p \in \Z$ be a prime number
which splits in the Galois closure $F^s$ and let $\frak{p}$ be a prime ideal of $F$ lying above $p$.
Let $\mathcal{R}$ be a finite set of prime ideals of $F$ not dividing $pd_{F^s}$. Let $\mathcal{U}_1$ and $\mathcal{U}_2$ be finite sets of prime ideals of $F$ not
dividing $2pd_{F^s}$ such that $\mathcal{R}, \mathcal{U}_1$ and $\mathcal{U}_2$ are pairwise disjoint.
Then by Proposition \ref{InfiniteCMFields} with the choice $m=d_{F^s}$, there is a set $\mathcal{S}_{\mathcal{R}, \frak{p}}$ of prime ideals of $F$ which is disjoint from
$\mathcal{R} \cup \mathcal{U}_1 \cup \mathcal{U}_2 \cup \{\frak{p}\}$ such that
the following statements are true.
\begin{itemize}
\item[(i)] $\mathcal{S}_{\mathcal{R}, \frak{p}}$ has positive natural density.
\item[(ii)] Each prime ideal $\frak{q} \in \mathcal{S}_{\mathcal{R}, \frak{p}}$ is relatively prime to $pd_{F^s}$.
\item[(iii)] For each prime ideal $\frak{q} \in \mathcal{S}_{\mathcal{R}, \frak{p}}$, there is an element
$\Delta_{\frak{q}} \in \mathcal{O}_F$ with prime factorization
\begin{align*}
\Delta_{\frak{q}}\mathcal{O}_F=\frak{p} \frak{q}\prod_{\frak{r} \in \mathcal{R}}\frak{r}.
\end{align*}
\item[(iv)] The field $E_{\frak{q}}:=F(\sqrt{\Delta_{\frak{q}}})$ is a CM extension of $F$
which is ramified only at the prime ideals of $F$ dividing $\Delta_{\frak{q}}$.
Moreover, each prime ideal in $\mathcal{U}_1$ splits in $E_{\frak{q}}$ and each
prime ideal in $\mathcal{U}_2$ is inert in $E_{\frak{q}}$.
\end{itemize}
It follows from Theorem \ref{Shimura} with the choice $\mathcal{L} = \mathcal{R} \cup \{ \mathfrak{q} \}$ that for each prime
ideal $\frak{q} \in \mathcal{S}_{\mathcal{R}, \frak{p}}$, the degree of the reflex
field $E_{\frak{q}, \Phi}$ is $[E_{\frak{q}, \Phi}:\Q]=2^n$
for every CM type $\Phi \in \Phi(E_{\mathfrak{q}})$, and moreover, since $n \geq 3$ then $E_{\frak{q}}/\Q$ is non-Galois.

Now, by Proposition \ref{cc} and Corollary \ref{orbit2}, if $E$ is a CM field and there exists a CM type $\Phi \in \Phi(E)$ such that
the degree of the reflex field $E_{\Phi}$ is $[E_{\Phi}:\Q]=2^n$,
then Conjecture \ref{cc2} is true for $E$.  It then follows from the previous paragraph
that for each prime ideal $\frak{q} \in \mathcal{S}_{\mathcal{R}, \frak{p}}$, Conjecture \ref{cc2} is true for $E_{\frak{q}}$. \qed

\section{Weyl CM fields and the proof of Theorem \ref{weyl}}

Let $E=\Q(\alpha)$ be a CM field of degree $2g$. Let $m_{\alpha}(X)$
be the minimal polynomial of $\alpha$ and denote its roots by $\alpha_1=\alpha, \overline{\alpha_1}, \ldots, \alpha_g, \overline{\alpha_g}$.
Let
\begin{align}\label{permute0}
a_{2\ell -1}:=\alpha_{\ell} \qquad \textrm{and} \qquad  a_{2 \ell}:= \overline{\alpha_{\ell}}
\end{align}
for $\ell = 1, \ldots, g$. Then $E^{s}=\Q(a_1, \ldots, a_{2g})$
is the Galois closure of $E$. Let $S_{2g}$ be the symmetric group on the elements $\{a_1, \ldots, a_{2g}\}$ and
$W_{2g}$ be the subgroup of
$S_{2g}$ consisting of permutations which map any pair of the form $\{a_{2j-1}, a_{2j}\}$ to a pair $\{a_{2k-1}, a_{2k}\}$.
The group $W_{2g}$ is called the \textit{Weyl group}. It can be shown that $\# W_{2g} = 2^g g!$ and that $W_{2g}$
fits in the exact sequence
\begin{align*}
1 \longrightarrow (\Z/2\Z)^{g} \longrightarrow W_{2g} \longrightarrow S_{g} \longrightarrow 1.
\end{align*}

\begin{proposition}\label{weylprop}
The Galois group $\mathrm{Gal}(E^s/\Q)$ is isomorphic to a subgroup of $W_{2g}$.
\end{proposition}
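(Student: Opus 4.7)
My plan is to embed $\mathrm{Gal}(E^s/\Q)$ into $S_{2g}$ via its faithful action on the roots $\{a_1, \ldots, a_{2g}\}$ of the minimal polynomial $m_\alpha(X)$, and then to show that the image consists entirely of permutations that preserve the pair structure defining $W_{2g}$. The faithful embedding is immediate from the fact that $E^s = \Q(a_1, \ldots, a_{2g})$, so any automorphism of $E^s$ over $\Q$ is determined by its action on these $2g$ roots, which it must permute.

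The heart of the proof is to exploit the central role played by complex conjugation in the Galois group of a CM field. Since $E^s$ is itself a CM field (as noted earlier in the paper, e.g.\ via \cite[Proposition 5.12]{Shi94}), complex conjugation $c$ restricts to an element of $\mathrm{Gal}(E^s/\Q)$, and by the standard fact that complex conjugation commutes with every embedding of a CM field (\cite[Proposition 5.11]{Shi94}), $c$ lies in the center of $\mathrm{Gal}(E^s/\Q)$. Under the pairing (\ref{permute0}), the element $c$ acts on the roots as the product of transpositions $(a_1\,a_2)(a_3\,a_4)\cdots(a_{2g-1}\,a_{2g})$, since by construction $c(a_{2\ell-1}) = a_{2\ell}$ and $c(a_{2\ell}) = a_{2\ell-1}$.

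Given any $\sigma \in \mathrm{Gal}(E^s/\Q)$, suppose $\sigma(a_{2\ell-1}) = a_k$. Applying the commutation relation $\sigma c = c \sigma$ to $a_{2\ell-1}$ yields
\begin{align*}
\sigma(a_{2\ell}) = \sigma(c(a_{2\ell-1})) = c(\sigma(a_{2\ell-1})) = c(a_k),
\end{align*}
so $\{\sigma(a_{2\ell-1}), \sigma(a_{2\ell})\} = \{a_k, c(a_k)\}$. Since $c(a_k)$ is the partner of $a_k$ under the pairing, this set is one of the pairs $\{a_{2m-1}, a_{2m}\}$. Therefore $\sigma$ sends each pair $\{a_{2\ell-1}, a_{2\ell}\}$ to another pair of the same form, which is precisely the condition defining $W_{2g}$. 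This shows that the embedding $\mathrm{Gal}(E^s/\Q) \hookrightarrow S_{2g}$ factors through $W_{2g}$, completing the proof.

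The argument is essentially formal once one knows that complex conjugation is central in the Galois group of a CM field, so I do not anticipate any real obstacle; the only subtlety is making sure the pairing (\ref{permute0}) is invoked correctly so that the action of $c$ is read off unambiguously.
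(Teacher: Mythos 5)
Your proof is correct and follows essentially the same route as the paper's: both embed $\mathrm{Gal}(E^s/\Q)$ into $S_{2g}$ via the action on the roots and then use the fact that complex conjugation commutes with every element of the Galois group of a CM field to show that each $\sigma$ sends the pair $\{a_{2\ell-1}, a_{2\ell}\}$ to $\{\sigma(a_{2\ell-1}), c(\sigma(a_{2\ell-1}))\}$, which is one of the prescribed pairs. Your phrasing in terms of $c$ being central in $\mathrm{Gal}(E^s/\Q)$ is simply a restatement of the identity $\overline{\sigma(b)} = \sigma(\overline{b})$ used in the paper, so there is no substantive difference.
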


\begin{proof} There is an injective group homomorphism $\phi: \Gal(E^s/\Q) \longrightarrow S_{2g}$
given by restriction $\sigma \longmapsto \sigma|_{\{a_1, \ldots, a_{2g}\}}$. Hence
$\Gal(E^s/\Q) \cong \phi(\Gal(E^s/\Q)) < S_{2g}$, so it suffices to prove that
$\phi(\Gal(E^s/\Q)) \subseteq W_{2g}$, or equivalently, that given $\sigma \in \Gal(E^s/\Q)$ and a pair $\{a_{2j-1}, a_{2j}\}$,
we have $\sigma(\{a_{2j-1}, a_{2j}\})=\{a_{2k-1}, a_{2k}\}.$
Since $\sigma$ permutes the elements $\{a_1, \ldots, a_{2g}\}$, we have $\sigma(a_{2j-1})=a_{2k-1}$ or $\sigma(a_{2j-1})=a_{2k}$ for some $k$.
Now, since $E$ is a CM field, given any $b \in E$, we have $\overline{\sigma(b)}=\sigma(\overline{b})$ for all $\sigma \in \Gal(E^s/\Q)$. Moreover,
from (\ref{permute0}), we have $\overline{a_{2\ell-1}}=a_{2\ell}$ and $\overline{a_{2\ell}}=a_{2\ell-1}$. Combining these facts yields
\begin{align*}
\sigma(a_{2j})=
\sigma(\overline{a_{2j-1}})=\overline{a_{2k-1}}=a_{2k} \quad \textrm{or} \quad \sigma(a_{2j})=\sigma(\overline{a_{2j-1}})=\overline{a_{2k}}=a_{2k-1}.
\end{align*}
This completes the proof.
\end{proof}

\begin{definition}
If $E$ is a CM field such that $\mathrm{Gal}(E^s/\Q)\cong W_{2g}$, then $E$ is called a \textit{Weyl CM field}.
\end{definition}

Observe that if $g \geq 2$ and $E$ is a Weyl CM field of degree $2g$, then $E/\Q$ is non-Galois since
$\# \mathrm{Gal}(E^s/\Q)=2^g g! > 2g=[E:\Q]$. In particular, any Weyl CM field of degree $2g \geq 4$ is non-abelian.
\vspace{0.05in}

\textbf{Proof of Theorem \ref{weyl}}.
Let $E$ be a Weyl CM field. Then by Proposition \ref{cc} and Corollary \ref{orbit2}, it suffices to prove that
there exists a CM type $\Phi$ for $E$ such that the reflex field $E_{\Phi}$
has degree $[E_{\Phi}:\Q]=2^g$. For $i=1, \ldots, g$ let $\tau_i: E \hookrightarrow \C$ be the embedding defined by
$\tau_i(\alpha_1) = \alpha_i$, where $\alpha_1=\alpha$. Then $\textrm{Hom}(E,\C)=\{\tau_1, \overline{\tau_1}, \ldots, \tau_g, \overline{\tau_g}\}$.
Note that $\tau_i(a_1)=a_{2i-1}$ for $i=1, \ldots, g$. Fix the choice of CM type $\Phi=\{\tau_1, \ldots, \tau_g\}$. We will prove
that $[E_{\Phi}:\Q]=2^g$.

Since $E$ is a Weyl CM field, we have $\mathrm{Gal}(E^s/\Q)\cong W_{2g}$, and thus
$\# \mathrm{Gal}(E^s/\Q) = 2^g g!$. Moreover, the calculations in the proof of Lemma \ref{orbit} yield
\begin{align*}
[E_{\Phi}:\Q]=\frac{ \# \Gal(E^{s}/\Q)}{\# \op{Stab}_{\op{Gal}(E^{s}/\Q)}(\Phi)}=\frac{2^gg!}{\# \op{Stab}_{\op{Gal}(E^{s}/\Q)}(\Phi)}.
\end{align*}
Hence it suffices to prove that $\# \op{Stab}_{\op{Gal}(E^{s}/\Q)}(\Phi)=g!$.

Let $S_{2g}^{\textrm{odd}}$ be the symmetric group on the odd-indexed elements $\{a_1, a_3, \ldots, a_{2g-1}\}$.
Then
\begin{align*}
\sigma \in \op{Stab}_{\op{Gal}(E^{s}/\Q)}(\Phi) & \iff  \sigma \Phi = \Phi \\
& \iff \textrm{for all $i$, there exists a $j$ such that $\sigma \tau_i (a_1) = \tau_j(a_1)$}\\
&  \iff \textrm{for all $i$, there exists a $j$ such that $\sigma(a_{2i-1})=a_{2j-1} $} \\
&  \iff \sigma|_{\{a_1, a_3, \ldots, a_{2g-1}\}} \in S_{2g}^{\textrm{odd}}.
\end{align*}
Thus we have a map $\widetilde{\phi}: \op{Stab}_{\op{Gal}(E^{s}/\Q)}(\Phi) \longrightarrow S_{2g}^{\textrm{odd}}$ given by restriction $\sigma \longmapsto
\sigma|_{\{a_1, a_3, \ldots, a_{2g-1}\}}$. We will prove that $\widetilde{\phi}$ is bijective.
\vspace{0.05in}

\textbf{Surjectivity}: Let $\pi \in S_{2g}^{\textrm{odd}}$. Then for all $i$, there exists a $j$ such that $\pi(a_{2i-1})=a_{2j-1}$.
There is a unique lift $\widetilde{\pi}$ of $\pi$ to $W_{2g}$ given by $\widetilde{\pi}(a_{2i-1})=a_{2j-1}$ and $\widetilde{\pi}(a_{2i})=a_{2j}$. Since
$E$ is a Weyl CM field, we have an isomorphism $\op{Gal}(E^{s}/\Q) \cong \phi(\op{Gal}(E^{s}/\Q))=W_{2g}$, where $\phi$ is the restriction map
$\sigma \longmapsto \sigma|_{\{a_1, \ldots, a_{2g}\}}$ in the proof of Proposition \ref{weylprop}. Hence there exists a
unique element $\sigma \in \op{Gal}(E^{s}/\Q)$ such that $\phi(\sigma)=\widetilde{\pi}$. Observe that
\begin{align*}
\sigma|_{\{a_1, a_3, \ldots, a_{2g-1}\}}
= \widetilde{\pi}|_{\{a_1, a_3, \ldots, a_{2g-1}\}} = \pi \in S_{2g}^{\textrm{odd}}.
\end{align*}
It follows that
$\sigma \in \op{Stab}_{\op{Gal}(E^{s}/\Q)}(\Phi)$ with $\widetilde{\phi}(\sigma)=\pi$. This proves that $\widetilde{\phi}$ is surjective.
\vspace{0.05in}

\textbf{Injectivity}: Let $\sigma_1, \sigma_2 \in \op{Stab}_{\op{Gal}(E^{s}/\Q)}(\Phi)$ with $\widetilde{\phi}(\sigma_1)=\widetilde{\phi}(\sigma_2)$.
Then
\begin{align*}
{\sigma_1}|_{\{a_1, a_3, \ldots, a_{2g-1}\}}={\sigma_2}|_{\{a_1, a_3, \ldots, a_{2g-1}\}},
\end{align*}
i.e., $\sigma_1(a_{2i-1})=\sigma_2(a_{2i-1})$ for $i=1, \ldots, g$.
On the other hand, arguing as in the proof of Proposition \ref{weylprop}, we have
\begin{align*}
\sigma_1(a_{2i})=\sigma_1(\overline{a_{2i-1}})=\overline{\sigma_1(a_{2i-1})} = \overline{\sigma_2(a_{2i-1})} = \sigma_2(\overline{a_{2i-1}}) =\sigma_2(a_{2i})
\end{align*}
for $i=1, \ldots, g$. Thus, $\sigma_1=\sigma_2$. This proves that $\widetilde{\phi}$ is injective.
\vspace{0.05in}

Since $\widetilde{\phi}$ is bijective, we have $\# \op{Stab}_{\op{Gal}(E^{s}/\Q)}(\Phi) = \# S_{2g}^{\textrm{odd}} = g!$. This completes the proof.
\qed

\section{Proof of Theorem \ref{rhodensity}}

As mentioned in Remark \ref{weylremark}, if $E$ is a quartic CM field then the only possible Galois groups
of its Galois closure are $C_4: = \Z/4\Z$, $V_4:= \Z/2\Z \times \Z/2\Z$ and $D_4$.
Moreover, the Weyl group $W_4 \cong D_4$. It is known that if a quartic number field $K$ has Galois group $\Gal(K^s/\Q) \cong C_4$
or $V_4$, then $K$ contains a unique real quadratic subfield. Hence for the signature $(0, 2)$ (which corresponds to the quartic totally complex case) we have
\begin{align}\label{CMC4V4}
\op{CM}_4(C_4, X) = N_{0, 2}(C_4, X) \quad \text{and} \quad \op{CM}_4(V_4, X) = N_{0, 2}(V_4, X).
\end{align}
When $\Gal(K^s/\Q) \cong D_4$, then $K$ can either contain a real quadratic subfield or an imaginary quadratic subfield.
Thus in that case one can define a refined counting function $N_{0, 2}^{+}(D_4, X)$ which counts only isomorphism classes of quartic number fields
$K/\Q$ with signature $(0, 2)$ containing a real quadratic subfield, and such that $\Gal(K^s/\Q) \cong D_4$ and $|d_K| \leq X$ (see \cite{CDO02}).
With this notation we then have
\begin{align}\label{CMD4}
\op{CM}_4(W_4, X) = \op{CM}_4(D_4, X) = N_{0, 2}^{+}(D_4, X).
\end{align}


 
By Cohen, Diaz y Diaz and Olivier \cite[Corollary 4.5 (2), p. 501]{CDO05} (which refines earlier work of Baily \cite{Bai80} and M\"aki \cite{Mak85}) 
we have
\begin{align}\label{AsympoticC4}
N_{0, 2}(C_4, X) = c(C_4)X^{\frac{1}{2}} + O(X^{\frac{1}{3} + \varepsilon}),
\end{align}
for some explicit positive constant $c(C_4)$ and any $\varepsilon > 0$. Similarly, in \cite[p. 582]{CDO06} we find the asymptotic formula
\begin{align}\label{AsymptoticV4}
N_{0, 2}(V_4, X) = c(V_4)X^{\frac{1}{2}} \log^{2}{X} + O(X^{\frac{1}{2}} \log{X}),
\end{align}
for some explicit positive constant $c(V_4)$. Finally, in \cite[Proposition 6.2, p. 88]{CDO02} we find the asymptotic formula
\begin{align}\label{AsymptoticD4}
N_{0, 2}^{+}(D_4, X) = c(D_4)^{+} X + O(X^{\frac{3}{4} + \varepsilon}),
\end{align}
where again $c(D_4)^{+}$ is an explicit positive constant. 

Since
\begin{align*}
\op{CM}_4(X) = \op{CM}_4(D_4, X) + \op{CM}_4(C_4, X) + \op{CM}_4(V_4, X),
\end{align*}
it follows from equations (\ref{CMC4V4})--(\ref{AsymptoticD4}) that
\begin{align*}
\op{CM}_4(X) =  c(D_4)^{+}X + O(X^{\frac{3}{4} + \varepsilon}).
\end{align*}
Because this is the same asymptotic formula satisfied by the counting function $\op{CM}_4(W_4, X)$,
we conclude that the density of quartic Weyl CM fields is
\begin{align*}
\rho_{\text{Weyl}}(4)=\lim_{X \to \infty} \frac{\op{CM}_{4}(W_{4}, X)}{\op{CM}_{4}(X)} = 1.
\end{align*}
\qed

\section{Abelian varieties over finite fields, Weil $q$-numbers, and density results}

We first review some facts concerning Weil $q$-numbers and abelian varieties over finite fields.

\subsection{Weil $q$-numbers and abelian varieties over $\mathbb{F}_q$}
Let $q=p^n$ where $p$ is a prime number and $n$ is a positive integer. A \textit{Weil $q$-number} is an algebraic integer $\pi$ such that for
every embedding $\sigma : \Q(\pi) \hookrightarrow \C$ we have $|\sigma(\pi)|=q^{1/2}$.  Let
$W(q)$ denote the set of Weil $q$-numbers. Two Weil $q$-numbers $\pi_1$ and $\pi_2$
are \textit{conjugate} if there exists an isomorphism $\Q(\pi_1) \rightarrow \Q(\pi_2)$ which maps $\pi_1$ to $\pi_2$. In this case, we write
$\pi_1 \sim \pi_2$.

We have the following facts about Weil $q$-numbers (see e.g. \cite[p. 1 and Corollary 2.1]{GO88}).

\begin{lemma}\label{q-number}
Let $q=p^n$ and $\pi \in W(q)$.
\begin{itemize}

\item[(i)] If $\sigma(\pi) \in \R$ for some embedding
$\sigma: \Q(\pi) \hookrightarrow \C$, then $\Q(\pi)=\Q$ if $n$ is even and $\Q(\pi)=\Q(\sqrt{p})$ if $n$ is odd.
\vspace{0.05in}
\item[(ii)] If $\sigma(\pi) \in \C \smallsetminus \R$ for all embeddings
$\sigma: \Q(\pi) \hookrightarrow \C$, then
$\Q(\pi)$ is a CM field with maximal totally real subfield $\Q(\pi + q/\pi)$.
\end{itemize}
\end{lemma}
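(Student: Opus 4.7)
The plan is to treat the two parts of the lemma separately, both resting on the Weil constraint $\sigma(\pi)\overline{\sigma(\pi)} = q$ imposed by $|\sigma(\pi)| = q^{1/2}$ for every embedding.

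For part (i), I would first observe that if $\sigma(\pi) \in \R$ for some embedding $\sigma : \Q(\pi) \hookrightarrow \C$, then $\sigma(\pi) = \pm q^{1/2} = \pm p^{n/2}$, so $\sigma(\pi^2) = p^n \in \Q$. Because $\sigma$ fixes $\Q$ and is injective, this forces the algebraic identity $\pi^2 = p^n$ inside $\Q(\pi)$. When $n$ is even, $p^{n/2}$ is rational and hence $\pi = \pm p^{n/2} \in \Q$, giving $\Q(\pi) = \Q$. When $n$ is odd, $p^n$ is not a square in $\Q$, so $x^2 - p^n$ is irreducible and must be the minimal polynomial of $\pi$; therefore $\Q(\pi) = \Q(\sqrt{p^n}) = \Q(\sqrt{p})$.

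For part (ii), the plan is to produce a totally real quadratic subfield of $\Q(\pi)$ explicitly. The key identity $|\sigma(\pi)|^2 = q$ rewrites as $\overline{\sigma(\pi)} = q/\sigma(\pi) = \sigma(q/\pi)$ for every embedding $\sigma$. Consequently the element $\alpha := \pi + q/\pi \in \Q(\pi)$ is sent by each embedding to $2\,\op{Re}(\sigma(\pi)) \in \R$, so $F := \Q(\alpha)$ is a totally real subfield of $\Q(\pi)$. Since $\pi$ is a root of $x^2 - \alpha x + q \in F[x]$, we have $[\Q(\pi):F] \leq 2$, and since $\pi$ is not totally real by hypothesis, $\pi \notin F$, forcing $[\Q(\pi):F] = 2$. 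Thus $\Q(\pi)$ is a totally imaginary quadratic extension of the totally real field $F$, and hence a CM field. That $F$ is the \emph{maximal} totally real subfield follows for free: any strictly larger totally real subfield of $\Q(\pi)$ would have to equal $\Q(\pi)$ itself, contradicting the hypothesis that $\pi$ is totally complex.

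I do not anticipate any serious obstacle; once the identity $\sigma(q/\pi) = \overline{\sigma(\pi)}$ is in hand, it simultaneously supplies a totally real algebraic element and the quadratic polynomial over the resulting totally real subfield. The only point requiring a little care is bookkeeping the parity of $n$ in part (i) to decide whether $p^{n/2}$ is rational, and noting that $q/\pi$ genuinely lies in $\Q(\pi)$, which is immediate since $q \in \Q \subset \Q(\pi)$ and $\pi \neq 0$.
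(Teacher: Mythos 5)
The paper does not actually prove this lemma; it cites it directly to Greaves and Odoni \cite{GO88}, so there is no proof in the paper to compare against. Your self-contained argument is the standard one and is essentially correct: part (i) is exactly right, and in part (ii) the observation $\overline{\sigma(\pi)} = \sigma(q/\pi)$ is the crux, simultaneously making $\alpha = \pi + q/\pi$ totally real and providing the quadratic relation $\pi^2 - \alpha\pi + q = 0$ over $F = \Q(\alpha)$.

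One small point deserves a word of care at the end. To conclude that $F$ is the \emph{maximal} totally real subfield, your argument (``any strictly larger totally real subfield would equal $\Q(\pi)$'') only rules out totally real subfields that \emph{contain} $F$; a priori it does not exclude a totally real subfield $K$ incomparable to $F$ with $[K:\Q] > [F:\Q]$. The clean way to close this is to note that complex conjugation $c$ acts on $\Q(\pi)$ as the nontrivial element of $\Gal(\Q(\pi)/F)$ and commutes with all embeddings; hence any totally real element is fixed by $c$ and therefore lies in $F$, so every totally real subfield is contained in $F$. With that one sentence added, the proof is complete.
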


Let $\mathbb{F}_q$ be a finite field of characteristic $p$ with $q=p^n$ elements. Let $A/\mathbb{F}_q$ be an abelian
variety of dimension $g$ defined over $\mathbb{F}_q$ and let $\pi_A \in \textrm{End}(A)$ be the Frobenius endomorphism of $A$. Let
$f_A$ be the characteristic polynomial of $A$, which is a monic polynomial of degree $2g$ with coefficients in $\Z$.
Let $\Q[\pi_A]$ be the $\Q$-subalgebra of $\textrm{End}(A) \otimes_{\Z} \Q$ generated by $\pi_A$. It is known that
$\Q[\pi_A]$ is a field if and only if $A/\mathbb{F}_q$ is simple.

Weil proved that the roots of $f_A$ are Weil $q$-numbers. Moreover, if $A/\mathbb{F}_q$ is simple,
then the image of $\pi_A$ under any homomorphism $\phi: \Q[\pi_A] \rightarrow \C$ is a Weil $q$-number. Any such homomorphism
$\phi$ maps $\pi_A$ to a root $\alpha_A$ of $f_A$. From here forward, we identify $\pi_A$ with $\phi(\pi_A)$ for some choice of $\phi$. This choice
does not matter, since we will only consider Weil $q$-numbers up to conjugacy.

If $A/\mathbb{F}_q$ and $B/\mathbb{F}_q$ are isogenous simple abelian varieties, then $f_A=f_B$. In particular,
$\pi_A \sim \pi_{B}$. This gives a well-defined map  $A \mapsto \pi_A$
between the set of isogeny classes of simple abelian varieties $A/\mathbb{F}_q$ and Weil $q$-numbers up to conjugacy. A celebrated theorem of
Honda and Tate (see e.g. \cite{Tat71}) asserts that this map is a bijection.

\subsection{Density results and the proof of Theorem \ref{density}}
Let $A/\mathbb{F}_q$ be an abelian variety and $\alpha_A$ be a root of $f_A$.
Let $K_{f_A}=\Q(\alpha_A)^s$ be the splitting field of $f_A$ and $G_{f_A}=\textrm{Gal}(K_{f_A}/\Q)$. Define the sets
\begin{align*}
\mathcal{A}_g(q)&:=\{\textrm{isogeny classes of abelian varieties $A/\mathbb{F}_{q}$ with $\textrm{dim}(A)=g$}\},\\
\mathcal{B}_g(q)&:=\{\textrm{isogeny classes of abelian varieties $A/\mathbb{F}_{q}$ with $\textrm{dim}(A)=g$ and  $G_{f_A} \cong W_{2g}$}\}.\\
\end{align*}

Kowalski \cite[Proposition 8]{Kow06} proved the following density result.

\begin{theorem}\label{Kdensity} With notation as above, one has
\begin{align*}
\lim_{n \rightarrow \infty} \frac{\# \mathcal{B}_g(p^n)}{\# \mathcal{A}_g(p^n)} = 1.
\end{align*}
\end{theorem}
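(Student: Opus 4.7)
The plan is to reduce the counting problem to a quantitative Hilbert irreducibility statement via the explicit parameterization of Weil polynomials. I would first use Honda–Tate together with the decomposition of abelian varieties up to isogeny to identify $\mathcal{A}_g(q)$ with the set of monic ``Weil polynomials'' $f(T) \in \Z[T]$ of degree $2g$ whose roots are Weil $q$-numbers. Such polynomials satisfy the functional equation $T^{2g} f(q/T) = q^g f(T)$, so they can be written uniquely in the form $f(T) = T^g h(T + q/T)$, where $h(X) \in \Z[X]$ is monic of degree $g$ with all roots $\beta_i$ lying in $[-2q^{1/2}, 2q^{1/2}]$. This substitution is the combinatorial source of the Weyl structure: the $S_g$ quotient in the exact sequence defining $W_{2g}$ corresponds to permutations of the $\beta_i$, while the kernel $(\Z/2\Z)^g$ corresponds to the independent quadratic extensions obtained by adjoining the $g$ square roots $\sqrt{\beta_i^2 - 4q}$ needed to recover the Weil numbers $\alpha_i$ from their traces $\beta_i$.

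Next, I would estimate $\#\mathcal{A}_g(p^n)$ by counting integer points in the coefficient box for $h$ of side lengths $O(q^{i/2})$, which produces an asymptotic of the form $\#\mathcal{A}_g(p^n) \sim c_g \, p^{ng(g+1)/4}$ for some explicit positive constant $c_g$ as $n \to \infty$.

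The key step is to show that the complement $\mathcal{A}_g(p^n) \smallsetminus \mathcal{B}_g(p^n)$ has strictly smaller order of growth. I would invoke a quantitative Hilbert irreducibility theorem of van der Waerden / S.D. Cohen type for integer polynomials in a box, applied to $h$, to conclude that the number of $h$ with Galois group a proper subgroup of $S_g$ is $O(p^{ng(g+1)/4 - \delta})$ for some $\delta > 0$. A parallel argument controls the $h$ for which the $g$ quadratic extensions cut out by the discriminants $\beta_i^2 - 4q$ fail to be linearly disjoint over $\Q(\beta_1,\dots,\beta_g)$: this amounts to requiring that certain monomials in these discriminants are non-squares, a condition carved out by lower-dimensional algebraic subvarieties in the coefficient space of $h$. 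Combining the two bounds forces $\mathrm{Gal}(K_{f_A}/\Q) \cong W_{2g}$ on a subset of density tending to $1$.

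The main obstacle is making the quantitative Hilbert irreducibility sufficiently uniform in the size $q = p^n$ of the box to dominate the leading count, and simultaneously coupling the $S_g$-genericity of the ``trace polynomial'' $h$ with the independence of the $g$ quadratic extensions coming from the square roots. A secondary point is that Weil polynomials corresponding to non-simple abelian varieties (where $f$ is reducible or has repeated irreducible factors) automatically land outside $\mathcal{B}_g(p^n)$; these must be shown to contribute a negligible proportion, which follows because their count factors through products of lower-degree Weil polynomial counts and is therefore of strictly smaller order than $p^{ng(g+1)/4}$.
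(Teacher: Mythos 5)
The paper does not prove Theorem~\ref{Kdensity}; it quotes it from Kowalski \cite[Proposition 8]{Kow06}, so there is no in-paper argument to compare against. Your outline is essentially Kowalski's: pass to the trace polynomial $h$ via $f(T) = T^g h(T + q/T)$, get $\#\mathcal{A}_g(p^n) \asymp q^{g(g+1)/4}$ from the DiPippo--Howe count, and sieve out the thin set of parameters where $\Gal(K_{f_A}/\Q)$ is a proper subgroup of $W_{2g}$; your decomposition of the Weyl condition into $S_g$-genericity of $h$ plus $2$-independence of the discriminants $\beta_i^2 - 4q$ over the splitting field of $h$ is correct, since both are thin-set conditions and the a priori inclusion $\Gal(K_{f_A}/\Q) \leq W_{2g}$ then forces equality by an order count. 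Two things the sketch glosses over: (a) Honda--Tate does not identify $\mathcal{A}_g(q)$ with all monic degree-$2g$ Weil polynomials --- for simple $A$ one has $f_A = m_{\pi_A}^e$ with $e$ prescribed by local Brauer invariants, and the content of DiPippo--Howe's theorem is exactly that the subcollection which actually occurs still has the full-order main term $\asymp q^{g(g+1)/4}$; (b) the quantitative Hilbert irreducibility/large-sieve bound must be proved over the lopsided coefficient box with $i$-th side $\asymp q^{i/2}$, not a cube as in Gallagher or S.~D.~Cohen, with a savings exponent uniform in $q = p^n$. Neither point undermines the plan; they are simply where a complete write-up would do the real work.
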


On the other hand, we have the following result.

\begin{proposition}\label{cmcharacterization}
Let $A/\mathbb{F}_q$ be an abelian variety of dimension $g \geq 2$. If $G_{f_A} \cong W_{2g}$,
then $\Q(\alpha_A)$ is a non-Galois Weyl CM field of degree $2g$.
\end{proposition}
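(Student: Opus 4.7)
The plan is to verify directly the four defining properties of $E := \Q(\alpha_A)$ as a non-Galois Weyl CM field of degree $2g$: namely that $[E : \Q] = 2g$, that $E$ is a CM field, that $\op{Gal}(E^s/\Q) \cong W_{2g}$, and that $E/\Q$ is non-Galois. The only input beyond the hypothesis $G_{f_A} \cong W_{2g}$ (of order $2^g g!$) will be Lemma \ref{q-number}.

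First I would compute $[E:\Q]$. Setting $d := [E:\Q]$, one has $d \mid 2g$; and since by definition $K_{f_A} = E^s$ is the Galois closure of $E$, the group $G_{f_A}$ acts faithfully on the set of $d$ Galois conjugates of $\alpha_A$. This produces an embedding $G_{f_A} \hookrightarrow S_d$, so $2^g g! = |G_{f_A}|$ must divide $d!$. Since every proper divisor of $2g$ is at most $g$, and $g! < 2^g g!$, this forces $d = 2g$.

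With $[E:\Q] = 2g \geq 4$ in hand, the remaining assertions follow with little effort. Part (i) of Lemma \ref{q-number} rules out any archimedean embedding of $\alpha_A$ landing in $\R$ (which would force $[E:\Q] \leq 2$), so by part (ii), $E$ is a CM field with maximal totally real subfield $\Q(\alpha_A + q/\alpha_A)$. The Weyl property is immediate from $\op{Gal}(E^s/\Q) = G_{f_A} \cong W_{2g}$. And if $E/\Q$ were Galois, one would have $E = E^s$ and hence $[E:\Q] = |W_{2g}| = 2^g g!$, contradicting $[E:\Q] = 2g$ since $2^g g! > 2g$ for every $g \geq 2$.

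The whole argument is essentially bookkeeping; the only step requiring thought is the degree computation, and the main obstacle there is simply justifying the faithfulness of the action of $G_{f_A}$ on the set of Galois conjugates of $\alpha_A$. This rests on the identification $K_{f_A} = E^s$ which the paper has already built into its notation, so I anticipate no serious difficulty.
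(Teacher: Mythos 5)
Your proof is correct and follows essentially the same route as the paper: the degree computation via the bound $[K_{f_A}:\Q] \leq d!$ (you phrase this as Lagrange's theorem applied to $G_{f_A} \hookrightarrow S_d$, the paper directly writes the degree bound for the Galois closure, but it is the same fact), then Lemma \ref{q-number} to get the CM property once the degree $2g \geq 4$ rules out case (i), and finally the order comparison $2^g g! > 2g$ to conclude $E/\Q$ is non-Galois. No gaps.
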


\begin{proof}
Let $m := [\Q(\alpha_A):\Q]$. Since $\alpha_A$ is a root of $f_A$ and $f_A \in \Z[x]$ is monic of degree $2g$,
then $\alpha_A$ is an algebraic integer of degree $m$ where $m|2g$.
Suppose by contradiction that $m < 2g$. Since $m$ is a proper divisor of $2g$, we have $m \leq g$.
Hence the Galois closure $K_{f_A}=\Q(\alpha_A)^s$ has degree $[K_{f_A}:\Q] \leq m! \leq g!$. However, $\# W_{2g} = 2^{g} g!$,
which contradicts the assumption that $G_{f_A} \cong W_{2g}$. Thus $m=2g \geq 4$, hence it follows from Lemma \ref{q-number} that
$\Q(\alpha_A)$ is a CM field. Finally, since $2g \geq 4$, we conclude that $\Q(\alpha_A)$ is a
non-Galois Weyl CM field of degree $2g$.
\end{proof}
\vspace{0.10in}

\textbf{Proof of Theorem \ref{density}}. By Theorem \ref{Kdensity} and Proposition \ref{cmcharacterization}, if $g \geq 2$
then the proportion of isogeny classes $[A] \in A_g(p^n)$ for which $\Q(\alpha_A)$ is a non-Galois Weyl CM field approaches 1 as $n \rightarrow
\infty$. Theorem \ref{density} now follows from Theorem \ref{weyl}. \qed
\vspace{0.10in}

We next show that any CM field $E$ is isomorphic to a CM field of the form $\Q(\pi_A)$ for a simple abelian
variety $A/\mathbb{F}_q$.

The following result is a consequence of \cite[Theorems 1 and 2 (i)]{GO88}.

\begin{theorem}\label{gotheorem} Let $E$ be a CM field. Then for each integer $n \geq 2$,
there exists a prime number $p=p(E,n)$ such that $E=\Q(\pi_p)$ for some Weil $p^n$-number $\pi_p \in W(p^n)$.
\end{theorem}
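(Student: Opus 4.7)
The plan is to appeal to the construction of \cite{GO88}. I first translate the statement into ideal-theoretic terms: a Weil $p^n$-number $\pi \in E$ with $\Q(\pi) = E$ is an element $\pi \in \mathcal{O}_E$ satisfying $\pi \bar\pi = p^n$ (where $\bar\cdot$ is the unique nontrivial element of $\Gal(E/F)$, with $F$ the maximal totally real subfield of $E$) and lying in no proper CM subfield of $E$; proper real subfields are ruled out automatically since $\pi$ is totally imaginary.

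Next, I apply Chebotarev's density theorem. Let $H_E$ denote the Hilbert class field of $E$. The set of rational primes $p$ that split completely in the compositum $E^s \cdot H_E$ has positive Dirichlet density $1/[E^s H_E : \Q]$, so in particular is nonempty. For any such $p$, factor $(p)\mathcal{O}_E = \mathfrak{p}_1 \bar\mathfrak{p}_1 \cdots \mathfrak{p}_g \bar\mathfrak{p}_g$, with each $\mathfrak{p}_i$ of residue degree one and principal in $\mathcal{O}_E$. Fix $\mathfrak{p} := \mathfrak{p}_1$, choose a generator $\pi_0 \in \mathcal{O}_E$ of the principal ideal $\mathfrak{p}^n$, and observe that $\pi_0 \bar\pi_0$ generates $(p)^n$, so $\pi_0 \bar\pi_0 = u p^n$ for some unit $u \in \mathcal{O}_E^\times$ satisfying $N_{E/F}(u) = 1$. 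I would then invoke Hilbert 90 for the quadratic extension $E/F$ together with Hasse's analysis of the unit index $[\mathcal{O}_E^\times : W_E \mathcal{O}_F^\times] \in \{1, 2\}$, to write $u = \eta/\bar\eta$ for a suitable $\eta \in \mathcal{O}_E^\times$. Setting $\pi := \eta^{-1}\pi_0$ then gives an element of $\mathcal{O}_E$ with $\pi\bar\pi = p^n$ exactly.

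Primitivity $\Q(\pi) = E$ is essentially automatic from complete splitting. Indeed, if $\pi \in E' \subsetneq E$ for some proper CM subfield $E'$, then $(\pi)\mathcal{O}_E = \mathfrak{p}^n$ would be the extension of the ideal $(\pi)\mathcal{O}_{E'}$, forcing $\mathfrak{p}$ to be the unique prime of $E$ above $\mathfrak{p} \cap \mathcal{O}_{E'}$. Since $p$ splits completely in $E^s$, it splits completely in both $E$ and $E'$, so each prime of $E'$ above $p$ admits exactly $[E:E'] > 1$ distinct primes of $E$ lying above it, a contradiction.

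The main obstacle is the unit-theoretic step: when the Hasse index equals $2$, the unit $u$ need not be exactly of the form $\eta/\bar\eta$ with $\eta \in \mathcal{O}_E^\times$. One then either passes to a quadratic modification of $\pi_0$ while adjusting the splitting conditions to keep $\pi\bar\pi = p^n$, or imposes additional Chebotarev conditions on $p$ (for instance, demanding that $p$ split completely in a larger ray class field trivializing the relevant local obstructions) so that $u = 1$ can be arranged directly. This delicate unit-index analysis is precisely the content of \cite[Theorems 1 and 2(i)]{GO88}, which we invoke to conclude.
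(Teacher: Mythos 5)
The paper does not actually supply a proof of this theorem; it states it as a direct consequence of Theorems 1 and 2(i) of Greaves--Odoni and moves on. Your proposal is therefore an attempted reconstruction of that argument, and it contains a concrete error that breaks the computation except in the degree-$2$ case.

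The problem is the step from $\mathfrak{p} := \mathfrak{p}_1$ and $(\pi_0) = \mathfrak{p}^n$ to the claim that $\pi_0\bar\pi_0$ generates $(p)^n$. In fact $(\pi_0)(\bar\pi_0) = (\mathfrak{p}_1\bar\mathfrak{p}_1)^n = (\mathfrak{q}_1\mathcal{O}_E)^n$, where $\mathfrak{q}_1 = \mathfrak{p}_1 \cap \mathcal{O}_F$ is a single degree-one prime of the totally real subfield $F$ above $p$; the factors $\mathfrak{p}_2\bar\mathfrak{p}_2, \ldots, \mathfrak{p}_g\bar\mathfrak{p}_g$ do not appear. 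This equals $(p)^n$ only when $g = [F:\Q] = 1$, i.e.\ when $E$ is imaginary quadratic. For general $g$ one must instead take $\mathfrak{a} := \mathfrak{p}_1\mathfrak{p}_2\cdots\mathfrak{p}_g$ (one prime from each conjugate pair, i.e.\ a ``CM type at $p$''), so that $\mathfrak{a}\bar\mathfrak{a} = (p)\mathcal{O}_E$, and then seek $\pi_0$ generating $\mathfrak{a}^n$. This also affects your conclusion about the obstructing unit: from $\pi_0\bar\pi_0 = u p^n$ one gets $\bar u = u$ (apply complex conjugation to both sides), so $u$ is a totally positive unit of $\mathcal{O}_F$, not an element of norm $1$ in $E/F$; the Hilbert 90 argument as stated does not apply. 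The real question is whether $u$ lies in $N_{E/F}(\mathcal{O}_E^\times)$, which is where the Hasse unit index comes in, and is indeed the delicate part handled in Greaves--Odoni. Finally, the primitivity argument is also built on the wrong ideal: with the corrected $\mathfrak{a} = \mathfrak{p}_1\cdots\mathfrak{p}_g$, one must further argue that the chosen ``half'' of the primes above $p$ is not stable under the subgroup of $\Gal(E^s/\Q)$ cutting out any proper CM subfield, which requires some care (or a genericity argument via Chebotarev on Frobenius conjugacy classes), rather than being automatic.
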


Greaves and Odoni used the Chebotarev density theorem to deduce the following corollary.

\begin{corollary}\label{gocorollary}
There exists an integer $a(E,n) \geq 1$ such that
\begin{align*}
& \# \{p=p(E,n):~ 2 \leq p \leq X, ~ E=\Q(\pi_p), ~ \pi_p \in  W(p^n) \} = \\
& \qquad \qquad \frac{a(E,n)}{[H(E^s):\Q]}\mathrm{Li}(X) + O_{E,n}\left(X\exp\left(-c(E,n)\sqrt{\log(X)}\right)\right)
\end{align*}
as $X \rightarrow \infty$, where $H(E^s)$ denotes the Hilbert class field of
$E^s$, $\mathrm{Li}(X):=\int_{2}^{X}dt/\log(t)$, and $c(E,n) > 0$.
\end{corollary}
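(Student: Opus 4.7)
The plan is to deduce Corollary \ref{gocorollary} by combining the characterization of the primes $p$ with $E = \Q(\pi_p)$ (implicit in Theorem \ref{gotheorem} and the cited work of Greaves--Odoni) with an effective form of the Chebotarev density theorem applied to the extension $H(E^s)/\Q$. The reason the Hilbert class field $H(E^s)$ of the Galois closure $E^s$ should appear is standard in this circle of ideas: whether a rational prime $p$ admits a Weil $p^n$-number $\pi_p$ generating $E$ is governed not only by the splitting of $p$ in $E^s$ (which controls the existence of primes of $E^s$ of appropriate residue degree) but also by a principalization condition on the associated prime ideals, and that principalization condition is detected precisely by the Artin symbol in $H(E^s)/\Q$.

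First I would unpack Theorems 1 and 2(i) of Greaves--Odoni \cite{GO88} to extract the \emph{local} criterion they establish: there is a union $\mathcal{C}(E,n) \subset \Gal(H(E^s)/\Q)$ of conjugacy classes such that, for all unramified primes $p$ outside a finite exceptional set, one has $E \cong \Q(\pi_p)$ for some $\pi_p \in W(p^n)$ if and only if the Frobenius class $\op{Frob}_p \subset \Gal(H(E^s)/\Q)$ lies in $\mathcal{C}(E,n)$. I would set $a(E,n) := \#\mathcal{C}(E,n)$, so that $a(E,n) \geq 1$ by Theorem \ref{gotheorem} applied to at least one such $p$. The hard part here is just bookkeeping the two conditions (the existence of a degree-$n$ prime of $E^s$ above $p$ whose $n$-th power is principal in $\mathcal{O}_{E^s}$, generated by an element satisfying the Weil length condition), and verifying that they are jointly encoded by the Artin symbol in $H(E^s)/\Q$.

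Next I would apply the effective Chebotarev density theorem of Lagarias--Odlyzko (unconditional, i.e.\ without GRH) to the Galois extension $H(E^s)/\Q$ and the union of classes $\mathcal{C}(E,n)$. This yields
\begin{align*}
\#\{p \leq X : \op{Frob}_p \in \mathcal{C}(E,n)\} = \frac{a(E,n)}{[H(E^s):\Q]}\,\op{Li}(X) + O_{E,n}\!\left(X\exp\!\left(-c(E,n)\sqrt{\log X}\right)\right)
\end{align*}
for some constant $c(E,n) > 0$ depending on $E$ and $n$ (through the field $H(E^s)$ and its discriminant). The finitely many ramified or exceptional primes contribute to the error term trivially, and the conjugacy-class condition is exactly the Greaves--Odoni criterion from the previous step, so this proves the asymptotic stated in the corollary.

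The main obstacle, as I see it, is the first step: isolating from \cite{GO88} the precise translation of the condition ``$E = \Q(\pi_p)$ for some $\pi_p \in W(p^n)$'' into a Frobenius condition in $\Gal(H(E^s)/\Q)$. Once this translation is in hand, the density statement is just the effective Chebotarev theorem applied to a fixed number field, and the error term $X\exp(-c\sqrt{\log X})$ arises in the standard way from the Lagarias--Odlyzko bounds. The positivity $a(E,n) \geq 1$ is forced by Theorem \ref{gotheorem}, which guarantees the existence of at least one prime $p = p(E,n)$ realizing the condition, and hence at least one Frobenius class in $\mathcal{C}(E,n)$.
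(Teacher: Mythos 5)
The paper gives no proof of this corollary: it is simply attributed to Greaves--Odoni, with the remark that they ``used the Chebotarev density theorem to deduce'' it. Your reconstruction --- translating the Greaves--Odoni criterion from \cite{GO88} into a Frobenius condition in $\Gal(H(E^s)/\Q)$ and then applying unconditional effective Chebotarev --- matches the approach the paper attributes to them, and the shape of the main term $a(E,n)/[H(E^s):\Q]\cdot\mathrm{Li}(X)$ and of the error term $X\exp(-c\sqrt{\log X})$ are both exactly what that method produces, so your outline is correct and essentially the same route.
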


We have the following corollary.

\begin{corollary}\label{keycor}
Let $E$ be a CM field. Then for each integer $n \geq 2$, there is a set of prime numbers $p=p(E,n)$
with positive natural density such that $E \cong \Q(\pi_A)$ for some simple abelian variety $A/\mathbb{F}_{p^n}$.
\end{corollary}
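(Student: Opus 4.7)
The proof plan is a direct combination of the two results immediately preceding the corollary with the Honda--Tate theorem. Fix a CM field $E$ and an integer $n \geq 2$. First I would invoke Corollary \ref{gocorollary}, which gives the asymptotic
\begin{align*}
N_{E,n}(X) := \# \{p \leq X \suchthat E = \Q(\pi_p) \text{ for some } \pi_p \in W(p^n) \} \sim \frac{a(E,n)}{[H(E^s):\Q]}\,\mathrm{Li}(X),
\end{align*}
with $a(E,n) \geq 1$. Comparing this asymptotic to the prime number theorem $\pi(X) \sim \mathrm{Li}(X)$, the set
\begin{align*}
\mathcal{P}(E,n) := \{p \text{ prime} \suchthat E = \Q(\pi_p) \text{ for some } \pi_p \in W(p^n)\}
\end{align*}
has natural density $a(E,n)/[H(E^s):\Q] > 0$.

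Next, for each $p \in \mathcal{P}(E,n)$, the Honda--Tate theorem (recalled in the excerpt just after Lemma \ref{q-number}) furnishes a bijection between conjugacy classes of Weil $q$-numbers and isogeny classes of simple abelian varieties over $\mathbb{F}_q$, where $q = p^n$. Applying this bijection to the conjugacy class of $\pi_p$ produces a simple abelian variety $A = A_p/\mathbb{F}_{p^n}$ together with an identification $\Q(\pi_A) \cong \Q(\pi_p) = E$, as desired. Taking the union over $p \in \mathcal{P}(E,n)$ yields the required positive-density set of primes.

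The argument is essentially a short concatenation: there is no significant obstacle, since Theorem \ref{gotheorem}, Corollary \ref{gocorollary}, and Honda--Tate together do all of the real work. The only mild subtlety is confirming that the density statement is correctly normalized, i.e., that ``positive natural density'' among primes corresponds to the ratio of $\mathrm{Li}(X)$-asymptotics, which follows immediately from the prime number theorem.
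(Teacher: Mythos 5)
Your proof is correct and follows essentially the same route as the paper: extract positive density from Corollary \ref{gocorollary}, then apply the Honda--Tate bijection to produce the simple abelian variety $A/\mathbb{F}_{p^n}$ with $\Q(\pi_A) \cong \Q(\pi_p) = E$. The only difference is that you spell out the comparison to $\mathrm{Li}(X)$ via the prime number theorem, which the paper leaves implicit.
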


\begin{proof} Let $E$ be a CM field. Then by Corollary \ref{gocorollary}, for each integer $n \geq 2$
there is a set of prime numbers $p=p(E,n)$ with positive natural density
such that $E=\Q(\pi_p)$ for some $\pi_p \in W(p^n)$. On the other hand, by the Honda-Tate theorem,
for each such prime number $p$, there exists a simple abelian variety $A/\mathbb{F}_{p^n}$ such that $\pi_A \sim \pi_p$. Therefore,
$\Q(\pi_A) \cong \Q(\pi_p)=E$.
\end{proof}

Given Corollary \ref{keycor}, it is natural to ask whether a density result analogous to Theorem \ref{Kdensity}
holds for simple abelian varieties. Define the sets
\begin{align*}
\mathcal{A}_g^s(q)&:=\{\textrm{isogeny classes of simple abelian varieties $A/\mathbb{F}_{q}$ with $\textrm{dim}(A)=g$}\},\\
\mathcal{B}_g^s(q)&:=\{\textrm{isogeny classes of simple abelian varieties $A/\mathbb{F}_{q}$ with $\textrm{dim}(A)=g$ and $G_{f_A} \cong W_{2g}$}\}.
\end{align*}
It seems likely that a modification of the methods in \cite{Kow06} can be used to prove that
\begin{align*}
\lim_{n \rightarrow \infty}\frac{\# \mathcal{B}_g^{s}(p^n)}{\# \mathcal{A}_g^{s}(p^n)} = 1.
\end{align*}
If true, then arguing as in the proof of Theorem \ref{density}, it would follow that if $g \geq 2$,
then the proportion of isogeny classes  $[A] \in \mathcal{A}_g^s(p^n)$
for which $\Q(\pi_A)$ is a CM field that satisfies the Colmez conjecture approaches 1 as $n \rightarrow \infty$.

\section{Acknowledgments}

The authors were partially supported by the NSF grants DMS-1162535 and DMS-1460766 during the preparation of this work. A. B-S. was also
partially supported by the University of Costa Rica.

\end{document}